\documentclass[11pt,reqno,twoside]{article}

\usepackage{amssymb,amsmath,amsthm,amsbsy,mathrsfs,color}
\usepackage{enumitem,booktabs,algorithm2e,algorithmic}
\usepackage{caption,subcaption,float}
\usepackage{graphicx}
\usepackage[normalem]{ulem}
\newcommand{\stkout}[1]{\ifmmode\text{\sout{\ensuremath{#1}}}\else\sout{#1}\fi}
\usepackage[colorlinks=true]{hyperref}
\usepackage{tgtermes}
\usepackage{fullpage}

\newtheorem{theorem}{Theorem}[section]
\newtheorem{corollary}[theorem]{Corollary}
\newtheorem{lemma}[theorem]{Lemma}
\newtheorem{proposition}[theorem]{Proposition}
\newtheorem{definition}[theorem]{Definition}
\newtheorem{remark}[theorem]{Remark}

\newcommand{\R}{{\mathbb R}}
\newcommand{\N}{{\mathbb N}}
\DeclareMathOperator{\argmin}{argmin}

\newcommand{\cC}{{\mathcal C}}
\newcommand{\cD}{{\mathcal D}}
\newcommand{\cE}{{\mathcal E}}
\newcommand{\cF}{{\mathcal F}}
\newcommand{\cG}{{\mathcal G}}
\newcommand{\cH}{{\mathcal H}}
\newcommand{\cX}{{\mathcal X}}
\newcommand{\cY}{{\mathcal Y}}
\newcommand{\cK}{{\mathcal Y}}
\newcommand{\cL}{{\mathcal L}}

\newcommand{\cO}{{\mathcal O}}
\newcommand{\cP}{{\mathcal P}}

\newcommand{\cW}{{\mathcal W}}
\newcommand{\cZ}{\mathcal Z}
\newcommand{\sS}{{\mathscr S}}

\renewcommand{\top}{*}
\newcommand{\transp}{\mathrm{T}}

\newcommand{\demi}{\frac{1}{2}}
\newcommand{\ie}{{\it i.e.}\,\,}
\newcommand{\eg}{{\it e.g.}\,\,}

\newcommand{\st}{\text{~subject to~}}

\newcommand{\eqdef}{:=}

\newcommand{\dom}{\mathrm{dom}}

\newcommand{\dotp}[2]{\langle #1,\,#2 \rangle}
\newcommand{\norm}[1]{\left\|{#1}\right\|}
\newcommand{\anorm}[1]{\|{#1}\|}

\newcommand{\abs}[1]{\left |{#1}\right |}
\newcommand{\pa}[1]{\left({#1}\right)}
\newcommand{\bpa}[1]{\Big({#1}\Big)}
\newcommand{\brac}[1]{\left[{#1}\right]}
\newcommand{\bra}[1]{\left\lbrace{#1}\right\rbrace}

\newcommand{\qandq}{ \enskip \text{and} \enskip }

\newcommand{\tcb}[1]{{\color{blue}{#1}}}
\newcommand{\tcr}[1]{{\color{red}{#1}}}

\begin{document}

\title{Fast convergence of dynamical ADMM via time scaling of damped inertial dynamics}

\author{Hedy Attouch\thanks{IMAG, Univ. Montpellier, CNRS, Montpellier, France.  E-mail: 
\url{hedy.attouch@umontpellier.fr}.} 
\and Zaki Chbani\thanks{Cadi Ayyad Univ., Faculty of Sciences Semlalia, Mathematics, 40000 Marrakech, Morroco. E-mail: \url{chbaniz@uca.ac.ma}.}
\and Jalal Fadili\thanks{Normandie Universit\'e, ENSICAEN, UNICAEN, CNRS, GREYC, France.  E-mail: \url{Jalal.Fadili@greyc.ensicaen.fr}.}
\and Hassan Riahi\thanks{Cadi Ayyad Univ., Faculty of Sciences Semlalia, Mathematics, 40000 Marrakech, Morroco. E-mail: \url{h-riahi@uca.ac.ma}.}
}

\date{}

\maketitle

\begin{abstract}
In this paper, we propose in a Hilbertian setting a second-order time-continuous dynamic system with fast convergence guarantees to solve structured convex minimization problems with an affine constraint. The system is associated with the augmented Lagrangian formulation of the minimization problem. The corresponding dynamics brings into play three general time-varying parameters, each with specific properties, and which are respectively associated with viscous damping, extrapolation and temporal scaling. By appropriately adjusting these parameters, we develop a Lyapunov analysis which provides fast convergence properties of the values and of the feasibility gap. These results will naturally pave the way for developing corresponding accelerated ADMM algorithms, obtained by temporal discretization.
\end{abstract}

\noindent \textbf{Keywords:~}
Augmented Lagrangian; ADMM; damped inertial dynamics; convex constrained minimization; convergence rates; Lyapunov analysis; Nesterov accelerated gradient method; temporal scaling.

\bigskip

\noindent \textbf{AMS subject classification} 37N40, 46N10, 49M30, 65B99, 65K05, 65K10, 90B50, 90C25.


\section{Introduction}\label{sec:prel} 
Our paper is part of the active research stream that studies the relationship between continuous-time dissipative dynamical systems and optimization algorithms.
From this perspective, damped inertial dynamics offer a natural way to accelerate these systems.
An abundant literature has been devoted to the design of the damping terms, which is the basic ingredient of the optimization properties of these dynamics. 
In line with the seminal work of Polyak on the heavy ball method with friction \cite{Polyak1,Polyak2}, the first studies have focused on the case of a fixed viscous damping coefficient \cite{Alv0,AGR,AABR}.
A decisive step was taken in \cite{SBC} where the authors considered inertial dynamics with an asymptotic vanishing viscous damping coefficient. In doing so, they made the link with the accelerated gradient method of Nesterov \cite{Nest1,Nest4,BT} for unconstrained convex minimization. This has resulted in a flurry of research activity; see \eg \cite{AC1,AC2,AC2R-EECT,ACR-rescale,ACPR,ACR-subcrit,AP,APR,APR2,AD,AAD,Bot-Cest2,CD,May,SDJS,WRJ}.
 
In this paper, we consider the case of affinely \textit{constrained} convex structured minimization problems. To bring back the problem to the unconstrained case, there are two main ways: either penalize the constraint (by external penalization or an internal barrier method), or use (augmented) Lagrangian multiplier methods. 

Accounting for approximation/penalization terms within dynamical systems has been considered in a series of papers; see  \cite{ACP,BCL} and the references therein. It is a flexible approach which can be applied to non-convex problems and/or ill-posed problems, making it a valuable tool for inverse problems. Its major drawback is that in general, it requires a subtle tuning of the approximation/penalization parameter.

Here, we will consider the augmented Lagrangian approach and study the convergence properties of a second-order inertial dynamic with damping, which is attached to the augmented Lagrangian formulation of the affinely constrained convex minimization problem. The proposed dynamical system can be viewed as an inertial continuous-time counterpart of the ADMM method originally proposed in the mid-1970s and which has gained considerable interest in the recent years, in particular for solving large-scale composite optimization problems arising in data science. Among the novelties of our work, the dynamics we propose involves three parameters which vary in time. These are associated with viscous damping, extrapolation, and temporal scaling. By properly adjusting these parameters, we will provide fast convergence rates both for the values and the feasibility gap. The balance between the viscosity parameter (which tends towards zero) and the extrapolation parameter (which tends towards infinity) has already been developed in \cite{ZLC}, \cite{HHF} and \cite{ACR-Optimization-2020}, though for different problems. Temporal scaling techniques were considered in \cite{ABCR} for the case of convex minimization without affine constraint; see also \cite{ACFR,ACR-rescale,ACR-Pafa-2020}. Thus, another key contribution of this paper is to show that the temporal scaling and extrapolation can be extended to the class of ADMM-type methods with improved convergence rates. Working with general coefficients and in general Hilbert spaces allows us to encompass the results obtained in the above-mentioned papers and to broaden their scope.

\medskip 

It has been known for a long time that the optimality conditions of the (augmented) Lagrangian formulation of convex structured minimization problems with an affine constraint can be equivalently formulated as a monotone inclusion problem; see \cite{Rock1,Rock2,Rock3}. In turn, the problem can be converted into finding the zeros of a maximally monotone operator, and can therefore be attacked using inertial methods for solving monotone inclusions. In this regard, let us mention the following recent works concerning the acceleration of ADMM methods via continuous-time inertial dynamics:
\begin{enumerate}[label=$\bullet$]
\item In \cite{Bot-Cest5}, the authors proposed an inertial ADMM by making use of the inertial version of the Douglas-Rachford splitting method for monotone inclusion problems recently introduced in \cite{BCH}, in the context of concomitantly solving a convex minimization problem and its Fenchel dual; see also \cite{Goldstein,PSB,PJ,PoonLiang} in the purely discrete setting.

\item Attouch \cite{Att1} uses the maximally monotone operator which is associated with the augmented Lagrangian formulation of the problem, and specializes to this operator the inertial proximal point algorithm recently developed in \cite{AP-max} to solve general monotone inclusions. This gives rise to an inertial proximal ADMM algorithm where an appropriate adjustment of the viscosity and proximal parameters gives provably fast convergence
properties, as well as the convergence of the iterates to saddle points of the Lagrangian function. This approach is in line with \cite{AS} who considered the case without inertia. But this approach fails to achieve a fully split inertial ADMM algorithm.
\end{enumerate}

\paragraph{Contents}  
In Section~\ref{sec:dynamic_formulation}, we introduce the inertial second-order dynamical system with damping (coined \eqref{eq:trials}) which is attached to the augmented Lagrangian formulation.
In Section~\ref{sec:Lyap}, which is the main part of the paper, we develop a Lyapunov analysis to establish the asymptotic convergence properties of \eqref{eq:trials}. This gives rise to a system of inequalities-equalities which must be satisfied by the parameters of the dynamics. 
From the energy estimates thus obtained, we show in Section~\ref{Cauchy-problem} that the Cauchy problem attached to \eqref{eq:trials} is well-posed, \ie existence and possibly uniqueness of a global solution. In Section~\ref{sec:strongly_convex}, we examine the case of the uniformly convex objectives. In Section~\ref{sec:particular}, we provide specific choices of the system parameters that satisfy our assumptions and achieve fast convergence rates. This is then supplemented by preliminary numerical illustrations. Some conclusions and perspectives are finally outlined in Section~\ref{sec:conclusion}.

\section{Problem statement}\label{sec:dynamic_formulation}

Consider the structured convex optimization problem:
\begin{equation}\tag{$\cP$}\label{eq:P}
\min_{x\in\cX, \; y\in\cY} F(x,y) \eqdef f(x) + g(y) \quad \text{ subject to } Ax + By = c,
\end{equation}
where, throughout the paper, we make the following standing assumptions:
\begin{equation}\tag{$\cH_{\cP}$}\label{eq:HP}
\hspace*{-0.5cm}
\begin{cases}
\begin{tabular}{l}
$\cX,\cY,\cZ$ are real Hilbert spaces; \\
$f: \cX \rightarrow \R, \;  g: \cY \rightarrow \R$  are convex functions of class $\mathcal C^1$; \\
$A: \cX \to \cZ, B: \cY \to \cZ$ are linear continuous operators,  $c \in \cZ$;\\
The solution set of \eqref{eq:P} is non-empty.
\end{tabular}
\end{cases}
\end{equation}
Throughout, we denote by $\dotp{\cdot}{\cdot}$ and $\norm{\cdot}$ the scalar product and corresponding norm associated to any of $\cX,\cY,\cZ$, and the underlying space is to be understood from the context.

\subsection{Augmented Lagrangian formulation}
Classically, \eqref{eq:P} can be equivalently reformulated as the saddle point problem
\begin{equation}\label{eq:minmax}
\min_{(x,y)\in \cX\times \cK} \max_{\lambda\in \cZ} \cL(x, y, \lambda),
\end{equation}
where $\cL : \cX \times  \cK \times  \cZ  \rightarrow \R$ is the Lagrangian associated with \eqref{eq:minmax}
\begin{equation}\label{eq:Lag}
\cL(x, y, \lambda) \eqdef F(x,y) +\langle\lambda , Ax+By-c\rangle .
\end{equation}
Under our standing assumption~\eqref{eq:HP}, $\cL$ is convex with respect to $(x,y)\in \cX \times\cY$, and affine (and hence concave) with respect to $\lambda \in \cZ$. A pair $(x^\star,y^\star)$ is optimal for \eqref{eq:P}, and $\lambda^\star$ is a corresponding Lagrange multiplier if and only if $(x^\star, y^\star, \lambda^\star)$ is a saddle point of the Lagrangian function $\cL$, \ie for every $(x,y,\lambda) \in \cX \times \cY \times \cZ$,
\begin{equation}\label{eq:saddlepoint}
\cL(x^\star,y^\star,\lambda) \leq \cL(x^\star,y^\star,\lambda^\star) \leq \cL(x,y,\lambda^\star).
\end{equation}
We denote by $\sS$ the set of saddle points of $\cL$. The corresponding optimality conditions read
\begin{equation}\label{opt_system}
(x^\star, y^\star, \lambda^\star)\in \sS 
\Longleftrightarrow 
\begin{cases}
\nabla_x \cL(x^\star,y^\star,\lambda^\star)=0 \\
\nabla_y \cL(x^\star,y^\star,\lambda^\star)=0 \\
\nabla_\lambda \cL(x^\star,y^\star,\lambda^\star)=0
\end{cases}
\Longleftrightarrow
\begin{cases}
\nabla f(x^\star)+A^\top\lambda^\star=0 \\  
\nabla g(y^\star)+B^\top\lambda^\star=0 \\
Ax^\star+By^\star-c=0 
\end{cases},
\end{equation}
where we use the classical  notations: $\nabla f$ and $\nabla g$ are the gradients of $f$ and $g$, $A^\top$ is the adjoint operator of $A$, and similarly for $B$. The operator $\nabla_z$ is the gradient of the corresponding multivariable function with respect to variable $z$. Given $\mu > 0$, the augmented Lagrangian $\cL_\mu  : \cX \times \cK \times  \cZ  \rightarrow \R $ associated with the problem \eqref{eq:P}, is defined by
\begin{equation}\label{eq:auglag}
\cL_\mu(x, y, \lambda) \eqdef \cL(x, y, \lambda) + \frac\mu2 \|Ax + By - c\|^2.
\end{equation}
Observe that one still has 
$
(x^\star, y^\star, \lambda^\star)\in \sS 
\Longleftrightarrow 
\begin{cases}
\nabla_x \cL_\mu(x^\star,y^\star,\lambda^\star)=0, \\
\nabla_y \cL_\mu(x^\star,y^\star,\lambda^\star)=0, \\
\nabla_\lambda \cL_\mu(x^\star,y^\star,\lambda^\star)=0.
\end{cases}
$

\subsection{The inertial system \eqref{eq:trials}}

We will study the asymptotic behaviour, as $t \to +\infty$, of the inertial system:

\medskip

\boxed{
\begin{array}{rcl} \medskip
&& \text{\eqref{eq:trials}: Temporally Rescaled Inertial Augmented Lagrangian System.} \\
\hline \\
&&
\begin{cases}
\ddot x(t)+\gamma(t)\dot x(t) + b(t)\nabla_x\cL_\mu \Big(x(t),y(t),\lambda(t)+\alpha(t) \dot\lambda(t)\Big) &=0 \vspace{1mm} \\
\ddot y(t)+\gamma(t)\dot y(t) + b(t)\nabla_y\cL_\mu \Big(x(t),y(t),\lambda(t)+\alpha(t) \dot\lambda (t)\Big) &=0 \vspace{1mm}   \\
\ddot \lambda(t)+\gamma(t)\dot \lambda(t) - b(t)\nabla_\lambda \cL_\mu \Big(x(t)+\alpha(t) \dot x(t),y(t)+\alpha(t) \dot y(t),\lambda(t)\Big)&=0,\vspace{2mm} 
\end{cases}
\end{array}
}

\medskip

\noindent for $t \in [t_0,+\infty[$ with initial conditions $(x(t_0),y(t_0),\lambda(t_0))$ and $(\dot x(t_0),\dot y(t_0), \dot \lambda(t_0))$. The parameters of \eqref{eq:trials} play the following roles:
\begin{enumerate}[label=$\bullet$]
\item $\gamma(t)$ is a viscous damping parameter, 
\item $\alpha(t)$ is an extrapolation parameter,
\item $b(t)$ is attached to the temporal scaling of the dynamic.
\end{enumerate}
In the sequel, we make the following standing assumption on these parameters:
\begin{equation}\tag{$\cH_{\cD}$}\label{eq:HD}
\hspace*{-10pt}
\gamma, \alpha,  b: [t_0, +\infty[ \to \R^+ \text{ are non-negative continuously differentiable functions}.
\end{equation}

\medskip

Plugging the expression of the partial gradients of $\cL_\mu$ into the above system, the Cauchy problem associated with \eqref{eq:trials} is written as follows, where we unambiguously remove the dependence of $(x,y,\lambda)$ on $t$ to lighten the formula,
\begin{equation}\tag{\rm{TRIALS}}
\hspace*{-0.5cm}
\begin{cases}
\ddot x+\gamma (t) \dot x + b(t) \bpa{\nabla f (x) +
A^\top \brac{\lambda + \alpha(t) \dot\lambda 
+ \mu (Ax+By-c)}} &=0 \\
\ddot y+\gamma (t)\dot y + b(t)\bpa{\nabla g (y) +
B^\top \brac{\lambda + \alpha(t) \dot\lambda 
+ \mu (Ax+By-c)}}  &=0 \\
\ddot \lambda+\gamma (t)\dot \lambda - b(t)
\bpa{A(x + \alpha(t)\dot x) + B(y + \alpha(t)\dot y) -c} &= 0 \\
(x(t_0),y(t_0),\lambda(t_0)) = (x_0,y_0,\lambda_0) \qandq \\
(\dot x(t_0),\dot y(t_0),\dot \lambda(t_0)) = (u_0,v_0,\nu_0) .
\end{cases}\label{eq:trials}
\end{equation}

If in addition to \eqref{eq:HP}, the gradients of $f$ and $g$ are Lipschitz continuous on bounded sets, we will show later in Section~\ref{Cauchy-problem} that the Cauchy problem associated with \eqref{eq:trials} has a unique global solution on $[t_0,+\infty[$. Indeed, although the existence and uniqueness of a local solution follows from the standard non-autonomous Cauchy-Lipschitz theorem, the global existence necessitates the energy estimates derived from the Lyapunov analysis in the next section. The centrality of these estimates is the reason why the proof of well-posedness is deferred to Section~\ref{Cauchy-problem}. Thus, for the moment we take for granted the existence of classical solutions to \eqref{eq:trials}.

\subsection{A fast convergence result}
Our Lyapunov analysis will allow us to establish convergence results and rates under very general conditions on the parameters of \eqref{eq:trials}, see Section~\ref{sec:Lyap}. In fact, there are many situations of practical interest where such conditions are easily verified, and which will be discussed in detail in Section~\ref{sec:particular}. Thus for the sake of illustration and reader convenience, here we describe an important situation where convergence occurs with the fast rate $\cO(1/t^2)$.

\begin{theorem} \label{thm:O-1/t2}
Suppose that the coefficients of \eqref{eq:trials} satisfy
\[
\alpha(t)= \alpha_{0} t   \mbox{ with } \alpha_{0} >0, \; \gamma(t) =  \frac{\eta+\alpha_0}{\alpha_0 t}, \; b(t)= t^{\frac{1}{\alpha_0}-2} ,
\]
where $\eta > 1$. Suppose that the set of saddle points $\sS$ is non-empty and let $(x^\star,y^\star,\lambda^\star) \in \sS$. Then, for any  solution trajectory $(x(\cdot),y(\cdot),\lambda(\cdot))$ of \eqref{eq:trials}, the trajectory remains bounded, and we have the following convergence rates:
\begin{align*}
\cL(x(t),y(t),\lambda^\star) - \cL(x^\star,y^\star,\lambda^\star) &= \cO\pa{\frac{1}{t^{\frac{1}{\alpha_0}}}}, \\
\norm{Ax(t)+By(t)-c}^2 &= \cO\pa{\frac{1}{t^{\frac{1}{\alpha_0}}}} , \\
-\frac{C_1}{t^{\frac{1}{2\alpha_0}}} \leq F( x(t),y(t))-F(x^\star,y^\star) &\leq \frac{C_2}{t^{\frac{1}{\alpha_0}}} , \\
\anorm{(\dot {x}(t), \dot {y}(t), \dot{\lambda}(t)} &= \cO\pa{\dfrac1{t}} .
\end{align*}
where $C_1$ and $C_2$ are positive constants. 

In particular, for $\alpha_0 = \demi$, \ie no time scaling $b \equiv 1$, we have
\begin{align*}
\cL(x(t),y(t),\lambda^\star) - \cL(x^\star,y^\star,\lambda^\star) &= \cO\pa{\frac{1}{t^{2}}}, \\
\norm{Ax(t)+By(t)-c}^2 &= \cO\pa{\frac{1}{t^{2}}} , \\
-\frac{C_1}{t} \leq F( x(t),y(t))-F(x^\star,y^\star) &\leq \frac{C_2}{t^{2}} .
\end{align*}
\end{theorem}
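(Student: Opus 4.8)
The plan is to run a Lyapunov analysis tailored to the saddle-point structure and to the specific parameter choices. Write $Z=(x,y,\lambda)$, $Z^\star=(x^\star,y^\star,\lambda^\star)$, let $r(t)\eqdef Ax(t)+By(t)-c$ be the feasibility residual, and set $\mathcal G(t)\eqdef \cL(x(t),y(t),\lambda^\star)-\cL(x^\star,y^\star,\lambda^\star)\geq 0$. Since $Ax^\star+By^\star=c$, one has $\cL_\mu(x,y,\lambda^\star)-\cL(x^\star,y^\star,\lambda^\star)=\mathcal G(t)+\tfrac{\mu}{2}\norm{r(t)}^2\eqdef \mathcal G_\mu(t)$. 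With $\beta(t)\eqdef t^2b(t)=t^{1/\alpha_0}$ and the extrapolated states $\widetilde z\eqdef z+\alpha(t)\dot z$ (for $z\in\{x,y,\lambda\}$), I would introduce the energy
\[
\mathcal E(t)\eqdef \alpha_0^2\,\beta(t)\,\mathcal G_\mu(t)+\frac12\sum_{z\in\{x,y,\lambda\}}\norm{\widetilde z(t)-z^\star}^2+\frac{\eta-1}{2}\norm{Z(t)-Z^\star}^2,
\]
each summand being nonnegative. The guiding principle is that $\beta=t^2b$ is exactly the weight for which $\beta\,\mathcal G_\mu$ can stay bounded, and that the \emph{same} extrapolation $\alpha(t)$ used in the anchor matches the extrapolated arguments appearing inside the gradients of \eqref{eq:trials}.

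The core computation is to differentiate $\mathcal E$ and substitute $\ddot z=-\gamma\dot z-b\,G_z$, where $G_x,G_y$ are the partial gradients of $\cL_\mu$ at the extrapolated multiplier $\lambda+\alpha\dot\lambda$ and $G_\lambda$ is minus the constraint map at the extrapolated primal states. Differentiating the anchor produces the factor $1+\dot\alpha-\alpha\gamma$ in front of $\dotp{\widetilde z-z^\star}{\dot z}$; the choice $\gamma=(\eta+\alpha_0)/(\alpha_0 t)$ makes this factor equal to the negative constant $1-\eta$. The remaining term $-\alpha(t)b(t)\sum_z\dotp{\widetilde z-z^\star}{G_z}$ is the heart of the argument. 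There I would use, first, convexity of $f,g$ to rewrite $\dotp{\widetilde x-x^\star}{\nabla f(x)}$ as $f(x)-f(x^\star)+\alpha\tfrac{d}{dt}f(x)$ (and likewise for $g$); and second, the skew-symmetric primal--dual coupling, whereby the matched extrapolation makes the term $\dotp{\widetilde\lambda}{\widetilde r}$ cancel and leaves only $\mu\dotp{\widetilde r}{r}+\dotp{\lambda^\star}{\widetilde r}$, with $\widetilde r=r+\alpha\dot r$. Collecting the $\alpha\tfrac{d}{dt}(\cdot)$ pieces into one derivative gives
\[
\sum_{z\in\{x,y,\lambda\}}\dotp{\widetilde z-z^\star}{G_z}\;\geq\;\mathcal G_\mu(t)+\frac{\mu}{2}\norm{r(t)}^2+\alpha(t)\,\dot{\mathcal G}_\mu(t).
\]
Using $\alpha b=\alpha_0^2\dot\beta$ and $\alpha^2 b=\alpha_0^2\beta$ (which hold precisely for $b(t)=t^{1/\alpha_0-2}$), the scaled-gap terms telescope against $\tfrac{d}{dt}(\beta\mathcal G_\mu)$ and the distance terms cancel, leaving $\dot{\mathcal E}(t)\leq(1-\eta)\alpha(t)\norm{\dot Z(t)}^2-\tfrac{\mu\alpha_0^2}{2}\dot\beta(t)\norm{r(t)}^2\leq 0$ since $\eta>1$. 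Hence $\mathcal E$ is non-increasing and $\mathcal E(t)\leq\mathcal E(t_0)\eqdef C$.

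From $\mathcal E(t)\leq C$ all but one estimate follow at once: $\alpha_0^2\beta\mathcal G_\mu\leq C$ gives $\mathcal G(t)\leq\mathcal G_\mu(t)=\cO(t^{-1/\alpha_0})$ (Lagrangian rate) and $\tfrac{\mu}{2}\norm{r}^2\leq\mathcal G_\mu(t)=\cO(t^{-1/\alpha_0})$ (feasibility rate); the term $\tfrac{\eta-1}{2}\norm{Z-Z^\star}^2\leq C$ gives boundedness of the trajectory, which together with boundedness of the anchor forces $\norm{\alpha_0 t\,\dot z}$ to stay bounded, i.e.\ $\norm{\dot Z(t)}=\cO(1/t)$. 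For the objective, convexity of $F$ at $(x^\star,y^\star)$ with $\nabla f(x^\star)=-A^\top\lambda^\star$, $\nabla g(y^\star)=-B^\top\lambda^\star$ yields $F(x,y)-F(x^\star,y^\star)\geq-\dotp{\lambda^\star}{r}\geq-\norm{\lambda^\star}\norm{r}$, which is the lower bound $-C_1 t^{-1/(2\alpha_0)}$. Specializing to $\alpha_0=\demi$ (so $b\equiv 1$, $\gamma=(2\eta+1)/t$ with $2\eta+1>3$) recovers the $\cO(1/t^2)$ statements.

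The main obstacle is the objective \emph{upper} bound $F-F^\star\leq C_2\,t^{-1/\alpha_0}$. The exact identity $F(x,y)-F^\star=\mathcal G(t)-\dotp{\lambda^\star}{r(t)}$ together with the crude estimate $|\dotp{\lambda^\star}{r}|\leq\norm{\lambda^\star}\norm{r}=\cO(t^{-1/(2\alpha_0)})$ only yields the slower rate $\cO(t^{-1/(2\alpha_0)})$; and inserting the expression of $\nabla f(x)$ from \eqref{eq:trials} into the convexity upper bound $F-F^\star\leq\dotp{\nabla f(x)}{x-x^\star}+\dotp{\nabla g(y)}{y-y^\star}$ runs into a term $\tfrac1b\dotp{\ddot x}{x-x^\star}$ that fails to decay. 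Reaching $\cO(t^{-1/\alpha_0})$ therefore demands controlling the pairing $\dotp{\lambda^\star}{r(t)}$ itself at the faster order, not merely $\norm{r(t)}$. I expect this to be the delicate step, exploiting the stronger integral estimate $\int_{t_0}^{\infty}\dot\beta(t)\norm{r(t)}^2\,dt<\infty$ furnished by the strict decrease of $\mathcal E$ (and, if required, convergence of the trajectory) rather than the pointwise feasibility rate alone.
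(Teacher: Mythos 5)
Your energy is exactly the paper's Lyapunov function $\cE$ specialized to $\sigma \equiv 1$ (so that $\delta = \alpha = \alpha_0 t$ and $\xi = \gamma\alpha - \dot\alpha - 1 = \eta - 1$), and your decrease computation, the Lagrangian and feasibility rates, the boundedness of the trajectory, the velocity rate $\cO(1/t)$, and the objective lower bound all coincide with the paper's argument (Theorem~\ref{ACFR,rescale} and Theorem~\ref{ACFR_rescale_boundedness}, specialized as in Proposition~\ref{O-1/t2}). The genuine gap is the one you flag yourself: the upper bound $F(x(t),y(t)) - F^\star \leq C_2\, t^{-1/\alpha_0}$ is not proved, and neither of the routes you sketch can deliver it. The integral estimate $\int_{t_0}^{\infty}\dot\beta(t)\norm{r(t)}^2\,dt < +\infty$ controls $\norm{r}$ only in an averaged sense: it is compatible with $\norm{r(t)}^2$ being of order $t^{-1/\alpha_0}$ up to logarithmic factors, so it cannot upgrade the pointwise bound $\abs{\dotp{\lambda^\star}{r(t)}} \leq \norm{\lambda^\star}\norm{r(t)} = \cO\pa{t^{-1/(2\alpha_0)}}$ to the order $t^{-1/\alpha_0}$. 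And convergence of the trajectory --- in particular of the dual variable $\lambda(t)$ --- is precisely what is \emph{not} known for \eqref{eq:trials}; the paper lists it as an open problem, so it cannot serve as an auxiliary step.

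The missing idea is that the Lyapunov decrease never uses the saddle-point property of $\lambda^\star$. The only place where that property could matter is the sign of the gap $\cF_\mu(w) = \cL_\mu(x,y,\lambda^\star) - \cL_\mu(x^\star,y^\star,\lambda^\star)$, but condition \ref{cond:G4} --- which is exactly your identities $\alpha b = \alpha_0^2\dot\beta$ and $\alpha^2 b = \alpha_0^2\beta$ --- makes the coefficient of $\cF_\mu$ in $\frac{d}{dt}\cE$ vanish identically, so this sign is irrelevant. Consequently, for \emph{every} fixed $\lambda^\star \in \cZ$ with $\norm{\lambda^\star} \leq 1$ the corresponding energy is non-increasing, and its initial value $\cE(t_0)$ is bounded uniformly over such $\lambda^\star$ (all occurrences of $\lambda^\star$ in $\cE(t_0)$ are through $\dotp{\lambda^\star}{r(t_0)}$ and $\norm{\lambda(t_0)-\lambda^\star}$). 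Dropping the nonnegative terms of $\cE(t)$ then gives, for every $t$ and every unit vector $\lambda^\star$,
\begin{equation*}
\alpha_0^2\, t^{1/\alpha_0}\bpa{F(x(t),y(t)) - F^\star + \dotp{\lambda^\star}{r(t)}} \leq C ,
\end{equation*}
with $C$ independent of $t$ and of $\lambda^\star$. Fixing $t$ and choosing $\lambda^\star = r(t)/\norm{r(t)}$ when $r(t) \neq 0$ (and $\lambda^\star = 0$ otherwise) turns the bracket into $F(x(t),y(t)) - F^\star + \norm{r(t)} \geq F(x(t),y(t)) - F^\star$, which is precisely the claimed upper bound. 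This ``free multiplier optimized over the unit sphere'' device is how the paper reaches the $t^{-1/\alpha_0}$ rate on the objective; with it added, your argument is complete and otherwise identical to the paper's.
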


For the ADMM algorithm (thus in discrete time $t=k h$, $k \in \N, h > 0$), it has been shown in \cite{DavisYin16,DavisYin17} that the convergence rate of (squared) feasibility is $\cO\pa{\frac{1}{k}}$ and that on $|F(x_k,y_k)-F(x^\star,y^\star)|$ is $\cO\pa{\frac{1}{k^{1/2}}}$. These rates were shown to be essentially tight in \cite{DavisYin16}. Our results then suggest than for $\alpha_0=\demi$, a proper discretization of \eqref{eq:trials} would lead to an accelerated ADMM algorithm with provably faster convergence rates (see \cite{Kang15,Kang13} in this direction on specific problem instances and algorithms). These discrete algorithmic issues of \eqref{eq:trials} will be investigated in a future work.

Again, for $\alpha_0 = \demi$, the $\cO\pa{\frac{1}{t^2}}$ rate obtained on the Lagrangian is reminiscent of the fast convergence obtained with the continuous-time dynamical version of the Nesterov accelerated gradient method in which the viscous damping coefficient is of the form 
$\gamma (t) = \frac{\gamma_0}{t}$ and the fast rate is obtained for $\gamma_0 \geq 3$; see \cite{ACPR,SBC}. With our notations this corresponds to $\gamma_0 = \frac{\eta+\alpha_0}{\alpha_0}$, and our choice $\alpha_0 = \demi$ entails $\gamma_0=2\eta+1 > 3$. This corresponds to the same critical value as Nesterov's but the inequality here is strict. This is not that surprising in our context since one has to handle the dual multiplier and there is an intricate interplay between $\gamma$ and the extrapolation coefficient $\alpha$.

\subsection{The role of extrapolation}
One of the key and distinctive features of \eqref{eq:trials} is that the partial gradients (with the appropriate sign) of the augmented Lagrangian function are not evaluated at $(x(t),y(t),\lambda(t))$ as it would the case in a classical continuous-time system associated to ADMM-type methods, but rather at extrapolated points. This new property will be instrumental to allow for faster convergence rates, and it can be interpreted from different standpoints: optimization, game theory, or control:

\begin{enumerate}[label=$\bullet$]
\item \textit{Optimization standpoint}: in this field, this type of extrapolation was recently studied in \cite{ACR-Optimization-2020,HHF,ZLC}. It will play a key role in the development of our Lyapunov analysis. Observe that $\alpha(t) \dot{x}(t)$ and $\alpha(t) \dot{\lambda}(t)$ point to the direction of future movement of  $x(t)$ and $\lambda(t)$. Thus, \eqref{eq:trials} involves the estimated future positions 
$x(t) + \alpha(t) \dot{x}(t)$ and $\lambda(t) + \alpha(t) \dot{\lambda}(t)$. 
Explicit discretization 
$x_k + \alpha_k (x_{k}-x_{k-1})$ and $\lambda_k + \alpha_k (\lambda_{k}-\lambda_{k-1})$ gives an extrapolation similar to the accelerated method of Nesterov. The implicit discretization reads
$x_k + \alpha_k (x_{k+1}-x_k)$ and $\lambda_k + \alpha_k (\lambda_{k+1}-\lambda_k)$. For $\alpha_k=1$, this gives $x_{k+1}$ and $\lambda_{k+1}$, which would yield implicit algorithms with associated stability properties.

\item \textit{Game theoretic standpoint}: let us think about $(x,y)$ and $\lambda$ as two players playing against each other, and shortly speaking, we identify the players with their actions. We can then see that in \eqref{eq:trials}, each player anticipates the movement of its opponent. In the coupling term, the player $(x,y)$ takes account of the anticipated position of the player
$\lambda$, which is  $\lambda(t) + \alpha(t) \dot{\lambda}(t)$, and vice versa.

\item \textit{Control theoretic standpoint}: the structure of \eqref{eq:trials} is also related to control theory and state derivative feedback. By defining $w(t)= (x(t), y(t), \lambda(t))$ the equation can be written in an equivalent way
\[
\ddot{w}(t) + \gamma (t) \dot{w}(t) = K(t,w(t), \dot{w}(t)),
\]
for an operator $K$ appropriately identified from \eqref{eq:trials} in terms of the partial gradients of $\cL_\mu$, $\alpha$ and $b$.
In this system, the feedback control term $K$, which takes the constraint into account, is not only a function of the state $w(t)$ but also of its derivative. One can consult \cite{MVHN} for a comprehensive treatment of state derivative feedback.
Indeed, we will use $\alpha(\cdot)$ as a control variable, which will turn to play an important role in our subsequent developments.
\end{enumerate}

\subsection{Associated monotone inclusion problem}
The optimality system \eqref{opt_system} can be written equivalently as
\begin{equation}\label{descrip00}
T_{\cL}(x, y, z) = 0 ,
\end{equation}
where $T_{\cL}:   \cX \times \cY \times \cZ \to \cX \times \cY \times \cZ $ is the maximally monotone operator associated with the convex-concave function $\cL$,
and which is defined by
\begin{eqnarray}
T_{\cL}(x, y, \lambda) &= &\left( \nabla_{x,y} \cL, \, -\nabla_{\lambda} \cL \right)(x, y, \lambda) \nonumber  \\
&=& \left(\nabla f(x) + A^\top \lambda, \; \nabla g(y) + B^\top \lambda, \; -( Ax +By-c)\right).\label{descrip01}
\end{eqnarray}
Indeed, it is immediate to verify that $T_{\cL}$ is monotone using \eqref{eq:HP}. Since it is continuous, it is a maximally monotone operator. Another way of seeing it is to use the standard splitting of $T_{\cL}$ as
$
T_{\cL}= T_1 + T_2
$
where 
\begin{eqnarray*}
T_1(x, y, \lambda)& =& \left(\nabla f(x) , \ \nabla g(y) , 0 \right)\\
T_2 (x, y, \lambda) &=& \left( A^\top \lambda, \  B^\top \lambda, \ -( Ax +By-c) \right).
\end{eqnarray*}
The operator $T_1 = \partial \Phi $ is nothing but gradient of the convex function $\Phi (x,y,\lambda) = f(x) + g(y)$, and therefore is maximally monotone owing to \eqref{eq:HP} (recall that convexity of a differentiable function implies maximal monotonicity of its gradient \cite{Rock1}). The operator $T_2$ is obtained by translating a linear continuous and skew-symmetric operator, and therefore it is also maximally monotone. This immediately implies that $T_{\cL}$ is maximally monotone as the sum of two maximally monotone operators, one of them being Lipschitz continuous (\cite[Lemma~2.4, page~34]{Bre1}). In turn, $\sS$ can be interpreted as the set of zeros of the maximally monotone operator $T_{\cL}$. As such, it is a closed convex subset of $\cX \times \cY \times \cZ$.

The evolution equation associated to $T_{\cL}$ is written
\begin{equation}\label{basic-dyn}
\left\{\begin{array}{lll}
\; \dot{x}(t) 
+ \nabla f  (x(t))  +   A^\top  (\lambda(t)) &=&0\vspace{2mm}\\
 \;   \dot{y}(t) 
+ \nabla g  (y(t))    
+B^\top  (\lambda(t))  &=&0 \vspace{2mm} \\
\;  \dot{\lambda}(t)   - (A(x(t)) + B(y(t)) -c)&=&0
 \end{array}\right.
\end{equation}
Following \cite{Bre1}, the Cauchy problem \eqref{basic-dyn} is well-posed, and the solution trajectories of \eqref{basic-dyn}, which define a semi-group of contractions generated by $T_{\cL}$, converge weakly in an ergodic sense to equilibria, which are the zeros of the operator $T_{\cL}$. Moreover, appropriate implicit discretization of \eqref{basic-dyn} yields the proximal ADMM algorithm.

The situation is  more complicated if we consider the corresponding inertial dynamics. Indeed, the convergence theory for the heavy ball method can be naturally extended to the case of maximally monotone cocoercive operators. Unfortunately, because of the skew-symmetric component $T_2$ in $T_{\cL}$ (when $c=0$), the operator $T_{\cL}$ is \textit{not} cocoercive. To overcome this difficulty, recent studies consider inertial dynamics where the operator $T_{\cL}$ is replaced by its Yosida approximation, with an appropriate adjustment of the Yosida parameter; see \cite{AP-max} and \cite{Att1} in the case of the Nesterov accelerated method. However, such an approach does not achieve full splitting algorithms, hence requiring an additional internal loop.

\section{Lyapunov analysis}\label{sec:Lyap}
Let $(x^\star,y^\star) \in \cX \times \cK$ be a solution  of \eqref{eq:P}, and denote by $F^\star \eqdef F(x^\star,y^\star)$ the optimal value of \eqref{eq:P}. For the moment, the variable $\lambda^\star$  is chosen arbitrarily in  $\cZ $. We will then be led to specialize it. Let $t \mapsto (x(t),y(t),\lambda(t))$ be a solution trajectory of  \eqref{eq:trials} defined for $t\geq t_0$. It is supposed to be a classical solution, \ie of class $\cC^2$. We are now in position to introduce the function $t \in  [t_0, +\infty[\; \mapsto \cE(t)\in \R$ that will serve as a Lyapunov function,
\begin{eqnarray}
&&\cE(t)\eqdef \delta^2(t)b(t)\Big( \cL_\mu(x(t), y(t), \lambda^\star)- \cL_\mu(x^\star, y^\star, \lambda^\star)\Big)+ \frac{1}{2}\norm{v(t)}^{2}  \label{eq:lyapcont}\\ 
 && \qquad\qquad + \frac12\xi(t)\|(x(t), y(t), \lambda(t))-(x^\star, y^\star, \lambda^\star)\|^2, \nonumber
\vspace{4mm}\\
&& v(t)\eqdef \sigma (t)\Big((x(t), y(t), \lambda(t))-(x^\star, y^\star, \lambda^\star)\Big)+\delta(t)(\dot x(t), \dot y(t), \dot \lambda(t)). \label{eq:lyapcont_b} 
\end{eqnarray}

The coefficient $\sigma(t)$ is non-negative and will be adjusted later, while $\delta(t), \xi(t)$ are explicitly defined by the following formulas:
\begin{equation}\label{basic_choice_0}
\begin{cases}
\delta(t) \eqdef \sigma(t) \alpha(t), \\
\xi(t) \eqdef \sigma(t)^2\Big(\gamma(t)\alpha (t)-\dot \alpha (t)-1 \Big) -2 \alpha(t) \sigma(t) \dot \sigma(t)
\end{cases}
\end{equation}
This choice will become clear from our Lyapunov analysis.
To guarantee that $\cE$ is a Lyapunov function for the dynamical system \eqref{eq:trials}, the following conditions on the coefficients $\gamma, \,  \alpha,  \, b, \, \sigma$ will naturally arise from our analysis:
\begin{center}
\begin{tabular}{|c|}\hline
Lyapunov system of inequalities/equalities on the parameters. \\\hline
\parbox{\textwidth}{
\begin{enumerate}[label=($\cG_{\arabic*}$),itemindent=10ex]
\item $\sigma(t)\bpa{\gamma(t)\alpha (t)-\dot \alpha (t) -1} -2 \alpha(t) \dot \sigma(t) \geq 0$, \label{cond:G1}
\item $\sigma (t)\bpa{\gamma(t)\alpha (t)- \dot \alpha (t) - 1} - \alpha(t)\dot{\sigma} (t) \geq 0$,  \label{cond:G2}
\item $-\frac{d}{dt}\brac{\sigma(t) \pa{\sigma (t)\bpa{\gamma (t)\alpha(t) - \dot\alpha (t)} -2 \alpha (t)\dot\sigma (t)}} \geq 0$, \label{cond:G3}
\item $\alpha (t)\sigma (t)^2 b(t) -\frac{d}{dt}\left(\alpha^2 \sigma^2 b\right)(t)= 0$. \label{cond:G4}
\end{enumerate}}\\\hline
\end{tabular}
\end{center}
Observe that condition \ref{cond:G1} automatically ensures that $\xi(t)$ is a non-negative function.
In most practical situations (see Section~\ref{sec:particular}), we will take $\sigma$ as a non-negative constant, in which case \ref{cond:G1} and \ref{cond:G2} coincide, and thus conditions \ref{cond:G1}--\ref{cond:G4} reduce to a system of three differential inequalities/equalities involving only the coefficients $(\gamma,\alpha,b)$ of the dynamical system \eqref{eq:trials}.


%
\if
{
\begin{equation} \label{basic_choice_0}
\delta (t)= \sigma(t) \alpha (t), 
\end{equation}
\begin{equation}\label{def:xi_0}
\xi (t)\eqdef\sigma (t)^2\Big(\dot \alpha (t)+\gamma(t)\alpha (t)-1 \Big) -2 \alpha(t) \sigma(t) \dot \sigma(t).
\end{equation}
}
\fi
\if
{
In the part $(a)$ of the theorem, we only assume that the solution set of \eqref{eq:P} is non-empty, and  only get an upper bound of $F(x(t),y(t))-F^\star $.  
In the part $(b)$ of the theorem we assume that the solution set
$\sS$ of equilibria of the saddle value problem is 
non-empty,  then get  an upper bound of the absolute value $| F(x(t),y(t))-F^\star| $.
}
\fi

\subsection{Convergence rate of the values}

By relying on a Lyapunov analysis with the function $\cE$, we are now ready to state our first main result.

\begin{theorem}\label{ACFR,rescale}
Assume that \eqref{eq:HP} and \eqref{eq:HD} hold. Suppose that the growth conditions  
\ref{cond:G1}--\ref{cond:G4} on the parameters $(\gamma, \alpha, \sigma, b)$ of \eqref{eq:trials} are satisfied for all $t\geq t_0$. 
Let $t\in [t_0, +\infty[  \mapsto (x(t),y(t),\lambda(t))$ be a solution trajectory of  \eqref{eq:trials}. Let $\cE$ be the function defined in \eqref{eq:lyapcont}-\eqref{eq:lyapcont_b}. Then the following holds:
\begin{enumerate}[label=(\arabic*)] 
\item $\cE$ is a non-increasing function, and for all $t\geq t_0$ \label{theoACFRrescale:itema}
\[
F(x(t),y(t))-F^\star  = \cO\pa{\frac{1}{\alpha(t)^2\sigma(t)^2 b(t)}} .
\]

\item Suppose moreover that $\sS$, the set of saddle points of $\cL$ in \eqref{eq:minmax} is non-empty, and let $(x^\star,y^\star,\lambda^\star) \in \sS$. Then for all $t\geq t_0$, the following rates and integrability properties are satisfied: \label{theoACFRrescale:itemb}
\begin{enumerate}[label=(\roman*),itemindent=10ex]
\item \label{theoACFRrescale:itembi}
$
0 \leq \cL(x(t),y(t),\lambda^\star) - \cL(x^\star,y^\star,\lambda^\star)=\cO\pa{\frac {1}{\alpha(t)^2\sigma(t)^2 b(t)}};
$
\item \label{theoACFRrescale:itembii}
$
\anorm{Ax(t)+By(t)-c}^2=\cO\pa{\frac {1}{\alpha(t)^2\sigma(t)^2 b(t)}};
$
\item there exists positive constants $C_1$ and $C_2$ such that \label{theoACFRrescale:itembiii}
\[
-\frac{C_1}{\alpha(t)\sigma(t) \sqrt{b(t)}} \leq F(x(t),y(t))-F^\star \leq \frac{C_2}{\alpha(t)^2\sigma(t)^2 b(t)};
\]
\item \label{theoACFRrescale:itembiv}
$
\displaystyle{\int_{t_0}^{+\infty} \alpha(t)\sigma(t)^2b(t)\anorm{Ax(t)+By(t)-c}^2 dt <+\infty} ;
$
\item \label{theoACFRrescale:itembv}
$
\displaystyle{\int_{t_0}^{+\infty}
k(t) \anorm{(\dot {x}(t), \dot {y}(t), \dot{\lambda}(t))}^2 dt <+\infty},
$
where 
\[
k(t)=  \alpha (t)\sigma(t) \bpa{\sigma(t)\pa{\gamma(t)\alpha (t) - \dot \alpha(t) - 1} - \alpha(t) \dot \sigma (t)}.
\]
\end{enumerate}
\end{enumerate}
\end{theorem}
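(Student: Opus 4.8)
The plan is to prove that $\cE$ is non-increasing along trajectories and then to read off every rate and integrability claim from the resulting differential inequality. Write $w=(x,y,\lambda)$, $w^\star=(x^\star,y^\star,\lambda^\star)$ and $r=Ax+By-c$. The first step is a structural observation: since the extrapolation enters $\cL_\mu$ only through the bilinear coupling $\dotp{\lambda}{Ax+By-c}$, the field appearing on the right-hand side of \eqref{eq:trials} splits as $T_{\cL_\mu}(w)+\alpha(t)S\dot w$, where $T_{\cL_\mu}(w)=\pa{\nabla_x\cL_\mu(w),\nabla_y\cL_\mu(w),-\nabla_\lambda\cL_\mu(w)}$ is the monotone saddle operator evaluated at the current state and $S(u,v,p)=\pa{A^\top p,B^\top p,-(Au+Bv)}$ is skew-symmetric. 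Thus \eqref{eq:trials} reads $\ddot w+\gamma\dot w+b\pa{T_{\cL_\mu}(w)+\alpha S\dot w}=0$, and I will substitute this expression for $\ddot w$.

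The second step is a scalar inequality. Using only the feasibility $Ax^\star+By^\star=c$, the terms carrying $\lambda-\lambda^\star$ cancel and one finds $\dotp{w-w^\star}{T_{\cL_\mu}(w)}=\dotp{\nabla_{x,y}\cL_\mu(x,y,\lambda^\star)}{(x,y)-(x^\star,y^\star)}$; convexity of $(x,y)\mapsto\cL_\mu(x,y,\lambda^\star)$ then yields $\dotp{w-w^\star}{T_{\cL_\mu}(w)}\geq \cL_\mu(x,y,\lambda^\star)-F^\star=:\mathcal V(t)$. Isolating the augmentation quadratic shows moreover that the Bregman remainder $D:=\dotp{w-w^\star}{T_{\cL_\mu}(w)}-\mathcal V$ satisfies $D\geq \tfrac\mu2\anorm{r}^2\geq 0$, which will later produce the feasibility integrability.

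The heart of the proof is the computation of $\dot{\cE}$. After substituting $\ddot w$ into $\tfrac{d}{dt}\tfrac12\norm{v}^2=\dotp{v}{\dot v}$, three things happen, each forced by the specific coefficient choices: skew-symmetry kills $\dotp{\dot w}{S\dot w}$; the factor $\delta^2b$ in front of the value term in $\cE$ exactly cancels the $\tfrac{d}{dt}\cL_\mu(x,y,\lambda^\star)$ contribution coming out of $-\delta^2 b\dotp{\dot w}{T_{\cL_\mu}(w)}$; and the choice $\delta=\sigma\alpha$ forces the surviving cross terms in $\dotp{r}{\lambda-\lambda^\star}$ and $\dotp{r}{\dot\lambda}$ to cancel in pairs. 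By the definition of $\xi$ in \eqref{basic_choice_0}, the coefficient of $\dotp{w-w^\star}{\dot w}$ is then identically zero, and replacing $\dotp{w-w^\star}{T_{\cL_\mu}(w)}$ by $\mathcal V+D$ leaves
\[
\dot{\cE}=\brac{\tfrac{d}{dt}(\delta^2 b)-\sigma\delta b}\mathcal V-\sigma\delta b\,D+\tfrac12\tfrac{d}{dt}(\sigma^2+\xi)\norm{w-w^\star}^2-k(t)\norm{\dot w}^2 .
\]
Condition \ref{cond:G4} annihilates the $\mathcal V$-coefficient; \ref{cond:G3} makes $\tfrac{d}{dt}(\sigma^2+\xi)\leq 0$; the identity $k(t)=-\delta(\sigma+\dot\delta-\delta\gamma)$ together with \ref{cond:G2} gives $k(t)\geq 0$; and \ref{cond:G1} ensures $\xi\geq 0$, so that $\cE\geq 0$. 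Hence $\dot{\cE}\leq 0$.

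Finally I extract the conclusions. Discarding the nonnegative squared terms in $\cE(t)\leq\cE(t_0)$ gives $\delta^2 b\,\mathcal V\leq \cE(t_0)$, i.e. the $\cO(1/(\alpha^2\sigma^2 b))$ rate on $\mathcal V$; part (1) follows from $F-F^\star\leq \cL_\mu(x,y,0)-F^\star$, applying the same analysis with the reference multiplier $0$. When $\sS\neq\emptyset$ and $\lambda^\star$ is a genuine multiplier, the saddle inequality gives $\cL(x,y,\lambda^\star)-F^\star\geq 0$, whence (i), and $\mathcal V\geq \tfrac\mu2\anorm{r}^2$ gives (ii); the two-sided estimate (iii) combines the fast upper bound inherited from part (1) with the lower bound $F-F^\star\geq -\dotp{\lambda^\star}{r}\geq -\anorm{\lambda^\star}\anorm{r}$ and (ii). Integrating $\dot{\cE}\leq 0$ over $[t_0,+\infty[$ and using $\cE\geq 0$ bounds each nonpositive term separately, which yields (v) directly and (iv) via $D\geq\tfrac\mu2\anorm{r}^2$ and $\sigma\delta b=\alpha\sigma^2 b$. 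The main obstacle is the bookkeeping in the third step: verifying the two families of cancellations and checking that the four surviving coefficients match \ref{cond:G1}--\ref{cond:G4} exactly; once these are in place the rest is routine.
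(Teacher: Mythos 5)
Your proposal is correct and, at its core, follows the same route as the paper: the identical Lyapunov function $\cE$, the cancellation of the $\dotp{\dot w}{w-w^\star}$ coefficient forced by the definitions of $\delta$ and $\xi$ in \eqref{basic_choice_0}, the cancellation of the dangerous cross terms forced by $\delta=\sigma\alpha$, and the use of \ref{cond:G1}--\ref{cond:G4} to sign the surviving terms; your displayed identity for $\dot\cE$ is exactly the paper's inequality \eqref{basic-lyap-1} with the Bregman remainder $D$ kept explicit (indeed one checks $k(t)=-\delta(\sigma+\dot\delta-\delta\gamma)$ and $\sigma^2+\xi=\sigma\bpa{\sigma(\gamma\alpha-\dot\alpha)-2\alpha\dot\sigma}$, so \ref{cond:G3} is precisely $\frac{d}{dt}(\sigma^2+\xi)\le 0$), and items (i)--(v) are then extracted as in the paper. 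Two local differences deserve mention. First, your bookkeeping is more structural: writing the right-hand side of \eqref{eq:trials} as $T_{\cL_\mu}(w)+\alpha S\dot w$ with $S$ skew-symmetric makes the vanishing of $\dotp{\dot w}{S\dot w}$ and the pairwise cancellation of the $\dotp{\dot r}{\lambda-\lambda^\star}$, $\dotp{r}{\dot\lambda}$ terms under $\delta=\sigma\alpha$ transparent, where the paper carries the same terms inside $K_{\mu,\alpha}$ and the quantity $\cW$. Second, and this is a genuine deviation, for part (1) you run the whole analysis with the fixed reference multiplier $\lambda^\star=0$ and use $F-F^\star\le\cL_\mu(x,y,0)-F^\star$, whereas the paper keeps $\lambda^\star$ free and then selects the time-dependent unit vector $\lambda^\star=(Ax(t)+By(t)-c)/\anorm{Ax(t)+By(t)-c}$; your choice is simpler and fully sufficient for the stated rate (both arguments, yours and the paper's, prove only the upper bound on $F-F^\star$, which is all part (1) can assert given the slower lower rate in (iii)), while the paper's device buys the slightly stronger intermediate estimate $\delta^2b\,(F-F^\star+\anorm{Ax+By-c})\le C$. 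One small imprecision to repair in the write-up: $\cE\ge 0$ does not follow from \ref{cond:G1} alone, since for an arbitrary $\lambda^\star$ the value term $\cF_\mu$ may be negative; nonnegativity of $\cE$ (needed only when you integrate to get (iv) and (v)) additionally uses the saddle-point inequality $\cL(x,y,\lambda^\star)\ge\cL(x^\star,y^\star,\lambda^\star)$, which you do invoke in part (2), and for part (1) you only need the monotonicity $\cE(t)\le\cE(t_0)$ together with dropping the two nonnegative quadratic terms, which is what you actually use.
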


\begin{proof}
To lighten notation, we drop the dependence on the time variable $t$. Recall that $(x^\star,y^\star)$ is a solution of \eqref{eq:P} and $\lambda^\star$ is an arbitrary vector in $\cZ$. Let us define
\[
w \eqdef (x, y, \lambda), \quad w^\star\eqdef (x^\star, y^\star, \lambda^\star),\quad \cF_\mu(w) \eqdef \cL_\mu(x, y, \lambda^\star)- \cL_\mu(x^\star, y^\star, \lambda^\star).
\]
With these notations we have (recall \eqref{eq:lyapcont} and \eqref{eq:lyapcont_b})
\begin{eqnarray*}  
&& v=\sigma( w-w^\star)+\delta\dot w,\, \\
&& \nabla \cF_\mu(w)=(\nabla_x  \cL_\mu(x, y, \lambda^\star), \nabla_y  \cL_\mu(x, y, \lambda^\star),0)\\
&& \cE= \delta^2b\cF_\mu(w) + \frac{1}{2}\norm{v}^{2} +\frac12\xi\anorm{w-w^\star}^2.
\end{eqnarray*}
Differentiating $\cE$ gives
\begin{equation}\label{der-E}
\dfrac{d}{dt}\cE=\dfrac{d}{dt}(\delta^2b) \cF_\mu(w)+ \delta^2b \dotp{\nabla \cF_\mu(w)}{\dot{w}}+ \dotp{v}{\dot{v}} + \frac12\dot\xi\|w-w^\star\|^2 + \xi \dotp{w-w^\star}{\dot{w}}.
\end{equation}
Using the constitutive equation in \eqref{eq:trials}, we have
\begin{align*}
\dot{v} 
& = \dot \sigma (w-w^\star) + (\sigma  + \dot\delta )\dot w + \delta\ddot w \\
& = \dot \sigma (w-w^\star) + (\sigma  + \dot\delta )\dot w - \delta \pa{\gamma\dot w + b K_{\mu,\alpha}(w)} \\
& = \dot \sigma (w-w^\star) + (\sigma  + \dot\delta - \delta \gamma)\dot w - \delta b K_{\mu,\alpha}(w) ,
\end{align*}
where  the operator $K_{\mu,\alpha}:  \cX \times \cY \times \cZ \to \cX \times \cY \times \cZ $  is defined by 
\begin{equation*}
K_{\mu,\alpha}(w) \eqdef
\begin{bmatrix}
\nabla_x\cL_\mu(x,y,\lambda+\alpha \dot\lambda )\\  
\nabla_y\cL_\mu(x,y,\lambda+\alpha \dot\lambda )\\
-\nabla_\lambda \cL_\mu(x+\alpha \dot x,y+\alpha \dot y,\lambda) 
\end{bmatrix}
\end{equation*}
Elementary computation gives
\begin{equation*}
K_{\mu,\alpha}(w) = \nabla \cF_\mu(w) +
\begin{bmatrix}
A^\top(\lambda-\lambda^\star+\alpha \dot\lambda ) \\  
B^\top(\lambda-\lambda^\star+\alpha \dot\lambda ) \\  
-A(x+\alpha \dot x)-B(y+\alpha \dot y)+c
\end{bmatrix}
\end{equation*}
According to the above formulas for $v$, $\dot v$ and $K_{\mu,\alpha}$, we get
\begin{eqnarray*}
\dotp{v}{\dot{v}} 
&=& \dotp{\dot \sigma (w-w^\star) + (\sigma  + \dot \delta  - \delta \gamma)\dot w - \delta b K_{\mu,\alpha}(w)}{\sigma( w-w^\star)+\delta\dot w}\\
&=& \sigma \dot\sigma\anorm{w-w^\star}^2 + \pa{\delta\dot\sigma+\sigma(\sigma+\dot\delta-\delta\gamma)}\dotp{\dot w}{w-w^\star}  + \delta\pa{\sigma+\dot\delta-\delta\gamma} \anorm{\dot w}^2\\
&& -\delta b\brac{\sigma\dotp{\nabla\cF_\mu(w )}{w-w^\star} + \delta \dotp{\nabla\cF_\mu(w)}{\dot w}}\\
&& -\delta b\brac{\sigma \dotp{\lambda-\lambda^\star+\alpha \dot\lambda}{Ax-Ax^\star} + \delta \dotp{\lambda-\lambda^\star+\alpha \dot\lambda}{A\dot x}}\\
&& -\delta b\brac{\sigma \dotp{\lambda-\lambda^\star+\alpha \dot\lambda}{By-By^\star} + \delta \dotp{\lambda-\lambda^\star+\alpha \dot\lambda}{B\dot y}}\\
&& +\delta b \sigma \dotp{A(x+\alpha \dot x)+B(y+\alpha \dot y)-c}{\lambda-\lambda^\star} \\
&&+ \delta^2 b \dotp{A(x+\alpha \dot x)+B(y+\alpha \dot y)-c}{\dot \lambda}.
\end{eqnarray*}
Let us insert this expression in \eqref{der-E}. We first observe that the term $\dotp{\nabla\cF_\mu(w)}{\dot w}$ appears twice but with opposite signs, and therefore cancels out. Moreover, the coefficient of $\langle \dot w,w-w^\star\rangle$  becomes
$\xi+ \delta\dot\sigma -\sigma(\gamma\delta -\dot \delta- \sigma)$. Thanks to the choice of $\delta$ and $\xi$ devised in \eqref{basic_choice_0}, the term $\dotp{\dot w}{w-w^\star}$ also disappears. We recall that by virtue of \ref{cond:G1}, $\xi$ is non-negative, and thus so is the last term in $\cE$. Overall, the formula \eqref{der-E} simplifies to
\begin{equation}\label{basic-Lyap1}
\begin{split}
\dfrac{d}{dt}\cE 
&= \dfrac{d}{dt}(\delta^2b) \cF_\mu(w)+ \pa{\frac12\dot\xi + \sigma\dot\sigma} \anorm{w-w^\star}^2  + \delta\pa{\sigma+\dot\delta-\delta\gamma}\anorm{\dot w}^2 \\
&-\delta b \sigma \dotp{\nabla\cF_\mu(w)}{w-w^\star} - \delta  b \cW , 
\end{split}
\end{equation}
where 
\begin{eqnarray*}
\cW
&\eqdef& \sigma \dotp{\lambda-\lambda^\star+\alpha \dot\lambda}{Ax-Ax^\star} + \delta \dotp{\lambda-\lambda^\star+\alpha \dot\lambda}{A\dot x}\\
&& + \sigma \dotp{\lambda-\lambda^\star+\alpha \dot\lambda}{By-By^\star} + \delta \dotp{\lambda-\lambda^\star+\alpha \dot\lambda}{B\dot y}\\
&& - \sigma \dotp{A(x+\alpha \dot x)+B(y+\alpha \dot y)-c}{\lambda-\lambda^\star} \\
&& - \delta \dotp{A(x+\alpha \dot x)+B(y+\alpha \dot y)-c}{\dot \lambda}.
\end{eqnarray*}
Since  $(x^\star,y^\star) \in \cX\times\cK $ is a solution of \eqref{eq:P}, we obviously have $A x^\star + B y^\star =c$. Thus, $\cW$ reduces to
\begin{eqnarray*}
\cW
&=& \sigma \dotp{Ax+By-c}{\lambda-\lambda^\star+\alpha \dot\lambda} + \delta \dotp{A\dot x + B\dot y}{\lambda-\lambda^\star+\alpha \dot\lambda}\\
&& - \sigma \dotp{Ax+By-c}{\lambda-\lambda^\star} - \sigma\alpha\dotp{A\dot x + B\dot y}{\lambda-\lambda^\star}\\
&& - \delta \dotp{Ax+By-c}{\dot\lambda} - \delta\alpha\dotp{A\dot x + B\dot y}{\dot\lambda} \\
&=& (\sigma \alpha - \delta)\bpa{\dotp{Ax+By-c}{\dot \lambda} - \dotp{A \dot x + B \dot y}{\lambda-\lambda^\star}}.
\end{eqnarray*}
Since it is difficult to control the sign of the above expression, the choice of $\delta$ in \eqref{basic_choice_0} appears natural, which entails $\cW =0$. 

On the other hand, by convexity of $\cL(\cdot,\cdot,\lambda^\star)$, strong convexity of  $\frac{\mu}{2}\anorm{\cdot - c}^2$, the fact that $Ax^\star+By^\star = c$ and $\cF_\mu(w^\star)=0$, it is straightforward to see that
\[ 
-\cF_\mu(w) - \frac{\mu}{2}\norm{Ax(t)+By(t) - c}^2 \geq \dotp{\nabla \cF_\mu(w)}{w^\star-w}.
\]
Collecting the above results, \eqref{basic-Lyap1} becomes
\begin{eqnarray}\label{basic-lyap-1}
&&\dfrac{d}{dt}\cE 
+ \pa{\delta  b \sigma-\dfrac{d}{dt}(\delta^2b)}\cF_\mu(w)  \\
&&\leq  \pa{\frac12\dot\xi +  \sigma\dot\sigma} \anorm{w-w^\star}^2  + \delta\pa{\sigma+\dot\delta-\delta\gamma}\anorm{\dot w}^2 - \frac{\delta  b \sigma \mu}{2}\norm{Ax(t)+By(t) - c}^2. \nonumber
\end{eqnarray}
Since $\delta$ is non-negative ($\sigma$ and $\alpha$ are), and in view of \ref{cond:G2}, the coefficient of the second term in the right hand side \eqref{basic-lyap-1} is non-positive. The same conclusion holds for the coefficient of the first term since its non-positivity is equivalent to \ref{cond:G3}. 
%
Therefore, inequality \eqref{basic-lyap-1} implies
\begin{equation}\label{basic-Liap-22}
\dfrac{d}{dt}\cE +\left( \delta  b \sigma-\dfrac{d}{dt}(\delta^2b) \right)\cF_\mu(w) \leq 0 .
\end{equation}
%
The sign of $\cF_\mu(w)$ is unknown for arbitrary $\lambda^\star$. This is precisely where we invoke \ref{cond:G4} which is equivalent to
\begin{equation*}\label{def:sigma3}
\delta  b \sigma-\dfrac{d}{dt}(\delta^2b)= 0.
\end{equation*}		
\begin{enumerate}[label=(\arabic*)]
\item Altogether, we have shown so far that \eqref{basic-Liap-22} eventually reads, for any $t\geq t_0$,
\begin{equation}\label{Lypa_0}
\dfrac{d}{dt}\cE(t) \leq 0 ,
\end{equation}
\ie $\cE$ is non-increasing as claimed. Let us now turn to the rates.

$\cE$ being non-increasing entails that for all $t\geq t_0$
\begin{equation}\label{Lypa_decr}
\cE(t) \leq \cE(t_0) .
\end{equation}
Dropping the non-negative terms $\frac{1}{2}\norm{v(t)}^{2}$ and $\frac12\xi(t)\|w(t)-w^\star\|^2$  entering $\cE$, and according to the definition of $\cL_\mu$, we obtain that, for all $t\geq t_0$
\begin{eqnarray}
&&\delta(t)^2b(t)\bpa{\cL_\mu(x(t), y(t), \lambda^\star)- \cL_\mu(x^\star, y^\star, \lambda^\star)} \label{rat-conv-L1}\\
&&= \delta(t)^2b(t) \bpa{\cL(x(t), y(t), \lambda^\star)- \cL(x^\star, y^\star, \lambda^\star) + \frac{\mu}{2}\norm{Ax(t)+By(t) - c}^2} \leq \cE(t_0) . \nonumber
\end{eqnarray}
Dropping again the quadratic term in \eqref{rat-conv-L1}, we obtain
\begin{align*}
\delta(t)^2b(t) &\bpa{F(x(t),y(t))-F^\star + \dotp{\lambda^\star}{Ax(t)+By(t)-c}} \\ 
&\leq  \delta^2(t_0)b(t_0)\Big(F(x(t_0),y(t_0))-F^\star + \dotp{\lambda^\star}{Ax(t_0)+By(t_0)-c} \\
&+\frac{\mu}{2}\anorm{Ax(t_0)+By(t_0)-c}^2\Big)+ \frac{1}{2}\anorm{v(t_0)}^{2}\\
&+\frac12\xi(t_0)\anorm{(x(t_0), y(t_0), \lambda(t_0))-(x^\star, y^\star, \lambda^\star)}^2\\
&\leq \delta^2(t_0)b(t_0)\anorm{\lambda^\star}\anorm{Ax(t_0)+By(t_0)-c} + C_0,
\end{align*}
where $C_0$ is the non-negative constant
\begin{multline}\label{eq:C0}
C_0 
= \delta^2(t_0)b(t_0)\bpa{\abs{F(x(t_0),y(t_0))-F^\star}
+\frac{\mu}{2}\anorm{Ax(t_0)+By(t_0)-c}^2} \\
+ \frac{1}{2}\anorm{v(t_0)}^{2} + \frac12\xi(t_0)\anorm{(x(t_0), y(t_0), \lambda(t_0))-(x^\star, y^\star, \lambda^\star)}^2 .
\end{multline}
When $Ax(t)+By(t)-c = 0$, we are done by taking, \eg $\lambda^\star = 0$ and $C > C_0$.  Assume now that $Ax(t)+By(t)-c \neq 0$. Since $ \lambda^\star$ can be freely chosen in $\cZ$, we take it as the unit-norm vector
\begin{equation}\label{eq:lambdaunit}
\lambda^\star =  \frac{Ax(t)+By(t)-c}{\anorm{Ax(t)+By(t)-c}} .
\end{equation}
We therefore obtain
\begin{equation}\label{eq:estimatelyap}
\delta(t)^2b(t) \bpa{F(x(t),y(t))-F^\star + \anorm{Ax(t)+By(t)-c}} \leq C ,
\end{equation}
where $C > \delta^2(t_0)b(t_0)\norm{Ax(t_0)+By(t_0)-c} + C_0$. Since the second term in the left hand side is non-negative, the claimed rate in \ref{theoACFRrescale:itema} follows immediately.
%

\item Embarking from \eqref{rat-conv-L1} and using \eqref{eq:saddlepoint} since $(x^\star,y^\star,\lambda^\star) \in \sS$, we have the rates stated in \ref{theoACFRrescale:itembi} and \ref{theoACFRrescale:itembii}.

%
To show the lower bound in \ref{theoACFRrescale:itembiii}, observe that the upper-bound of \eqref{eq:saddlepoint} entails that
\begin{equation}\label{basic_minoration}
F(x(t), y(t)) \geq F(x^\star, y^\star)  - \dotp{Ax(t)+ By(t) - c}{\lambda^\star} .
\end{equation}
Applying Cauchy-Schwarz inequality, we infer
\[
F(x(t), y(t)) \geq F(x^\star, y^\star) - \anorm{\lambda^\star}\anorm{Ax(t)+ By(t)-c}.
\]
We now use the estimate \ref{theoACFRrescale:itembii} to conclude. Finally the integral estimates of the feasibility \ref{theoACFRrescale:itembiv} and velocity \ref{theoACFRrescale:itembv} are obtained by integrating \eqref{basic-lyap-1}. \qed  
\end{enumerate} 
\end{proof}

\if
{

\begin{remark}
{~}\medskip
\begin{enumerate}
\item From \eqref{basic-lyap-1} and \ref{cond:G3}, one has also that $\pa{\frac12\dot\xi +  \sigma\dot\sigma} \anorm{w-w^\star}^2 \in L^1([t_0,+\infty[)$. However, as we will argue in Section~\ref{sec:particular}, our conditions on the parameters $(\gamma,\alpha,b,\sigma)$ are easily verifiable when $\sigma$ is taken as a positive constant, and $\gamma\alpha - \dot{\alpha}$ a constant larger than. In this case, \ref{cond:G3} is actually an equality and the above integral estimate becomes vacuous.
\item {\tcr{Jalal:~ If one can prove that the limit of $\norm{w(t)-w^\star}$ exists, then by appropriately strengthening \ref{cond:G2} and \ref{cond:G4}, the rate in Theorem~\ref{ACFR,rescale}\ref{theoACFRrescale:itembii} and the lower rate in \ref{theoACFRrescale:itembiii} can be improved to $o(.)$.}}
\item {\tcr{Jalal:~Convergence of the iterates is completely open at this point and is a challenging problem for the system \eqref{eq:trials}}.}
\end{enumerate}
\end{remark}

}
\fi

\subsection{Boundedness of the trajectory and rate of the velocity}

We will further exploit the Lyapunov analysis developed in the previous section to assert additional properties on the iterates and velocities.

\begin{theorem}\label{ACFR_rescale_boundedness}
Suppose the assumptions of Theorem~\ref{ACFR,rescale} hold.
Assume also that $\sS$, the set of saddle points of $\cL$ in \eqref{eq:minmax} is non-empty, and let $(x^\star, y^\star, \lambda^\star) \in \sS$. 
Then, each solution trajectory  $t\in [t_0, +\infty[  \mapsto (x(t),y(t),\lambda(t))$ of \eqref{eq:trials} satisfies the following properties: 
\begin{enumerate}[label=(\arabic*)]
\item There exists a positive constant $C$ such that, for all $t \geq t_0$
\begin{eqnarray*}
&&\anorm{(x(t), y(t), \lambda(t))-(x^\star, y^\star, \lambda^\star)}^2 \leq \frac{C}{\sigma (t)^2\Big(\gamma(t)\alpha (t)-\dot \alpha (t)-1 \Big) -2 \alpha(t) \sigma(t) \dot \sigma(t)}  \\
&&\norm{(\dot {x}(t), \dot {y}(t), \dot{\lambda}(t))} \leq \frac{C}{\alpha(t)\sigma(t)}\pa{1+\sqrt{\frac{\sigma(t)}{\sigma (t)\pa{\gamma(t)\alpha (t)-\dot \alpha (t)-1} -2 \alpha(t)  \dot \sigma(t)}}}.
\end{eqnarray*}
\item 
\label{ACFR_rescale_boundedness:itemii}
If $\sup_{t \geq t_0} \sigma(t) < +\infty$ and \ref{cond:G1} is strengthened to  
\begin{enumerate}[label=($\cG_{\arabic*}^{+}$),itemindent=10ex]
\item $\inf_{t\geq t_0} \sigma(t)\bpa{\sigma (t)\pa{\gamma(t)\alpha (t)-\dot \alpha (t)-1} - 2 \alpha(t) \dot \sigma(t)} >0$, \label{cond:G1+}
\end{enumerate}
then
\begin{align*}
\sup_{t\geq t_0} \norm{(x(t),  y(t), \lambda(t))}  < +\infty \qandq \norm{(\dot {x}(t), \dot {y}(t), \dot{\lambda}(t))} = \cO\pa{\frac{1}{\alpha(t)\sigma(t)}} .
\end{align*}
If moreover, 
\begin{enumerate}[label=($\cG_{\arabic*}$),itemindent=10ex,start=5]
\item $\inf_{t \geq t_0} \alpha(t) > 0$, \label{cond:G5}
\end{enumerate}
then
\[
\sup_{t\geq t_0} \norm{(\dot {x}(t), \dot {y}(t), \dot{\lambda}(t))} < +\infty .
\]
\end{enumerate}
\end{theorem}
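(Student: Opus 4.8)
The plan is to read off all the stated bounds directly from the Lyapunov estimate of Theorem~\ref{ACFR,rescale}, exploiting that $\cE$ is non-increasing. As in the previous proof, write $w \eqdef (x,y,\lambda)$ and $w^\star \eqdef (x^\star,y^\star,\lambda^\star)$. Since $(x^\star,y^\star,\lambda^\star)\in\sS$, the saddle-point inequality~\eqref{eq:saddlepoint} together with the feasibility $Ax^\star+By^\star=c$ gives $\cL_\mu(x,y,\lambda^\star)-\cL_\mu(x^\star,y^\star,\lambda^\star)\geq 0$, so the value term of $\cE$ is non-negative. Consequently each of the three non-negative terms composing $\cE$ is bounded above by $\cE(t_0)$, and this single observation drives the whole proof.

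For the first bound in part~(1), I would isolate the last term, $\frac12\xi(t)\anorm{w(t)-w^\star}^2 \leq \cE(t_0)$, and recall from~\eqref{basic_choice_0} that $\xi = \sigma^2(\gamma\alpha-\dot\alpha-1)-2\alpha\sigma\dot\sigma = \sigma\bpa{\sigma(\gamma\alpha-\dot\alpha-1)-2\alpha\dot\sigma}$, which is exactly the denominator in the statement; this yields the trajectory bound with $C=2\cE(t_0)$. For the velocity, I would solve the defining relation $v=\sigma(w-w^\star)+\delta\dot w$ for $\dot w$, obtaining $\dot w = \delta^{-1}\bpa{v-\sigma(w-w^\star)}$ with $\delta=\sigma\alpha$, and bound by the triangle inequality using $\anorm{v(t)}\leq\sqrt{2\cE(t_0)}$ (from $\frac12\anorm{v}^2\leq\cE(t_0)$) together with the trajectory bound just obtained. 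After regrouping via $\delta=\sigma\alpha$ and $\xi = \sigma D$, where $D \eqdef \sigma(\gamma\alpha-\dot\alpha-1)-2\alpha\dot\sigma$, one recovers the stated velocity bound.

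For part~(2), under~\ref{cond:G1+} the quantity $\xi$ is bounded away from $0$, so the trajectory bound immediately gives $\sup_{t\geq t_0}\anorm{w(t)-w^\star}<+\infty$, whence $\sup_{t\geq t_0}\norm{w(t)}<+\infty$. For the velocity rate I would use $\anorm{\delta\dot w}=\anorm{v-\sigma(w-w^\star)}\leq\anorm{v}+\sigma\anorm{w-w^\star}$; since $\sup_{t\geq t_0}\sigma(t)<+\infty$ and both $\anorm{v}$ and $\anorm{w-w^\star}$ are bounded, the right-hand side is bounded, hence $\anorm{\dot w}=\cO(1/\delta)=\cO(1/(\alpha\sigma))$. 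Adding~\ref{cond:G5}, \ie $\inf_{t\geq t_0}\alpha(t)>0$ (and $\sigma$ bounded away from $0$, as holds in the regime of interest, \eg $\sigma$ constant), gives $\inf_{t\geq t_0}\delta(t)>0$, whence $\sup_{t\geq t_0}\anorm{\dot w}<+\infty$.

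The computations are essentially routine once the monotonicity of $\cE$ is in hand. The only delicate points I anticipate are the bookkeeping of the identity $\xi=\sigma\bpa{\sigma(\gamma\alpha-\dot\alpha-1)-2\alpha\dot\sigma}$ so that the denominators match the statement exactly, and the positivity/lower-boundedness of $\delta=\alpha\sigma$ required in the last step; this is precisely where the strengthened hypotheses~\ref{cond:G1+} and~\ref{cond:G5}, together with the boundedness of $\sigma$, enter.
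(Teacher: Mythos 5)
Your proposal is correct and follows essentially the same route as the paper's own proof: monotonicity of $\cE$, non-negativity of the Lagrangian term at a saddle point, reading the trajectory bound off the $\frac12\xi\anorm{w-w^\star}^2$ term via the identity $\xi=\sigma\bpa{\sigma(\gamma\alpha-\dot\alpha-1)-2\alpha\dot\sigma}$, and extracting the velocity bound from $v=\sigma(w-w^\star)+\delta\dot w$ with $\delta=\sigma\alpha$ by the triangle inequality. Your parenthetical caveat that one also needs $\sigma$ bounded away from zero to pass from $\anorm{\dot w}=\cO\pa{1/(\alpha\sigma)}$ to $\sup_{t\geq t_0}\anorm{\dot w}<+\infty$ under \ref{cond:G5} is a point the paper itself glosses over (it implicitly has the constant-$\sigma$ regime of Section~\ref{sec:particular} in mind), so if anything you are slightly more careful than the original.
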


\begin{proof}
We start from \eqref{Lypa_decr} in the proof of Theorem~\ref{ACFR,rescale}, which can be equivalently written
\begin{equation*}\label{eq:lyapcont_1}
\begin{split}
&\delta^2(t)b(t)\pa{\cL_\mu(x(t), y(t), \lambda^\star)- \cL_\mu(x^\star, y^\star, \lambda^\star)} + \frac{1}{2}\anorm{v(t)}^{2} \\ 
&+\frac12\xi(t)\anorm{(x(t), y(t), \lambda(t))-(x^\star, y^\star, \lambda^\star)}^2 \leq \cE(t_0).
\end{split} 
\end{equation*}
Since $(x^\star,y^\star,\lambda^\star) \in \sS$, the first term is non-negative by \eqref{eq:saddlepoint}, and thus
\begin{equation*}
\frac{1}{2}\norm{v(t)}^{2} + \frac12\xi(t)\anorm{(x(t), y(t), \lambda(t))-(x^\star, y^\star, \lambda^\star)}^2 \leq \cE(t_0).
\end{equation*}
Choosing a positive constant $C \geq \sqrt{2\cE(t_0)}$, we immediately deduce that for all $t\geq t_0$  
\begin{equation}\label{basic_maj_3}
\anorm{(x(t), y(t), \lambda(t))-(x^\star, y^\star, \lambda^\star)} \leq \frac{C}{\sqrt{\xi(t)}} \quad \text{and} \quad  \anorm{v(t)} \leq C .
\end{equation}
Set $z(t)= (x(t), y(t), \lambda(t))-(x^\star, y^\star, \lambda^\star)$. 
By definition of $v(t)$, we have
\[
v(t)= \sigma(t) z(t) + \delta(t) \dot{z}(t).
\]
From the triangle inequality and the bound \eqref{basic_maj_3}, we get
\[
\delta(t)\anorm{\dot{z}(t)} \leq  C \pa{1+\frac{\sigma(t)}{\sqrt{\xi(t)}}}.
\]
According to the definition \eqref{basic_choice_0} of $\delta (t)$ and $\xi(t)$, we get
\[
\anorm{(\dot {x}(t), \dot {y}(t), \dot{\lambda}(t))} \leq \frac{C}{\alpha(t)\sigma(t)}\pa{1+\sqrt{\frac{\sigma(t)}{\sigma (t)\pa{\gamma(t)\alpha (t)-\dot \alpha (t)-1} - 2 \alpha(t)  \dot \sigma(t)}}},
\]
which ends the proof. \qed 
\end{proof}

\subsection{The role of $\alpha$ and time scaling}
The time scaling parameter $b$ enters the conditions on the parameters only via \ref{cond:G4}, which therefore plays a central role in our analysis. Now consider relaxing \ref{cond:G4} to the inequality
\begin{enumerate}[label=($\cG_{\arabic*}^{+}$),start=4,itemindent=30ex]
\item $\frac{d}{dt}\left(\alpha^2 \sigma^2 b\right)(t) - \alpha (t)\sigma (t)^2 b(t) \geq 0$. \label{cond:G4+}
\end{enumerate}
This is a weaker assumption in which case the corresponding term in \eqref{basic-Liap-22} does not vanish.
However, such an inequality can still be integrated to yield meaningful convergence rates. This is what we are about to prove.
\begin{theorem}\label{Lyap_gen}
Suppose the assumptions of Theorem~\ref{ACFR,rescale}\ref{theoACFRrescale:itemb} hold, where condition \ref{cond:G4} is replaced with \ref{cond:G4+}. Let $(x^\star,y^\star,\lambda^\star) \in \sS \neq \emptyset$. Assume also that $\inf F(x,y) > -\infty$. Then, for all $t \geq t_0$
\begin{align}
\cL(x(t),y(t),\lambda^\star) - \cL(x^\star,y^\star,\lambda^\star) &= \cO\pa{\exp\pa{-\int_{t_0}^t\frac{1}{\alpha(s)}ds}}, \label{rat-conv-L*-gen} \\
\norm{Ax(t)+By(t)-c}^2 &= \cO\pa{\exp\pa{-\int_{t_0}^t\frac{1}{\alpha(s)}ds}} , \label{rat-conv-L*-gen-b} \\
-C_1\exp\pa{-\int_{t_0}^t\frac{1}{2\alpha(s)}ds} \leq F( x(t),y(t))-F^\star &\leq C_2\exp\pa{-\int_{t_0}^t\frac{1}{\alpha(s)}ds} , \label{rat-conv-L*-gen-obj}
\end{align}
where $C_1$ and $C_2$ are positive constants.
\end{theorem}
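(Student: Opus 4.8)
The plan is to revisit the differential inequality \eqref{basic-Liap-22} that was established in the course of proving Theorem~\ref{ACFR,rescale}, namely
\[
\dfrac{d}{dt}\cE + \pa{\delta b \sigma - \dfrac{d}{dt}(\delta^2 b)}\cF_\mu(w) \leq 0.
\]
Under the original condition \ref{cond:G4} the prefactor of $\cF_\mu(w)$ vanished, giving $\cE' \leq 0$; now with the relaxed condition \ref{cond:G4+} this coefficient no longer vanishes but has a controlled sign. The key observation is that, using $\delta = \sigma\alpha$ (from \eqref{basic_choice_0}), one has $\delta b \sigma - \frac{d}{dt}(\delta^2 b) = \alpha\sigma^2 b - \frac{d}{dt}(\alpha^2\sigma^2 b)$, which is precisely $-$ (the left side of \ref{cond:G4+}), hence non-positive. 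Thus the remaining term is $-$(nonnegative)$\cdot\cF_\mu(w)$, and when $(x^\star,y^\star,\lambda^\star)\in\sS$ we have $\cF_\mu(w)\geq 0$ by \eqref{eq:saddlepoint}, so the whole correction term has a sign that must be tracked rather than discarded.

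The main technical step I would carry out is a Grönwall-type integration. I would like to bound $\cF_\mu(w)$ from below in terms of $\cE$ itself so as to close a differential inequality. Since $\cE = \delta^2 b\,\cF_\mu(w) + \frac12\norm{v}^2 + \frac12\xi\norm{w-w^\star}^2$ and the last two terms are non-negative, a naive bound gives $\delta^2 b\,\cF_\mu(w)\leq \cE$, i.e. $\cF_\mu(w)\leq \cE/(\delta^2 b) = \cE/(\alpha^2\sigma^2 b)$. Feeding this into the inequality, and writing the coefficient as $\frac{d}{dt}(\alpha^2\sigma^2 b) - \alpha\sigma^2 b \geq 0$, I expect to obtain
\[
\dfrac{d}{dt}\cE \leq -\pa{\dfrac{d}{dt}(\alpha^2\sigma^2 b) - \alpha\sigma^2 b}\cF_\mu(w).
\]
The cleaner route, though, is to track the quantity $\cF_\mu(w)$ directly. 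Returning to \eqref{basic-Liap-22} with the sign information, one gets $\cE' \leq -\big(\alpha\sigma^2 b - \frac{d}{dt}(\alpha^2\sigma^2 b)\big)\cF_\mu(w)$. Since $\cF_\mu(w)\geq 0$ and the bracket is $\leq 0$, this again yields only $\cE'\leq$ (nonnegative), which is the wrong direction. This tension signals that the correct manipulation is to group $\delta^2 b\,\cF_\mu(w)$ together and produce a logarithmic-derivative estimate: I would compute $\frac{d}{dt}\big(\delta^2 b\,\cF_\mu(w)\big)$ and combine it with the chain started from $\cE'\leq 0$ under \ref{cond:G4}, now perturbed by the defect in \ref{cond:G4+}.

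Concretely, the decisive computation is to show that the relevant energy decays like $\exp\big(-\int_{t_0}^t \alpha(s)^{-1}ds\big)$. The exponent arises because, under the parameter choice, the defect coefficient satisfies $\big(\alpha\sigma^2 b - \frac{d}{dt}(\alpha^2\sigma^2 b)\big)/(\alpha^2\sigma^2 b) = \alpha^{-1} - \frac{d}{dt}\log(\alpha^2\sigma^2 b)$; integrating the corresponding Grönwall inequality produces exactly the factor $\exp(-\int \alpha^{-1})$ after the algebraic weights $\alpha^2\sigma^2 b$ cancel. I would define an auxiliary function $\cE_\beta(t) = \beta(t)\cE(t)$ with an integrating factor $\beta$ chosen so that $\cE_\beta' \leq 0$, reducing the relaxed case to a monotonicity statement, and then read off the rate $\cL(x(t),y(t),\lambda^\star)-\cL(x^\star,y^\star,\lambda^\star) = \cO\big(1/(\alpha^2\sigma^2 b)\big)\cdot$(decay of $\beta^{-1}$), which after substituting the explicit weights collapses to \eqref{rat-conv-L*-gen}. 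The feasibility bound \eqref{rat-conv-L*-gen-b} and the two-sided objective bound \eqref{rat-conv-L*-gen-obj} then follow exactly as in Theorem~\ref{ACFR,rescale}, via the unit-norm choice of $\lambda^\star$ in \eqref{eq:lambdaunit}, the saddle-point inequality \eqref{eq:saddlepoint}, and Cauchy--Schwarz for the lower bound (with the assumption $\inf F > -\infty$ replacing the boundedness that $\sS\neq\emptyset$ previously supplied).

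The hard part will be identifying the correct integrating factor $\beta$ and verifying that the defect term, once divided by the weight $\alpha^2\sigma^2 b$ carried by $\cF_\mu(w)$ inside $\cE$, reduces precisely to $\alpha^{-1}$ in the exponent. This requires care because $\cF_\mu(w)$ appears in $\cE$ with weight $\delta^2 b = \alpha^2\sigma^2 b$ but in the defect term with weight $\alpha\sigma^2 b$, and the ratio of these is exactly $\alpha^{-1}$ — which is what makes the clean exponential rate emerge. Keeping the non-negative terms $\frac12\norm{v}^2$ and $\frac12\xi\norm{w-w^\star}^2$ correctly signed throughout the Grönwall argument is the bookkeeping obstacle I would be most careful about.
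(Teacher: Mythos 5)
Your proposal is correct and takes essentially the same route as the paper: the paper's own proof converts \eqref{basic-Liap-22} into the Gr\"onwall inequality $\frac{d}{dt}\cE \leq \frac{\frac{d}{dt}\pa{\alpha^2\sigma^2 b}-\alpha\sigma^2 b}{\alpha^2\sigma^2 b}\,\cE$ (its \eqref{eq:Eode}), which is precisely your integrating-factor step based on $\cF_\mu(w)\leq \cE/(\alpha^2\sigma^2 b)$ and the weight ratio $\alpha^{-1}$, then integrates to get $\cE(t)\leq C\,\alpha(t)^2\sigma(t)^2 b(t)\exp\pa{-\int_{t_0}^t\frac{ds}{\alpha(s)}}$ and reads off the three rates exactly as you describe. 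The only point where the paper is more explicit than your sketch is the upper bound in \eqref{rat-conv-L*-gen-obj} with the $t$-dependent unit-norm $\lambda^\star$ of \eqref{eq:lambdaunit}: since $\cF_\mu$ is then no longer sign-definite, the paper uses $\inf F>-\infty$ together with \eqref{rat-conv-L*-gen-b} to bound $\cE$ from below and integrates the same differential inequality for the shifted, nonnegative energy $\tilde{\cE}$ — the precise form of the role you correctly attribute to that assumption.
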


\begin{proof}
We embark from \eqref{basic-Liap-22} in the proof of Theorem~\ref{ACFR,rescale}. In view of \ref{cond:G4+} and \eqref{basic_choice_0}, \eqref{basic-Liap-22} becomes
\begin{equation}\label{eq:Eode}
0\geq \dfrac{d}{dt}\cE - \pa{\frac{d}{dt}\pa{\alpha^2 \sigma^2 b}-\alpha \sigma ^2 b} \cF_\mu(w) 
\geq \dfrac{d}{dt}\cE - \dfrac{\frac{d}{dt}\pa{\alpha^2 \sigma^2 b}-\alpha \sigma ^2 b}{\alpha^2 \sigma^2 b}\cE .
\end{equation}
Since $(x^\star,y^\star,\lambda^\star) \in \sS$, $\cF_\mu$ is non-negative and so is the Lyapunov function $\cE$. Integrating \eqref{eq:Eode}, we obtain the existence of a positive constant $C$ such that, for all $t\geq t_0$
\[
0 \leq \cE(t) \leq C \alpha(t)^2 \sigma (t)^2 b(t) \exp\pa{-\int_{t_0}^t\frac{1}{\alpha(s)}ds},
\]
which entails, after dropping the positive terms in $\cE$,
\begin{equation}\label{eq:Fmurate}
\cF_\mu(w(t))\leq C \exp\pa{-\int_{t_0}^t\frac{1}{\alpha(s)}ds}.
\end{equation}
\eqref{rat-conv-L*-gen} and \eqref{rat-conv-L*-gen-b} follow immediately from \eqref{eq:Fmurate} and the definition of $\cF_\mu$. 

Let us now turn to \eqref{rat-conv-L*-gen-obj}. Arguing as in the proof of Theorem~\ref{ACFR,rescale}\ref{theoACFRrescale:itemb}, we have
\begin{align*}
F( x(t),y(t))-F^\star \geq -\anorm{\lambda^\star}\anorm{Ax(t)+By(t)-c} .
\end{align*}
Plugging \eqref{rat-conv-L*-gen-b} in this inequality yields the lower-bound of \eqref{rat-conv-L*-gen-obj}. 

For the upper-bound, we will argue as in the proof of Theorem~\ref{ACFR,rescale}\ref{theoACFRrescale:itema} by considering $\lambda^\star$ as a free variable in $\cZ$. By assumption, we have $F$ is bounded from below. This together with \eqref{rat-conv-L*-gen-b} implies that $\cE$ is also bounded from below, and we denote $\underline{\cE}$ this lower-bound. Define $\tilde{\cE}(t) = \cE(t) - \underline{\cE}$ if $\underline{\cE}$ is negative and $\tilde{\cE}(t) = \cE(t)$ otherwise. Thus, from \eqref{eq:Eode}, it is easy to see that $\tilde{\cE}$ verifies
\begin{equation}\label{eq:Etode}
\dfrac{d}{dt}\tilde{\cE} \leq \dfrac{\frac{d}{dt}\pa{\alpha^2 \sigma^2 b}-\alpha \sigma ^2 b}{\alpha^2 \sigma^2 b}\tilde{\cE} .
\end{equation}
Integrating \eqref{eq:Etode} and arguing with the sign of $\underline{\cE}$, we get the existence of a positive constant $C$ such that, for all $t\geq t_0$
\[
\cE(t) \leq \tilde{\cE}(t) \leq C \alpha(t)^2 \sigma (t)^2 b(t) \exp\pa{-\int_{t_0}^t\frac{1}{\alpha(s)}ds} .
\]
Dropping the quadratic terms in $\cE$, this yields
\begin{align*}
F(x(t),y(t))-F^\star + \dotp{\lambda^\star}{Ax(t)+By(t)-c} \leq C \exp\pa{-\int_{t_0}^t\frac{1}{\alpha(s)}ds} .
\end{align*}
When $Ax(t)+By(t)-c = 0$, we are done by taking, \eg $\lambda^\star = 0$. Assume now that $Ax(t)+By(t)-c \neq 0$ and choose
\[
\lambda^\star =  \frac{Ax(t)+By(t)-c}{\anorm{Ax(t)+By(t)-c}} .
\]
We arrive at
\begin{align*}
F(x(t),y(t))-F^\star \leq F(x(t),y(t))-F^\star + \anorm{Ax(t)+By(t)-c} \leq C \exp\pa{-\int_{t_0}^t\frac{1}{\alpha(s)}ds} ,
\end{align*}
which completes the proof.
\qed
\end{proof}

\begin{remark}\label{rem:Lyap_gen}
Though the rates in Theorem~\ref{ACFR,rescale} and Theorem~\ref{Lyap_gen} look apparently different, it turns out that as expected, those of Theorem~\ref{ACFR,rescale} are actually a specialisation of those in Theorem~\ref{Lyap_gen} when \ref{cond:G4+} holds as an equality, \ie \ref{cond:G4} is verified.
To see this, it is sufficient to realize that, with the notation $a(t) \eqdef \alpha(t)^2 \sigma(t)^2 b(t)$, \ref{cond:G4} is equivalent to $\dot{a}(t) = \frac{1}{\alpha (t)}a(t)$. Upon integration, we obtain $a(t) = \exp\left(\int_{t_0}^t\frac{1}{\alpha(s)} ds\right) $, or equivalently
\[
\frac{1}{\alpha(t)^2 \sigma(t)^2 b(t)}= \exp\left(-\int_{t_0}^t\frac{1}{\alpha(s)} ds\right).
\]
\end{remark}

\section{Well-posedness of \eqref{eq:trials}}\label{Cauchy-problem}

In this section, we will show existence and uniqueness of a strong global solution to the Cauchy problem associated with~\eqref{eq:trials}. The main idea is to formulate \eqref{eq:trials} in the phase space as a non-autonomous first-order system. In the  smooth case, we will invoke the non-autonomous Cauchy-Lipschitz theorem \cite[Proposition~6.2.1]{haraux91}. In the non-smooth case, we will use a standard Moreau-Yosida smoothing argument.

\if
{
\begin{lemma}\label{haraux}\mbox{\rm (\cite[Prop. 6.2.1]{haraux91})} 
Let  $G:I\times\mathcal Z\rightarrow \mathcal Z$ where  $I=[t_0,+\infty[$ and $\mathcal Z$ is a Banach space. Assume that 

\smallskip

\, (i) for every $z\in \mathcal Z$, $G(\cdot,z)\in L^1_{loc}(I, \mathcal Z)$;

\smallskip

\, (ii)  for a.e. $t\in I$, for every $z_1, \, z_2\in  \mathcal Z$,
$$
\| G(t,z_1)-G(t,z_2)\|\leq K(t,\|z_1\|+\|z_2\|)\|z_1-z_2\|, \text{ where } K(\cdot, r)\in L^1_{loc}(I), \forall r\in\mathbb R_+;
$$

\, (iii) for a.e. $t\in I$,  for every $z\in  \mathcal Z$,
$$
\| G(t,z)\|\leq P(t)(1+\| z \|),\text{ where } P\in L^1_{loc}(I).
$$
Then, for every $s\in I, z\in  \mathcal Z$, there exists a unique solution $u_{s,z}\in W^{1,1}_{loc}(I,\mathcal Z)$ of the Cauchy problem:
\begin{center}
$\dot u_{s,z}(t)=G(t,u_{s,z}(t))$ for a.e. $t\in I$, and $u_{s,z}(s)=z$.
\end{center}
\end{lemma}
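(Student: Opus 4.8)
The plan is to run the classical Picard--Lindel\"of fixed-point scheme in the Carath\'eodory framework, the only nonstandard feature being that $G$ depends on $t$ merely through $L^1_{loc}$ data. First I would pass to the equivalent integral formulation: a function $u$ solves $\dot u = G(t,u)$, $u(s)=z$ in $W^{1,1}_{loc}(I,\mathcal Z)$ if and only if it is a continuous fixed point of the operator
\[
(\Phi u)(t) \eqdef z + \int_s^t G(\tau,u(\tau))\,d\tau .
\]
By (i) and (iii), $\tau \mapsto G(\tau,u(\tau))$ is locally integrable for every continuous $u$, so $\Phi$ maps $C([s,s+T];\mathcal Z)$ into itself for each $T>0$, and any fixed point automatically belongs to $W^{1,1}_{loc}$ because its derivative equals $G(\cdot,u)\in L^1_{loc}$.

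For local existence and uniqueness I would fix $R>0$ and restrict $\Phi$ to the closed ball $B=\{u:\ \sup_t\norm{u(t)-z}\le R\}$; on $B$ all arguments stay within norm $\norm{z}+R$, so (ii) yields
\[
\norm{(\Phi u_1)(t)-(\Phi u_2)(t)} \le \int_s^t K\big(\tau,2(\norm{z}+R)\big)\,\norm{u_1(\tau)-u_2(\tau)}\,d\tau .
\]
Condition (iii) makes $\Phi$ map $B$ into itself once $T$ is small. Rather than further shrinking $T$ to gain a contraction, I would equip $C([s,s+T];\mathcal Z)$ with the Bielecki-type weighted norm $\norm{u}_\ast\eqdef\sup_t e^{-2\int_s^t K(\tau,2(\norm{z}+R))\,d\tau}\norm{u(t)}$, equivalent to the sup-norm since the modulus is integrable; a direct computation then shows $\norm{\Phi u_1-\Phi u_2}_\ast\le\frac12\norm{u_1-u_2}_\ast$. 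Banach's fixed-point theorem delivers a unique local solution, which is the desired $u_{s,z}$ on a first interval.

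The crux is promoting this to a global solution on all of $I$, and it is here that (iii) is decisive. Along any solution, (iii) and Gr\"onwall's inequality in integral form give the a priori bound
\[
\norm{u(t)} \le (1+\norm{z})\exp\!\Big(\int_s^t P(\sigma)\,d\sigma\Big)-1 ,
\]
which is finite on every compact subinterval of $I$. Consider the maximal solution on $[s,T^\ast)$: were $T^\ast<+\infty$, this bound would confine $u$ to a fixed ball, whence $G(\cdot,u(\cdot))\in L^1([s,T^\ast))$ by (iii), so $u$ would admit a limit at $T^\ast$ and could be continued by the local theory, contradicting maximality; hence $T^\ast=+\infty$. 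Global uniqueness follows from the same Gr\"onwall estimate applied to the difference of two solutions. The one genuine obstacle is the merely integrable time dependence: both the contraction and the no-blow-up arguments must be phrased with the \emph{integrated} modulus $\int_s^t K(\tau,\cdot)\,d\tau$ rather than a pointwise Lipschitz constant, which is precisely what the Bielecki norm and the integral form of Gr\"onwall's lemma are designed to accommodate; the remaining steps are routine.
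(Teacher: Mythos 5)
Your proposal cannot be checked against an internal proof because the paper does not contain one: this lemma is imported verbatim from Haraux's monograph (\cite[Proposition~6.2.1]{haraux91}) and is used as a black box in the well-posedness arguments of Section~\ref{Cauchy-problem}. Measured against the classical argument, your reconstruction is the standard and correct route: the integral reformulation, a Picard fixed point in a Bielecki-weighted norm (which is exactly the right device when the Lipschitz modulus is only $L^1_{loc}$ in time rather than bounded), and continuation to a global solution via the Gr\"onwall bound supplied by (iii). The contraction computation, the a priori estimate $\|u(t)\|\le(1+\|z\|)\exp\bigl(\int_s^t P(\sigma)\,d\sigma\bigr)-1$, and the continuation argument at a finite maximal time $T^\ast$ (where $[s,T^\ast]$ is a compact subinterval of $I$, so $P$ and the relevant Lipschitz modulus are integrable there and $u$ has a limit at $T^\ast$) are all sound.

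Two loose ends should be tied up. First, when you bound $K\bigl(\tau,\|u_1(\tau)\|+\|u_2(\tau)\|\bigr)$ by $K\bigl(\tau,2(\|z\|+R)\bigr)$ you implicitly assume that $K(t,\cdot)$ is nondecreasing; hypothesis (ii) as stated does not say this, and without it the displayed contraction estimate is not justified (one cannot even guarantee measurability or integrability of $\tau\mapsto K(\tau,\|u_1(\tau)\|+\|u_2(\tau)\|)$). The intended reading, which you should state explicitly, is that $K(t,r)$ is a Lipschitz constant valid for \emph{all} pairs with $\|z_1\|+\|z_2\|\le r$; with that reading your estimate is immediate. Second, the lemma asserts a solution on all of $I=[t_0,+\infty[$ with data prescribed at an arbitrary $s\in I$, so when $s>t_0$ you must also solve backward on $[t_0,s]$; your construction only covers $[s,+\infty[$. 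The fix is one sentence (apply the same local theory and Gr\"onwall bound to the time-reversed field $\tilde G(t,z)=-G(2s-t,z)$ on the compact interval $[t_0,s]$, where no blow-up question even arises), but as written the proof produces only the forward half of $u_{s,z}\in W^{1,1}_{loc}(I,\mathcal Z)$. With these two repairs the argument is complete and coincides with the standard proof behind Haraux's proposition.
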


}
\fi

\subsection{Case of globally Lipschitz continuous gradients}
We consider first the case where the gradients of $f$ and $g$ are globally Lipschitz continuous over $\cX$ and $\cY$. Let us start by recalling the notion of strong solution.
\begin{definition}\label{def:strongsol}
Denote $\cH \eqdef \cX \times \cY \times \cZ$ equipped with the corresponding product space structure, and $w: t \in [t_0,+\infty[ \mapsto (x(t),y(t),\lambda(t)) \in \cH$. The function $w$ is a strong global solution of the dynamical system \eqref{eq:trials} if it satisfies the following properties:
\begin{enumerate}[label=$\bullet$]
\item $w$ is in $\cC^1([0,+\infty[;\cH)$;
\item $w$ and $\dot w$ are absolutely continuous on every compact subset of the interior of $[t_0,+\infty[$ (hence almost everywhere differentiable);
\item for almost all $t \in [t_0,+\infty[$, \eqref{eq:trials} holds with $w(t_0) = (x_0,y_0,\lambda_0)$ and $\dot w(t_0) = (u_0,v_0,\nu_0))$.
\end{enumerate}
\end{definition}

\begin{theorem}\label{thm:wellglobal}
Suppose that \eqref{eq:HP} holds\footnote{Actually, convexity is not needed here.} and, moreover, that $\nabla f$ and $\nabla g$ are Lipschitz continuous, respectively over $\cX$ and $\cY$. Assume  that $\gamma, \,  \alpha,  \, b: [t_0, +\infty[ \to \R^+$  are non-negative  continuous functions. Then, for any given initial condition $(x(t_0), \dot{x}(t_0))=(x_0,\dot x_0)\in \cX \times \cX$,  $(y(t_0), \dot{y}(t_0))=(y_0,\dot y_0)\in \cY \times \cY$, $(\lambda(t_0), \dot{\lambda}(t_0))=(\lambda_0,\dot \lambda_0)\in \cZ \times \cZ$, the evolution system \eqref{eq:trials} has a unique strong global solution.
\end{theorem}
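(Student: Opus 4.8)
The plan is to recast the second-order system \eqref{eq:trials} as a first-order system in the phase space and invoke the non-autonomous Cauchy-Lipschitz theorem. First I would introduce the phase variable $Z(t) \eqdef (w(t), \dot w(t)) \in \cH \times \cH$, where $w = (x,y,\lambda)$ and $\cH = \cX \times \cY \times \cZ$. The system \eqref{eq:trials} then takes the form $\dot Z(t) = G(t, Z(t))$, where $G$ is read off directly from the equations: the first block of $G$ is simply the velocity component $\dot w$, and the second block collects the right-hand sides $-\gamma(t)\dot w - b(t) K_{\mu,\alpha}(w,\dot w)$ built from the partial gradients of $\cL_\mu$ evaluated at the extrapolated points. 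Writing $G$ out explicitly, the only genuinely nonlinear pieces are $\nabla f(x)$ and $\nabla g(y)$; the remaining coupling terms are affine in $(w,\dot w)$ through the continuous linear operators $A$, $B$, $A^\top$, $B^\top$.

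Next I would verify the three hypotheses of the Cauchy-Lipschitz theorem \cite[Proposition~6.2.1]{haraux91}. Measurability/local integrability in $t$ (hypothesis (i)) is immediate since $\gamma,\alpha,b$ are continuous and $G(\cdot, Z)$ is a continuous function of $t$ for each fixed $Z$. For the Lipschitz-in-$Z$ property (hypothesis (ii)), I would fix a compact time interval $[t_0,T]$ and bound $\norm{G(t,Z_1)-G(t,Z_2)}$: the velocity block is an isometry, the affine terms contribute operator-norm constants $\norm{A}$, $\norm{B}$, and the $\nabla f$, $\nabla g$ terms contribute their global Lipschitz constants. Since $\gamma,\alpha,b$ are bounded on $[t_0,T]$ by continuity, one obtains a Lipschitz constant depending only on $T$, which is in fact uniform in the norms of $Z_1,Z_2$ — even stronger than the growth-dependent bound $K(t, \norm{Z_1}+\norm{Z_2})$ that the theorem permits. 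The linear-growth condition (hypothesis (iii)) follows by the same estimates applied to $\norm{G(t,Z)}$, using $\norm{\nabla f(x)} \leq \norm{\nabla f(0)} + \mathrm{Lip}(\nabla f)\norm{x}$ and similarly for $g$, yielding $\norm{G(t,Z)} \leq P(t)(1+\norm{Z})$ with $P$ continuous, hence locally integrable.

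Once the three hypotheses are checked, the theorem delivers a unique solution $Z \in W^{1,1}_{loc}([t_0,+\infty[; \cH\times\cH)$, and unwinding the phase-space reformulation gives a strong global solution $w$ of \eqref{eq:trials} in the sense of Definition~\ref{def:strongsol}, with the prescribed initial data. The point worth emphasizing is why globality comes for free here: because $\nabla f$ and $\nabla g$ are \emph{globally} Lipschitz, the linear-growth bound (iii) holds on all of $[t_0,+\infty[$, so the maximal solution cannot blow up in finite time and is automatically defined on the whole half-line. I expect the main (though still routine) obstacle to be the careful bookkeeping of the extrapolation terms in $K_{\mu,\alpha}$: since $\lambda + \alpha(t)\dot\lambda$ and $x+\alpha(t)\dot x$, $y+\alpha(t)\dot y$ mix position and velocity, these couplings genuinely involve the full phase variable $Z$ rather than $w$ alone, and one must make sure the $\alpha(t)$-weighted velocity contributions are absorbed into the Lipschitz and growth constants on each compact $[t_0,T]$ — which they are, precisely because $\alpha$ is continuous and hence bounded there. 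This is the step where the contrast with the merely locally-Lipschitz case (treated separately in Section~\ref{Cauchy-problem}, where global existence additionally requires the energy estimates from the Lyapunov analysis) becomes transparent.
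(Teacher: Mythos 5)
Your proposal is correct and follows essentially the same route as the paper: phase-space reformulation $Z=(w,\dot w)$, verification of the Lipschitz and linear-growth hypotheses of the non-autonomous Cauchy--Lipschitz theorem \cite[Proposition~6.2.1]{haraux91} with locally integrable time-dependent constants (the paper's explicit choice being $\beta(t)=1+\gamma(t)+b(t)(1+\alpha(t))$, which absorbs the extrapolation terms exactly as you describe), and the conclusion that the $W^{1,1}_{loc}$ solution is a strong global solution of \eqref{eq:trials}. The only cosmetic difference is that the paper explicitly invokes \cite[Corollary~A.2]{Bre1} to pass from the $W^{1,1}_{loc}$ solution of the first-order system to a strong solution in the sense of Definition~\ref{def:strongsol}, a step you compress into ``unwinding the phase-space reformulation.''
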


\begin{proof} 
Recall the notations of Definition~\ref{def:strongsol}. Let $I = [t_0,+\infty[$ and let $Z: t \in I \mapsto (w(t),\dot w(t)) \in \cH^2$. \eqref{eq:trials} can be equivalently written as the Cauchy problem on $\cH^2$
\begin{equation}\label{syst1g}
\begin{cases}
\dot Z(t) + G(t,Z(t)) = 0 & \text{ for } t \in I, \\
Z(t_0)=Z_0 ,
\end{cases}
\end{equation}
where $Z_0=(x_0,y_0,\lambda_0,u_0,v_0,\nu_0)$, and $G: I \times \cH^2 \to \cH^2$ is the operator
\begin{equation}\label{def:G}
G(t, (x,y,\lambda),(u,v,\nu))= 
\begin{pmatrix}
-u \\
-v \\
-\nu  \\
\gamma(t)u + b(t)\bpa{\nabla f(x) + A^\top\pa{\lambda+\alpha(t)\nu + \mu (Ax+By-c)}} \\
\gamma(t)v + b(t)\bpa{\nabla g (y) + B^\top\pa{\lambda+\alpha(t)\nu + \mu (Ax+By-c)}} \\
\gamma(t)\nu - b(t)\bpa{A(x + \alpha(t)u)) + B(y + \alpha(t)v) - c}
\end{pmatrix} .
\end{equation}
To invoke \cite[Proposition~6.2.1]{haraux91}, it is sufficient to check that for a.e. $t \in I$, $G(t,\cdot)$ is $\beta(t)$-Lipschitz continuous with $\beta(\cdot) \in L^1_{loc}(I)$, and for a.e. $t \in I$, $G(t,Z) = \cO(P(t)(1+\anorm{Z})$, $\forall Z \in \cH^2$, with $P(\cdot) \in L^1_{loc}(I)$. Since $\nabla f$, $\nabla g$ are globally Lipschitz continuous, and $A$ and $B$ are bounded linear, elementary computation shows that there exists a constant $C > 0$ such that
\[
\anorm{G(t,Z) - G(t,\bar Z)} \leq C \beta(t) \anorm{Z - \bar{Z}} , \quad \beta(t) = 1+\gamma(t)+b(t)(1+\alpha(t)) .
\]
Owing to the continuity of the parameters $\gamma (\cdot)$, $\alpha (\cdot)$, $b(\cdot)$, $\beta(\cdot)$ is integrable on $[t_0,T]$ for all $t_0 < T < +\infty$. Similar calculation shows that
\[
\anorm{G(t,(x,y,\lambda),(u,v,\nu))} \leq C \beta(t) \bpa{\anorm{\pa{\nabla f(u),\nabla g(v)}}+\anorm{\pa{x,y,\lambda,u,v,\nu}}} ,
\]
and we conclude similarly. It then follows from \cite[Proposition~6.2.1]{haraux91} that there exists a unique global solution $Z(\cdot) \in W^{1,1}_{loc}(I;\cH^2)$ of \eqref{syst1g} satisfying the initial condition $Z(t_0)=Z_0$, and thus, by \cite[Corollary~A.2]{Bre1} that $Z(\cdot)$ is a strong global solution to \eqref{syst1g}. This in turn leads to the existence and uniqueness of a strong solution $(x(\cdot),y(\cdot),\lambda(\cdot))$ of \eqref{eq:trials}. \qed
\end{proof}

\begin{remark} 
One sees from the proof that for the above result to hold, it is only sufficient to assume that the parameters $\gamma$, $\alpha$, $b$ are locally integrable instead of continuous. In addition, in the above results, we even have existence and uniqueness of a classical solution.
\end{remark}


\subsection{Case of locally Lipschitz continuous gradients}
Under local Lipschitz continuity assumptions on the gradients $\nabla f$ and  $\nabla g$, the operator $Z \mapsto G(t,Z)$ defined in \eqref{def:G} is only Lipschitz continuous over the bounded subsets of $\cH^2$.
As a consequence, the Cauchy-Lipschitz theorem  provides the existence and uniqueness of a local solution. To pass from a local solution to a global solution, we will rely on the estimates established in Theorem~\ref{ACFR_rescale_boundedness}.

\begin{theorem}\label{thm:welllocal}
Suppose that \eqref{eq:HP} holds\footnote{Again, convexity is superfluous here.} and, moreover, that $\nabla f$ and $\nabla g$ are Lipschitz continuous over the bounded subsets of respectively $\cX$ and $\cY$. Assume  that $\gamma, \,  \alpha,  \, b: [t_0, +\infty[ \to \R^+$  are non-negative continuous functions such that the conditions \ref{cond:G1+}, \ref{cond:G2}, \ref{cond:G3}, \ref{cond:G4} and \ref{cond:G5} are satisfied, and that $\sup_{t \geq t_0} \sigma(t) < +\infty$. Then, for any initial condition $(x(t_0), \dot{x}(t_0))=(x_0,\dot x_0)\in \cX \times \cX$,  $(y(t_0), \dot{y}(t_0))=(y_0,\dot y_0)\in \cY \times \cY$, $(\lambda(t_0), \dot{\lambda}(t_0))=(\lambda_0,\dot \lambda_0)\in \cZ \times \cZ$, the evolution system \eqref{eq:trials} has a unique strong global solution.
\end{theorem}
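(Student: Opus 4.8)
The plan is to combine the local Cauchy--Lipschitz theory with the a priori bounds of Theorem~\ref{ACFR_rescale_boundedness} through a standard non-extendability (blow-up) argument. As in the proof of Theorem~\ref{thm:wellglobal}, I would first recast \eqref{eq:trials} as the first-order Cauchy problem \eqref{syst1g} on the phase space $\cH^2$, with $G$ given by \eqref{def:G}. Under the present hypothesis that $\nabla f$ and $\nabla g$ are Lipschitz on bounded subsets, the same elementary computations as before show that for every $T \in (t_0,+\infty)$ and every bounded set $\cD \subset \cH^2$, the map $G(t,\cdot)$ is Lipschitz on $\cD$ with a constant that is integrable in $t$ over $[t_0,T]$. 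The classical (local) Cauchy--Lipschitz theorem then yields a unique maximal strong solution $Z=(w,\dot w)$ defined on a maximal interval $[t_0,T_{\max})$, together with the blow-up alternative: either $T_{\max}=+\infty$, or $T_{\max}<+\infty$ and $\limsup_{t\to T_{\max}^-}\norm{Z(t)} = +\infty$.

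The crux is to rule out the second branch using the energy estimates. The key observation is that the Lyapunov computation underlying Theorems~\ref{ACFR,rescale} and \ref{ACFR_rescale_boundedness} is purely local: the inequality $\frac{d}{dt}\cE(t) \leq 0$ is obtained by differentiating $\cE$ along the trajectory and invoking the constitutive equations of \eqref{eq:trials} pointwise, so it is valid for every $t$ in the existence interval $[t_0,T_{\max})$ and never presupposes global existence. Consequently $\cE(t) \leq \cE(t_0)$ throughout $[t_0,T_{\max})$, and the derivation of Theorem~\ref{ACFR_rescale_boundedness} applies verbatim on this interval. Since the standing hypotheses of Theorem~\ref{thm:welllocal} are precisely \ref{cond:G1+}, \ref{cond:G2}, \ref{cond:G3}, \ref{cond:G4}, \ref{cond:G5} together with $\sup_{t\geq t_0}\sigma(t)<+\infty$, they supply exactly what Theorem~\ref{ACFR_rescale_boundedness}\ref{ACFR_rescale_boundedness:itemii} requires, whence
\[
\sup_{t\in[t_0,T_{\max})}\norm{w(t)} < +\infty \qandq \sup_{t\in[t_0,T_{\max})}\norm{\dot w(t)} < +\infty .
\]

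Suppose now, for contradiction, that $T_{\max}<+\infty$. Then $[t_0,T_{\max})$ is a bounded interval, and the two displayed bounds give $\sup_{t\in[t_0,T_{\max})}\norm{Z(t)} < +\infty$, contradicting the blow-up alternative. Hence $T_{\max}=+\infty$, so $Z$, and therefore $(x,y,\lambda)$, is a global strong solution; uniqueness is inherited from the local uniqueness provided by the Cauchy--Lipschitz theorem. I expect the only genuinely delicate point to be the justification that the a priori estimates apply to the \emph{maximal local} solution rather than to a postulated global one, i.e. that the entire Lyapunov machinery is local in time, and the verification that the velocity bound stays finite up to $T_{\max}$. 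This last point rests on \ref{cond:G5} ($\inf_t\alpha(t)>0$) together with $\inf_t\xi(t)>0$ (equivalent to \ref{cond:G1+}) and $\sup_t\sigma(t)<+\infty$, so that the factor $1/(\alpha\sigma)$ controlling $\norm{\dot w}$ remains bounded, the positivity of $\sigma$ being forced by $\xi=\sigma\big(\sigma(\gamma\alpha-\dot\alpha-1)-2\alpha\dot\sigma\big)$ being bounded away from $0$.
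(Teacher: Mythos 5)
Your proposal is correct and follows essentially the same route as the paper: recast \eqref{eq:trials} as a first-order system in phase space, take the maximal solution provided by the local Cauchy--Lipschitz theorem, and use the a priori bounds of Theorem~\ref{ACFR_rescale_boundedness} (which, as you rightly stress, are valid on the maximal interval because the Lyapunov analysis is purely local in time) to preclude finite-time blow-up. The only cosmetic difference is that you invoke the blow-up alternative where the paper argues via the Cauchy criterion that a bounded solution can be extended past a finite terminal time --- these are two packagings of the same continuation argument.
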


\begin{proof}
We use the same notation as in the proof of Theorem~\ref{thm:wellglobal}. Let us consider the maximal solution of the Cauchy problem \eqref{syst1g}, say $Z : [t_0, T[ \to \cH^2$. We have to prove that $T=+\infty$.
Following a classical argument, we argue by contradiction, and suppose that $T < +\infty$. It is then sufficient to prove that the limit of $Z(t)$ exists as $t \to T$, so that it will be possible to extend $Z$ locally to the right of $T$ thus getting a contradiction. According to the Cauchy criterion, and the constitutive equation $\dot Z(t) = G(t,Z(t))$, it is sufficient to prove that $Z(t)$ is bounded over $[t_0,T[$. At this point, we use the estimates provided by Theorem~\ref{ACFR_rescale_boundedness}, which gives precisely this result under the conditions imposed on the parameters. \qed
\end{proof}

\subsection{The non-smooth case}
For a large number of applications (\eg data processing, machine learning, statistics), non-smooth functions are ubiquitous. To cover these practical situations, we need to consider the case where the functions $f$ and $g$ are non-smooth. In order to adapt the dynamic \eqref{eq:trials} to this non-smooth situation, we will consider the corresponding differential inclusion
\begin{equation}
\begin{cases}
\ddot x+\gamma (t) \dot x + b(t) \bpa{\partial f(x) +
A^\top \brac{\lambda + \alpha(t) \dot\lambda 
+ \mu (Ax+By-c)}} &\ni 0 \\
\ddot y+\gamma (t)\dot y + b(t)\bpa{\partial g(y) +
B^\top \brac{\lambda + \alpha(t) \dot\lambda 
+ \mu (Ax+By-c)}} &\ni 0 \\
\ddot \lambda+\gamma (t)\dot \lambda - b(t)
\bpa{A(x + \alpha(t)\dot x) + B(y + \alpha(t)\dot y) -c}  &= 0\\
(x(t_0),y(t_0),\lambda(t_0)) = (x_0,y_0,\lambda_0) \qandq \\
(\dot x(t_0),\dot y(t_0),\dot \lambda(t_0)) = (u_0,v_0,\nu_0) ,
\end{cases}\label{eq:trialsnonsmooth}
\end{equation}
where $\partial f$ and $\partial g$ are the subdifferentials of $f$ and $g$, respectively. Beyond global existence issues that we will address shortly, one may wonder whether our Lyapunov analysis in the previous sections is still valid in this case. The answer is affirmative provided one takes some care in two main steps that are central in our analysis. First, when taking the time-derivative of the Lyapunov, one has to invoke now the (generalized) chain rule for derivatives over curves (see \cite{Bre1}). The second ingredient is the validity of the subdifferential inequality for convex functions. In turn all our results and estimates presented in the previous sections can be transposed to this more general non-smooth context. Indeed, our approximation scheme that  we will present shortly turns out to be monotonically increasing. This  gives a variational convergence (epi-convergence) which allows to simply pass to the limit over the estimates established in the smooth case.

Let us now turn to the existence of a global solution to \eqref{eq:trialsnonsmooth}. We will again consider strong solutions to this problem, \ie solutions that are $\cC^1([t_0,+\infty[;\cH)$, locally absolutely continuous, and \eqref{eq:trialsnonsmooth} holds almost everywhere on $[t_0,+\infty[$. A natural idea is to use the Moreau-Yosida regularization in order to bring the problem to the smooth case before passing to an appropriate limit. Recall that, for any $\theta > 0$, the Moreau envelopes $f_{\theta}$ and $g_{\theta}$ of $f$ and $g$ are defined respectively by
\[
f_{\theta} (x)= \min_{\xi \in \cX} \bra{f(\xi) + \frac{1}{2 \theta} \anorm{x-\xi}^2}, \quad 
g_{\theta} (y)= \min_{\eta \in\cY} \bra{g(\eta)+ \frac{1}{2 \theta} \anorm{y-\eta}^2}.
\]
As a classical result, $f_{\theta}$ and $g_{\theta}$ are continuously differentiable and their gradients are $\frac{1}{\theta}$-Lipschitz continuous. We are then led to consider, for each $\theta >0$, the dynamical system
\begin{equation}
\begin{cases}
\ddot x_\theta +\gamma (t) \dot x_\theta + b(t) \bpa{\nabla f_{\theta} (x_\theta) +
A^\top \brac{\lambda_\theta + \alpha(t) \dot\lambda_\theta 
+ \mu (Ax_\theta+By_\theta-c)}} &=0 \\
\ddot y_\theta+\gamma (t)\dot y_\theta + b(t)\bpa{\nabla g_{\theta} (y_\theta) +
B^\top \brac{\lambda_\theta + \alpha(t) \dot\lambda_\theta 
+ \mu (Ax_\theta+By_\theta-c)}}  &=0 \\
\ddot \lambda_\theta+\gamma (t)\dot \lambda_\theta - b(t)
\bpa{A(x_\theta + \alpha(t)\dot x_\theta) + B(y_\theta + \alpha(t)\dot y_\theta) -c} &= 0 .
\end{cases}\label{eq:trialssmoothed}
\end{equation}
The system \eqref{eq:trialssmoothed} comes under our previous study, and for which we have existence and uniqueness of a strong global solution. In doing so, we generate a filtered sequence $(x_{\theta},y_{\theta},\lambda_{\theta})_{\theta}$ of trajectories.\\

The challenging question is now to pass to the limit in the system above as $ \theta \to 0^+$.
This is a non-trivial problem, and to answer it, we have to assume that the spaces $\cX$, $\cY$ and $\cZ$ are finite dimensional, and that $f$ and $g$ are convex real-valued (\ie $\dom(f) = \cX$ and $\dom(g)=\cY$ in which case $f$ and $g$ are continuous). Recall that $\partial F(x,y) = \partial f(x) \times \partial g(y)$, and denote $\brac{\partial F(x,y)}^0$ the minimal norm selection of $\partial F(x,y)$.

\begin{theorem}\label{thm:wellnonsmooth}
Suppose that $\cX$, $\cY$, $\cZ$ are finite dimensional Hilbert spaces, and that the functions $f: \cX \to \R $ and  $g: \cY \to \R$ are convex. Assume that 
\begin{enumerate}[label=(\roman*)]
\item $F$ is coercive on the affine feasibility set; \label{cond:coer}
\item $\beta_F \eqdef \sup_{(x,y) \in \cX \times \cY} \anorm{\brac{\partial F(x,y)}^0}< +\infty$; \label{cond:subdiffbnd}
\item the linear operator $L = [A ~ B]$ is surjective. \label{cond:Lsurj}
\end{enumerate}
Suppose also that $\gamma, \,  \alpha,  \, b: [t_0, +\infty[ \to \R^+$ are non-negative continuous functions such that the conditions \ref{cond:G1+}, \ref{cond:G2}, \ref{cond:G3} \ref{cond:G4} and \ref{cond:G5} are satisfied, and that $\sup_{t \geq t_0} \sigma(t) < +\infty$. Then, for any initial condition $(x(t_0), \dot{x}(t_0))=(x_0,\dot x_0)\in \cX \times \cX$,  $(y(t_0), \dot{y}(t_0))=(y_0,\dot y_0)\in \cY \times \cY$, $(\lambda(t_0), \dot{\lambda}(t_0))=(\lambda_0,\dot \lambda_0)\in \cZ \times \cZ$, the evolution system \eqref{eq:trialsnonsmooth} admits a strong global solution .
\end{theorem}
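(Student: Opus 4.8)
The plan is to obtain the global solution of the non-smooth inclusion \eqref{eq:trialsnonsmooth} as a limit, as $\theta \to 0^+$, of the strong global solutions $(x_\theta,y_\theta,\lambda_\theta)$ of the smoothed systems \eqref{eq:trialssmoothed}, which exist and are unique by Theorem~\ref{thm:welllocal} since the Moreau gradients $\nabla f_\theta$, $\nabla g_\theta$ are $\tfrac1\theta$-Lipschitz. The crux is to establish \emph{uniform} (in $\theta$) a priori bounds on the filtered trajectories on each compact interval $[t_0,T]$, so that a compactness argument in finite dimension yields a convergent subsequence whose limit solves the inclusion. First, I would record the key properties of the Moreau envelope: $f_\theta \uparrow f$ pointwise as $\theta \downarrow 0$ (monotone, hence epi-convergent in finite dimension), $\nabla f_\theta(x) = \tfrac1\theta(x - \prox_{\theta f}(x)) \in \partial f(\prox_{\theta f}(x))$, and the minimal-selection bound $\anorm{\nabla f_\theta(x)} \leq \anorm{[\partial f(x)]^0}$, so that assumption \ref{cond:subdiffbnd} gives $\sup_\theta \sup_x \anorm{\nabla f_\theta(x)} \le \beta_F$, a bound uniform in both $x$ and $\theta$.

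Next I would derive the uniform energy bounds. The Lyapunov analysis of Theorems~\ref{ACFR,rescale} and~\ref{ACFR_rescale_boundedness} applies verbatim to each smoothed system \eqref{eq:trialssmoothed}, since $f_\theta$, $g_\theta$ are convex and $\cC^1$, and the conditions \ref{cond:G1+}, \ref{cond:G2}, \ref{cond:G3}, \ref{cond:G4}, \ref{cond:G5} together with $\sup_t \sigma(t) < +\infty$ are exactly the hypotheses of Theorem~\ref{ACFR_rescale_boundedness}. The subtlety is that the constant $C$ in those estimates depends on $\cE_\theta(t_0)$, so I must check it is bounded uniformly in $\theta$. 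This follows because the initial data are fixed and $f_\theta(x_0) \to f(x_0)$, $g_\theta(y_0) \to g(y_0)$ monotonically, so the initial energies $\cE_\theta(t_0)$ are uniformly bounded. Consequently $\sup_\theta \sup_{t \in [t_0,T]} \anorm{(x_\theta,y_\theta,\lambda_\theta)(t)} < +\infty$ and $\sup_\theta \sup_{t\in[t_0,T]} \anorm{(\dot x_\theta,\dot y_\theta,\dot\lambda_\theta)(t)} < +\infty$; here coercivity \ref{cond:coer} and surjectivity \ref{cond:Lsurj} of $L=[A~B]$ are what convert the bound on the augmented-Lagrangian gap and feasibility residual into a genuine bound on the primal--dual trajectory. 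From the constitutive equations \eqref{eq:trialssmoothed}, the uniform position/velocity bounds plus the uniform gradient bound $\beta_F$ yield a uniform bound on the accelerations $(\ddot x_\theta,\ddot y_\theta,\ddot\lambda_\theta)$ on $[t_0,T]$.

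With these bounds in hand, I would invoke Arzelà--Ascoli (valid since the spaces are finite dimensional): the velocities are equi-Lipschitz, hence the positions and velocities are equicontinuous and uniformly bounded, so along a subsequence $(x_\theta,y_\theta,\lambda_\theta) \to (x,y,\lambda)$ and $(\dot x_\theta,\dot y_\theta,\dot\lambda_\theta) \to (\dot x,\dot y,\dot\lambda)$ uniformly on $[t_0,T]$, with $(\ddot x_\theta,\ddot y_\theta,\ddot\lambda_\theta)$ converging weakly-$*$ in $L^\infty([t_0,T])$; a diagonal argument extends this to all of $[t_0,+\infty[$. The main obstacle, and the step requiring the most care, is passing to the limit in the subdifferential terms. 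Here I would use that $\prox_{\theta f}(x_\theta(t)) \to x(t)$ (since $\anorm{\prox_{\theta f}(x_\theta) - x_\theta} = \theta \anorm{\nabla f_\theta(x_\theta)} \le \theta\beta_F \to 0$), that $\nabla f_\theta(x_\theta(t)) \in \partial f(\prox_{\theta f}(x_\theta(t)))$, and the strong--weak closedness of the maximal monotone graph of $\partial F$. Combined with the integral form of the equations and uniform convergence, this lets me identify the weak-$*$ limits of the gradient terms as elements of $\partial f(x(t))$, $\partial g(y(t))$ for almost every $t$, thereby showing the limit $(x,y,\lambda)$ is a strong global solution of \eqref{eq:trialsnonsmooth} satisfying the prescribed initial conditions. (Uniqueness is not claimed in the statement and need not be addressed.)
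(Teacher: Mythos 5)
Your overall architecture is the same as the paper's: solve the smoothed systems \eqref{eq:trialssmoothed} via Theorem~\ref{thm:welllocal}, derive bounds uniform in $\theta$ from the Lyapunov estimates, apply Arzel\`a--Ascoli in finite dimension, and pass to the limit in the subdifferential terms by maximal monotonicity (your variant using $\nabla f_\theta(x_\theta(t)) \in \partial f\bigl(\prox_{\theta f}(x_\theta(t))\bigr)$, $\prox_{\theta f}(x_\theta(t)) \to x(t)$, and strong--weak closedness of the graph of $\partial F$ is a perfectly good concrete rendering of the paper's graph-convergence argument). However, there is a genuine gap at the step you yourself call the crux: the uniform bound on the initial energies $\cE_\theta(t_0)$. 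You justify it by saying the initial data are fixed and $f_\theta(x_0)\to f(x_0)$, $g_\theta(y_0)\to g(y_0)$ monotonically. This is insufficient: $\cE_\theta(t_0)$ is built around the saddle point $(x_\theta^\star,y_\theta^\star,\lambda_\theta^\star)$ of the \emph{smoothed} problem, which moves with $\theta$. Concretely, $\cE_\theta(t_0)$ contains the augmented Lagrangian gap of the smoothed problem (that is, \eqref{eq:auglag} with $F_\theta$ in place of $F$), hence the term $F_\theta(x_0,y_0)-F_\theta(x_\theta^\star,y_\theta^\star)$ and the pairing $\dotp{\lambda_\theta^\star}{Ax_0+By_0-c}$, as well as the distance $\anorm{(x_0,y_0,\lambda_0)-(x_\theta^\star,y_\theta^\star,\lambda_\theta^\star)}^2$. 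None of these is controlled by pointwise convergence of the envelopes at the fixed initial point.

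This is exactly where the structural hypotheses must be put to work, and your proposal misattributes their role: you say coercivity \ref{cond:coer} and surjectivity \ref{cond:Lsurj} ``convert the bound on the augmented-Lagrangian gap and feasibility residual into a genuine bound on the primal--dual trajectory,'' whereas the trajectory bound already follows from Theorem~\ref{ACFR_rescale_boundedness} once the initial energy and the smoothed saddle points are under control. What is actually needed, and what the paper proves, is: (a) $\abs{F_\theta(x_0,y_0)-F_\theta(x_\theta^\star,y_\theta^\star)}$ is bounded uniformly for $\theta\in[0,\bar\theta]$, via $F-\frac{\theta}{2}\anorm{\brac{\partial F}^0}^2 \leq F_\theta \leq F$ and \ref{cond:subdiffbnd}; (b) $\anorm{(x_\theta^\star,y_\theta^\star)}$ is bounded uniformly, via the coercivity \ref{cond:coer} together with $F(x_\theta^\star,y_\theta^\star)\leq F(x^\star,y^\star)+\frac{\beta_F^2\bar\theta}{2}$; and (c) $\anorm{\lambda_\theta^\star}$ is bounded uniformly, which is the least obvious point: one passes to the Fenchel--Rockafellar dual $\min_{\lambda}F_\theta^*(-L^*\lambda)+\dotp{c}{\lambda}$, uses $F_\theta^*=F^*+\frac{\theta}{2}\anorm{\cdot}^2$, coercivity of $F^*$ (because $\dom F$ is all of $\cX\times\cY$), and injectivity of $L^*$ --- this is the actual role of \ref{cond:Lsurj}. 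Your proposal supplies no argument for (c) at all, and without it neither the uniform initial-energy bound nor the uniform trajectory/velocity bounds are established, so the compactness step has no foundation. The remainder of your proof (uniform acceleration bounds from the constitutive equations, Arzel\`a--Ascoli with diagonal extraction, identification of the limit) is correct and matches the paper.
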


Condition~\ref{cond:coer} is natural and ensures for instance that the solution set of \eqref{eq:P} is non-empty. Condition~\ref{cond:Lsurj} is also very mild. A simple case where \ref{cond:subdiffbnd} holds is when $f$ and $g$ are Lipschitz continuous. 

\begin{proof}
The key property is that the estimates obtained in Theorem~\ref{ACFR,rescale} and \ref{ACFR_rescale_boundedness}, when applied to \eqref{eq:trialssmoothed}, have a favorable dependence on $\theta$. Indeed, a careful examination of the estimates shows that $\theta$ enters them through the Lyapunov function at $t_0$ only via $|F_{\theta}(x_0,y_0) - F_{\theta}(x_{\theta}^\star,y_{\theta}^\star)|$ and $\anorm{(x_{\theta}^\star,y_{\theta}^\star,\lambda_{\theta}^\star)}$, where $F_{\theta}(x,y)=f_{\theta}(x)+g_{\theta}(y)$, $(x_{\theta}^\star,y_{\theta}^\star) \in \argmin_{Ax+Bx=c} F_{\theta}(x,y)$ and $\lambda_{\theta}^\star$ is an associated dual multiplier; see \eqref{eq:C0}. With standard properties of the Moreau envelope, see \cite[Chapter~3]{AttouchBook} and \cite[Chapter~12]{BauschkeCombettes}, one can show that for all $(x,y) \in \cX \times \cY$
\[
F(x,y) - \frac{\theta}{2}\anorm{\brac{\partial F(x,y)}^0}^2 \leq F_{\theta}(x,y) \leq F(x,y) .
\]
This, together with the fact that $(x_{\theta}^\star,y_{\theta}^\star) \in \argmin_{Ax+Bx=c} F_{\theta}(x,y)$ and $(x^\star,y^\star) \in \argmin_{Ax+Bx=c} F(x,y)$ yields
\[
F_{\theta}(x_{\theta}^\star,y_{\theta}^\star) \leq F_{\theta}(x^\star,y^\star) \leq F(x^\star,y^\star) \leq F(x_{\theta}^\star,y_{\theta}^\star) .
\]
Thus
\begin{multline*}
F(x_0,y_0) - F(x^\star,y^\star) - \frac{\theta}{2}\anorm{\brac{\partial F(x_0,y_0)}^0}^2 \leq F_{\theta}(x_0,y_0) - F_{\theta}(x_{\theta}^\star,y_{\theta}^\star) \\
\leq F(x_0,y_0) - F(x^\star,y^\star) + \frac{\theta}{2}\anorm{\brac{\partial F(x_{\theta}^\star,y_{\theta}^\star)}^0}^2 .
\end{multline*}
This entails, owing to \ref{cond:subdiffbnd}, that
\[
\abs{F_{\theta}(x_0,y_0) - F_{\theta}(x_{\theta}^\star,y_{\theta}^\star)} \leq \abs{F(x_0,y_0) - F(x^\star,y^\star)} + \frac{\beta_F^2\theta}{2}
\]
and thus, since we are interested in the limit as $\theta \to 0^+$, 
\[
\sup_{\theta \in [0,\bar{\theta}]} \abs{F_{\theta}(x_0,y_0) - F_{\theta}(x_{\theta}^\star,y_{\theta}^\star)} \leq \abs{F(x_0,y_0) - F(x^\star,y^\star)} + \frac{\beta_F^2\bar{\theta}}{2} < +\infty .
\] 
On the other hand, 
\[
F(x_{\theta}^\star,y_{\theta}^\star) \leq F_{\theta}(x_{\theta}^\star,y_{\theta}^\star) + \frac{\beta_F^2\theta}{2} \leq F_{\theta}(x^\star,y^\star) + \frac{\beta_F^2\theta}{2} \leq F(x^\star,y^\star) + \frac{\beta_F^2\bar{\theta}}{2} .
\]
Thus, in view of \ref{cond:coer}, $\exists a > 0$ and $b \in \R$ such  that
\[
a\anorm{(x_{\theta}^\star,y_{\theta}^\star)} + b \leq F(x^\star,y^\star) + \frac{\beta_F^2\bar{\theta}}{2} ,
\]
which shows that 
\[
\sup_{\theta \in [0,\bar{\theta}]} \anorm{(x_{\theta}^\star,y_{\theta}^\star)} < +\infty .
\]
Let us turn to $\lambda_{\theta}^\star$. When $\lambda_{\theta}^\star$ is chosen as in \eqref{eq:lambdaunit}, then we are done. When $\lambda_{\theta}^\star$ is the optimal dual multiplier satisfying \eqref{opt_system}, then it is a solution to the Fenchel-Rockafellar dual problem
\[
\min_{\lambda \in \cZ} F_\theta^*(-L^*\lambda) + \dotp{c}{\lambda} ,
\]
where $F_\theta^*$ is the Legendre-Fenchel conjugate of $F_\theta$. Without loss of generality, we assume $c = 0$. Classical conjugacy results give
\[
F_\theta^*(u) = F^*(u) + \frac{\theta}{2}\norm{u}^2 .
\]
Since $f$ and $g$ are convex and real-valued, the domain of $F$ is full. This is equivalent to coercivity of $F^*$. 
This together with injectivity of $L^*$ (see \ref{cond:Lsurj}), imply that there exists $a > 0$ and $b \in \R$ (potentially different from those above) such that
\[
a\anorm{\lambda_{\theta}^\star} + b \leq F^*(-L^*\lambda_{\theta}^\star) \leq F_{\theta}^*(-L^*\lambda_{\theta}^\star) \leq F_{\theta}^*(-L^*\lambda^\star) \leq F^*(-L^*\lambda^\star) + \frac{\bar{\theta}}{2}\norm{L^*\lambda^\star}^2 < +\infty.
\]
Altogether this shows that
\[
\sup_{\theta \in [0,\bar{\theta}]} \anorm{\lambda_{\theta}^\star} < +\infty .
\]
Combining the above with Theorem~\ref{ACFR_rescale_boundedness}, we conclude that for all $T > t_0$, the trajectories $(x_{\theta}(.),y_{\theta}(.),\lambda_{\theta}(\cdot))$ and the velocities $(\dot x_{\theta}(.),\dot y_{\theta}(.),\dot \lambda_{\theta}(\cdot))$ are bounded in $L^2(t_0,T;\cX \times \cY)$ uniformly in $\theta$. Since $\cX$ and $\cY$ are finite dimensional spaces, we deduce by the Ascoli-Arzel\`{a} theorem, that the trajectories are relatively compact for the uniform convergence over the bounded time intervals. By properties of the Moreau envelope, we also have, for all $(x,y) \in \cX \times \cY$,
\[
\anorm{\nabla F_{\theta}(x,y)} \nearrow \anorm{\brac{\partial F(x,y)}^0} \text{ as } \theta \searrow 0,
\]
and thus
\[
\anorm{\nabla F_{\theta}(x,y)} \leq \beta_F .
\]
Using this and the boundedness assertions of the trajectories and velocities proved above in the constitutive equations \eqref{eq:trialssmoothed}, the acceleration remains also bounded on the bounded time intervals. Passing to the limit as $\theta \to 0^+$ in \eqref{eq:trialssmoothed} is therefore relevant by a classical maximal monotonicity argument. Indeed, we work with the canonical extension of the maximally monotone operators $\nabla F_\theta$ and $\partial F$ to $L^2(t_0,T, \cX \times \cY)$, and, in this functional setting, we use that $\nabla F_\theta$ graph converges to $\partial F$ in the strong-weak topology. \qed
\end{proof}

We conclude this section by noting that at this stage, uniqueness of the solution to \eqref{eq:trialsnonsmooth} is a difficult open problem. In fact, even existence in infinite dimension and/or with any proper lower semicontinuous convex functions $f$ and $g$ is not clear. This goes far beyond the scope of the present paper and we leave it to a future work.  
 
\if
{
Differentiating $\cE$ with respect to $t$ gives
\begin{equation}\label{der-E-H}
\dfrac{d}{dt}\cE (t)=\dot{\delta}(t) \cF_\mu(w(t))+ \delta (t) \dotp{\nabla \cF_\mu(w(t))}{\dot{w}(t)}+  \left\langle v(t),\dot{v}(t) \right\rangle .
\end{equation}
By definition of $v(t)$, and by using the constitutive equation  \eqref{eq:trials}, we have
 $$\begin{array}{lll}
\dot{v}(t) & = & \gamma_0 \dot w (t) + \beta(t)  \nabla_{\alpha}\cL_\mu(w(t)) + t \Big( \ddot w (t) + \beta(t) \dfrac{d}{dt} \nabla_{\alpha}\cL_\mu(w(t)) + \dot{\beta}(t) \nabla_{\alpha}\cL_\mu(w(t)) \Big)  \\
		&=& \gamma_0 \dot w (t) + \beta(t)  \nabla_{\alpha}\cL_\mu(w(t)) + t \Big( -\frac{\gamma_0}{t}\dot w (t) -b(t)\nabla_{\alpha}\cL_\mu(w(t)) + \dot{\beta}(t) \nabla_{\alpha}\cL_\mu(w(t)) \Big)\\
&=& t\left(\dot{\beta}(t) + \frac{\beta (t)}{t}  -b(t)  \right) \nabla_{\alpha}\cL_\mu(w(t)) .		
\end{array} $$
Elementary computation gives
$$
\nabla_\alpha\cL_\mu(w(t)) = \nabla \cF_\mu(w(t)) + C(t) 
$$
where
\begin{eqnarray*}
C(t)\eqdef\left[
\begin{array}{l} 
A^\top(\lambda (t)-\lambda^\star+\alpha (t) \dot\lambda (t))\vspace{2mm}\\  
B^\top(\lambda (t)-\lambda^\star+\alpha (t)\dot\lambda (t) )\vspace{2mm} \\  
-A(x(t)+\alpha (t)\dot x (t))-B(y (t)+\alpha (t)\dot y (t))+c
\end{array}
\right].
\end{eqnarray*}
Unambiguously, to shorten formulas, we sometimes omit the variable $t$.
According to the above formulas for $ v(t) $ and $ \dot v(t) $, we get
\begin{eqnarray*}
\dotp{v(t)}{\dot{v(t)}} 
	& =& t\left(\dot{\beta} + \frac{\beta}{t}  -b  \right)  \big\langle \nabla_{\alpha}\cL_\mu(w),  (\gamma_0 -1)( w-w^\star)+t\Big( \dot w  + \beta  \nabla_{\alpha}\cL_\mu(w) \Big)   \big\rangle\\
	&=&(\gamma_0 -1) t\left(\dot{\beta} + \frac{\beta}{t}  -b  \right)  \big\langle \nabla_{\alpha}\cL_\mu(w),   w-w^\star  \big\rangle\\
	& + &  t^2\left(\dot{\beta} + \frac{\beta}{t}  -b  \right)  \big\langle \nabla_{\alpha}\cL_\mu(w),   \dot w    \big\rangle \\
&+&	t^2\left(\dot{\beta} + \frac{\beta}{t}  -b  \right)  \beta 
\| \nabla_{\alpha}\cL_\mu(w) \|^2.
\end{eqnarray*}
 \tcb{Let us} insert this expression in \eqref{der-E-H}.  We obtain
\begin{eqnarray*}
\dfrac{d}{dt}\cE (t)&=& \dot{\delta}(t) \cF_\mu(w(t))+ \delta (t) \dotp{\nabla_\alpha\cL_\mu(w))}{\dot{w}(t)}-\delta (t)\dotp{C(t)}{\dot{w}(t)} \\
	&+&(\gamma_0 -1) t\left(\dot{\beta} + \frac{\beta}{t}  -b  \right)  \big\langle \nabla_{\alpha}\cL_\mu(w),   w-w^\star  \big\rangle\\
	& + &  t^2\left(\dot{\beta} + \frac{\beta}{t}  -b  \right)  \big\langle \nabla_{\alpha}\cL_\mu(w),   \dot w    \big\rangle 
+	t^2\left(\dot{\beta} + \frac{\beta}{t}  -b  \right)  \beta 
\| \nabla_{\alpha}\cL_\mu(w) \|^2.
\end{eqnarray*} 
Take
$$
\delta (t) = t^2 \left(b(t) -\dot{\beta}(t) - \frac{\beta (t)}{t} \right) 
$$ 
so that the term $\big\langle\nabla\cF_\mu(w ) , \dot w \big\rangle $  occurs with its opposite, and therefore disappears. 
Moreover we suppose that $\delta (t)$ is nonnegative.
 Thus, the above formula  simplifies to
\begin{eqnarray*}
&&\dfrac{d}{dt}\cE (t) +  \beta (t)\delta (t)
\| \nabla_{\alpha}\cL_\mu(w) \|^2 + (\gamma_0 -1) \frac{\delta (t)}{t} \big\langle \nabla_{\alpha}\cL_\mu(w (t)),   w(t)-w^\star  \big\rangle - \dot{\delta}(t) \cF_\mu(w(t))\\
&&\hspace{2cm} +\delta (t)\dotp{C(t)}{\dot{w}(t)} =0.
\end{eqnarray*} 
By convexity of $\cF_\mu$, we have
\[ 
\cL_\mu(w^\star)- \cL_\mu(w (t)) \geq\dotp{\nabla_{\alpha}\cL_\mu(w (t))}{w^\star-w(t)},
\]
which, by definition of $\cF_\mu$ gives
$$
\big\langle \nabla_{\alpha}\cL_\mu(w (t)),   w(t)-w^\star  \big\rangle \geq \cF_\mu (w(t)). 
$$
Therefore
\begin{eqnarray*}
&&\dfrac{d}{dt}\cE (t) +  \beta (t)\delta (t)
\| \nabla_{\alpha}\cL_\mu(w) \|^2 + \Big((\gamma_0 -1) \frac{\delta (t)}{t} - \dot{\delta}(t)\Big) \cF_\mu(w(t))+\delta (t)\dotp{C(t)}{\dot{w}(t)} =0.
\end{eqnarray*}

On the other hand, a similar computation as in Theorem 1 gives

\[
\begin{array}{lll} \dotp{C(t)}{\dot{w}(t)}&=&
	 \sigma \big\langle \lambda-\lambda^\star+\alpha \dot\lambda \, ,\, Ax-Ax^\star  \big\rangle + \delta   \big\langle\lambda-\lambda^\star+\alpha \dot\lambda   , A\dot x \big\rangle \vspace{1mm}\\
	&&+
	 \sigma \big\langle \lambda-\lambda^\star+\alpha \dot\lambda \, ,\, By-By^\star  \big\rangle + \delta   \big\langle\lambda-\lambda^\star+\alpha \dot\lambda   , B\dot y \big\rangle \vspace{1mm}\\
	&&- 
	 \sigma \langle A(x+\alpha \dot x)+B(y+\alpha \dot y)-c\, ,\, \lambda-\lambda^\star  \rangle \vspace{1mm} \\
	 &&- \delta  \langle A(x+\alpha \dot x)+B(y+\alpha \dot y)-c\,   , \dot \lambda \big\rangle .
\end{array} 
\]
\tcb{Let us} recall that  $(x^\star,y^\star) \in \cX\times\cK $ is a solution  of \eqref{eq:P}. Hence  $A(x^\star) + B(y^\star) =c$.  According to this property, \tcb{Let us} rearrange $\cW$ as follows:
\[
\begin{array}{lll}
\cW&=&
	 \sigma \big\langle \lambda-\lambda^\star+\alpha \dot\lambda \, ,\, Ax+By-c \big\rangle + \delta   \big\langle\lambda-\lambda^\star+\alpha \dot\lambda   , A\dot x + B\dot y\big\rangle \vspace{1mm} \\
	&&- 
	 \sigma \big\langle Ax+By-c\, ,\, \lambda-\lambda^\star  \big\rangle - 
	 \sigma\alpha \big\langle A \dot x+B \dot y\, ,\, \lambda-\lambda^\star  \big\rangle \vspace{1mm} \\
	 &&- \delta   \big\langle Ax+By-c\, ,\dot \lambda \big\rangle 
	 - \delta\alpha   \big\langle A \dot x+B \dot y\,   , \dot \lambda \big\rangle \vspace{1mm} \\
	 &=& (\sigma \alpha - \delta)\Big[ \big\langle Ax+By-c\, ,\dot \lambda \big\rangle -  \big\langle A \dot x+B \dot y\, ,\, \lambda-\lambda^\star  \big\rangle  \Big].
\end{array} 
\]
Since it is difficult to control the sign of the above expression, we are naturally led to make the choice: for all $t\geq t_0$
\begin{equation} \label{basic_choice}
\delta (t)= \sigma(t) \alpha (t), 
\end{equation}
which gives $\cW =0$. 

\tcb{Let us} return to \eqref{basic-Lyap1}. Collecting the above results,
we get
\begin{eqnarray*}
\dfrac{d}{dt}\cE +\left( \delta  b 
	\sigma-\dfrac{d}{dt}(\delta^2b) \right)\cF_\mu(w) 
	 \leq \big(\frac12\dot\xi +  \sigma\dot\sigma \big) \|w-w^\star\|^2  + [\sigma-\dot\delta-\delta\gamma]\delta \|\dot w\|^2.
\end{eqnarray*}
Thus, assuming the conditions 
\begin{eqnarray}
&& \sigma-\dot\delta-\delta\gamma \leq 0  \label{def:sigma2-H}
\vspace{2mm}\\
&&\sigma\dot\sigma+\frac12\dot\xi\leq 0  \label{def:sigma1-H}
\end{eqnarray}
 we obtain the inequality
\begin{equation}\label{basic-Liap-22-H}
\dfrac{d}{dt}\cE +\left( \delta  b 
	\sigma-\dfrac{d}{dt}(\delta^2b) \right)\cF_\mu(w) \leq 0.
\end{equation}
%
The sign of $\cF_\mu (w)(t)$ is a priori unknown, because of the term 
$\langle\lambda^\star , Ax(t)+By(t)-c\rangle  $ which comes within its definition.Therefore, we are led to assume that
\begin{equation}\label{def:sigma3-H}
\delta  b \sigma-\dfrac{d}{dt}(\delta^2b)= 0.
\end{equation}		
Then, inequality \eqref{basic-Liap-22-H} gives $\dfrac{d}{dt}\cE(t)\leq 0$ on $(t_0,+\infty)$. 
That's the basic ingredient of the Lyapunov analysis.
}
\fi

\section{The uniformly convex case}\label{sec:strongly_convex}
%
We now turn to examine the convergence properties of the trajectories generated by \eqref{eq:trials}, when the objective $F$ in \eqref{eq:P} is uniformly convex on bounded sets. Recall, see \eg \cite{BauschkeCombettes}, that $F: \cX \times \cY \to \R$ is uniformly convex on bounded sets if, for each $r > 0$, there is an increasing function $\psi_r: [0,+\infty[ \to [0,+\infty[$ vanishing only at the origin, such that 
\begin{equation}\label{eq:uniformconv}
F(v) \geq F(w) + \dotp{\nabla F(w)}{v-w} + \psi_r(\anorm{v-w})
\end{equation}
for all $(v,w) \in (\cX \times \cY)^2$ such that $\anorm{v} \leq r$ and $\anorm{w} \leq r$. The strongly convex case corresponds to $\psi_r(t) = c_F t^2/2$ for some $c_F > 0$. In finite dimension, strict convexity of $F$ entails uniform convexity on any non-empty bounded closed convex subset of $\cX \times \cY$, see \cite[Corollary~10.18]{BauschkeCombettes}.

\begin{theorem} \label{thm:convergence_s}
Suppose that $F$ is uniformly convex on bounded sets, and let $(x^\star, y^\star)$ be the unique solution of the minimization problem \eqref{eq:P}. Assume also that $\sS$, the set of saddle points of $\cL$ in \eqref{eq:minmax} is non-empty. Suppose that the conditions \ref{cond:G1+}--\ref{cond:G4} on the coefficients of \eqref{eq:trials} are satisfied for all $t \geq t_0$. Then, each solution trajectory  $t\in [t_0, +\infty[  \mapsto (x(t),y(t),\lambda(t))$ of \eqref{eq:trials} satisfies, $\forall t \geq t_0$,
\[
\psi_r\pa{\anorm{(x(t),y(t))-(x^\star,y^\star)}} = \cO \pa{ \frac{1}{\alpha(t)^2 \sigma(t)^2 b(t)}}.
\]
As a consequence, assuming that $\lim_{t\to +\infty} \alpha(t)^2 \sigma(t)^2 b(t) = +\infty$, we have that the trajectory $t\mapsto (x(t), y(t))$ converges strongly to $(x^\star, y^\star)$ as $t\to +\infty$.
\end{theorem}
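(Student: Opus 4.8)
The plan is to upgrade the value estimate already obtained in Theorem~\ref{ACFR,rescale} using the extra coercivity furnished by uniform convexity, evaluated at the (unique) minimizer. Throughout, I fix a saddle point $(x^\star,y^\star,\lambda^\star)\in\sS$; uniform convexity of $F$ forces strict convexity, so $(x^\star,y^\star)$ is indeed the unique solution of \eqref{eq:P}. The first step is to secure a \emph{uniform} radius $r$, so that a single modulus $\psi_r$ governs the estimate for all $t$. Since the hypotheses include \ref{cond:G1+}, the first bound of Theorem~\ref{ACFR_rescale_boundedness}(1) reads $\anorm{(x(t),y(t),\lambda(t))-(x^\star,y^\star,\lambda^\star)}^2\le C/\xi(t)$, where $\xi(t)=\sigma(t)\bpa{\sigma(t)(\gamma(t)\alpha(t)-\dot\alpha(t)-1)-2\alpha(t)\dot\sigma(t)}$ is exactly the quantity whose infimum \ref{cond:G1+} bounds away from $0$. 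Hence $\inf_{t\ge t_0}\xi(t)>0$, the trajectory is bounded, and I may fix $r>0$ with $\anorm{(x(t),y(t))}\le r$ for all $t\ge t_0$ and $\anorm{(x^\star,y^\star)}\le r$.

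The second step transfers the uniform convexity of $F$ to the partial augmented Lagrangian $(x,y)\mapsto\cL_\mu(x,y,\lambda^\star)$. This map equals $F$ plus $(x,y)\mapsto\dotp{\lambda^\star}{Ax+By-c}+\frac\mu2\anorm{Ax+By-c}^2$, which is convex and $\cC^1$; adding a convex differentiable function to the subgradient inequality \eqref{eq:uniformconv} preserves it with the \emph{same} modulus $\psi_r$ on the ball of radius $r$. Applying the resulting inequality with current point $(x(t),y(t))$ and base point $(x^\star,y^\star)$, and invoking the optimality condition $\nabla_{x,y}\cL_\mu(x^\star,y^\star,\lambda^\star)=0$ from \eqref{opt_system}, the first-order term vanishes and I obtain
\[
\cL_\mu(x(t),y(t),\lambda^\star)-\cL_\mu(x^\star,y^\star,\lambda^\star)\ \ge\ \psi_r\pa{\anorm{(x(t),y(t))-(x^\star,y^\star)}}.
\]

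The third step feeds this lower bound into the Lyapunov value estimate \eqref{rat-conv-L1}, namely $\delta(t)^2b(t)\bpa{\cL_\mu(x(t),y(t),\lambda^\star)-\cL_\mu(x^\star,y^\star,\lambda^\star)}\le\cE(t_0)$; since $\delta=\sigma\alpha$ gives $\delta^2 b=\alpha^2\sigma^2 b$, I conclude
\[
\psi_r\pa{\anorm{(x(t),y(t))-(x^\star,y^\star)}}\ \le\ \frac{\cE(t_0)}{\alpha(t)^2\sigma(t)^2 b(t)}=\cO\pa{\frac{1}{\alpha(t)^2\sigma(t)^2 b(t)}},
\]
which is the claimed rate. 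For the convergence statement, when $\alpha(t)^2\sigma(t)^2 b(t)\to+\infty$ the right-hand side tends to $0$, hence $\psi_r\pa{\anorm{(x(t),y(t))-(x^\star,y^\star)}}\to 0$. A standard contradiction argument then closes the proof: if along some $t_n\to+\infty$ one had $\anorm{(x(t_n),y(t_n))-(x^\star,y^\star)}\ge\eps>0$, then $\psi_r$ being nondecreasing and vanishing only at $0$ would give $\psi_r(\eps)>0$ as a lower bound for the values, a contradiction; thus $(x(t),y(t))\to(x^\star,y^\star)$ strongly.

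I do not anticipate a serious obstacle in this argument. The conceptual point is to feed the Lyapunov value bound with the uniform convexity of the \emph{augmented Lagrangian at $\lambda^\star$}, rather than of $F$ alone, exploiting the vanishing of its partial gradient at the saddle point. The only genuine care needed is the first step: establishing boundedness of the trajectory through \ref{cond:G1+} so that one fixed modulus $\psi_r$ controls the estimate uniformly in $t$. Without a uniform radius the modulus would depend on $t$, and the $\cO(\cdot)$ statement—as well as the ensuing contradiction argument for strong convergence—could break down.
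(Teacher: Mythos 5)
Your proposal is correct and follows essentially the same path as the paper: secure a uniform radius via the boundedness estimates of Theorem~\ref{ACFR_rescale_boundedness} under \ref{cond:G1+}, kill the first-order term at $(x^\star,y^\star)$ using the saddle-point optimality conditions, and feed the resulting coercivity bound into the Lyapunov value estimate $\delta^2 b \,\cF_\mu \le \cE(t_0)$. The only (cosmetic) differences are that you apply the uniform convexity inequality to $(x,y)\mapsto\cL_\mu(x,y,\lambda^\star)$ with $\nabla_{x,y}\cL_\mu(x^\star,y^\star,\lambda^\star)=0$, whereas the paper applies it to $F$ and converts $\dotp{\nabla F(x^\star,y^\star)}{\cdot}$ into $-\dotp{\lambda^\star}{Ax(t)+By(t)-c}$ to bound by $\cL(x(t),y(t),\lambda^\star)-\cL(x^\star,y^\star,\lambda^\star)$, and that you derive boundedness directly from part (1) of Theorem~\ref{ACFR_rescale_boundedness} rather than quoting part (2).
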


\begin{proof}
Uniformly convex functions are strictly convex and coercive, and thus $(x^\star,y^\star)$ is unique. From Theorem~\ref{ACFR_rescale_boundedness}\ref{ACFR_rescale_boundedness:itemii}, there exists $r_1 > 0$ such that
\[
\sup_{t \geq t_0} \norm{(x(t),y(t))-(x^\star,y^\star)} \leq r_1 .
\]
Taking $r \geq r_1 + \norm{(x^\star,y^\star)}$, we have that the trajectory $(x(\cdot),y(\cdot))$ and $(x^\star,y^\star)$ are both contained in the ball of radius $r$ centered at the origin. Let $\lambda^\star$ be a Lagrange multiplier of problem \eqref{eq:P}, \ie $(x^\star,y^\star,\lambda^\star) \in \sS$. On the one hand, applying the uniform convexity inequality \eqref{eq:uniformconv} at $v=(x(t),y(t))$ and $w=(x^\star,y^\star)$, we have
\begin{multline*}
F(x(t),y(t)) \geq F(x^\star,y^\star) + \dotp{\nabla F(x^\star,y^\star)}{(x(t),y(t))-(x^\star,y^\star)} \\
+ \psi_r\pa{\anorm{(x(t),y(t))-(x^\star,y^\star)}} .
\end{multline*}
On the other hand, the optimality conditions \eqref{opt_system} tells us that
\[
\dotp{\nabla F(x^\star,y^\star)}{(x(t),y(t))-(x^\star,y^\star)} = -\dotp{\lambda^\star}{Ax(t)+By(t))-c} 
\]
and obviously
\[
F(x^\star,y^\star) = \cL(x^\star,y^\star,\lambda^\star) .
\]
Thus,
\begin{eqnarray*} 
\psi_r\pa{\anorm{(x(t),y(t))-(x^\star,y^\star)}}  \leq \cL(x(t),y(t),\lambda^\star) - \cL(x^\star,y^\star,\lambda^\star) .
\end{eqnarray*}
Invoking the estimate in Theorem~\ref{ACFR,rescale}\ref{theoACFRrescale:itemb}\ref{theoACFRrescale:itembi} yields the claim. \qed
\end{proof}

\begin{remark}\label{rem:convergence_s}
The assumption $\lim_{t\to +\infty} \alpha(t)^2 \sigma(t)^2 b(t) = +\infty$ made in the above theorem is very mild. It holds in particular in all the situations discussed in Section~\ref{sec:particular}. In particular, for $\alpha (t) = t^r$, $0\leq r<1$, $\sigma$ constant, and $b(t)= \frac{1}{t^{2r}} \exp \bpa{\frac{1}{1-r}t^{1-r}}$, one has $\alpha(t)\sqrt{b(t)}= \exp\bpa{\frac{1}{2(1-r)}t^{1-r}}$. Thus, if $F$ is strongly convex, the trajectory $t  \mapsto (x(t),y(t))$ converges exponentially fast to the unique minimizer $(x^\star,y^\star)$.
\end{remark}

\if
{
\begin{remark}

Let us verify on an elementary situation that indeed under the assumptions of theorems 1 and 2, the coupling term is effective to ensure the convergence of each trajectories towards an equilibrium.
Consider $ f = g = $ 0, and $ A = -B = I $, $ c = 0 $ in which case there is a continuum of solutions which is the entire space. The constraint is equivalent to $ x = y $. The system \eqref{eq:trials} is written
\begin{equation*}
\left\{\begin{array}{lll}
\; \ddot x+\gamma (t) \dot x + b(t) \Big(
\lambda  +   \alpha(t) \dot\lambda 
+ \mu (x-y)   \Big) &=&0\vspace{2mm}\\
 \;   \ddot y+\gamma (t)\dot y - b(t)\Big(
\lambda  +   \alpha(t) \dot\lambda 
+ \mu (x-y)  \Big)  &=&0 \vspace{2mm} \\
\;  \ddot \lambda+\gamma (t)\dot \lambda - b(t)
 \Big( x + \alpha(t)\dot x -(y + \alpha(t)\dot y) \Big)&=&0.
 \end{array}\right.
\end{equation*}
\end{remark}
By adding the first two  equations, and defining $s=x+y$, we get
$$
\ddot{s}+\gamma (t) \dot{s}=0.
$$
So, under the condition $(H_0)$, we get that the limit of  $s(t)=x(t)+y(t)$ exists, as $t\to +\infty$.
Moreover by Theorem 1, we know that the limit of $Ax(t)+By(t)$ is equal to zero, under the assumption $\lim_{t\to +\infty} \alpha(t)^2 \sigma(t)^2 b(t)=+\infty$.  According to $A=-B=I$ this implies that the limit
of $x-y$ is zero. Thus $x(t)+y(t)$ and $x(t)-y(t)$ converge, which implies that $x(t)$ and $y(t)$ converge, with the same limit (since their difference tends to zero).
On the other hand, even in this elementary situation, the convergence of 
$\lambda(t)$ is  a non-trivial question.

}
\fi


\section{Parameters choice for fast convergence rates}\label{sec:particular}
In this section, we suppose  that the solution set  $\sS$ of the saddle value problem \eqref{opt_system} is non-empty, so as to invoke Theorem~\ref{ACFR,rescale}\ref{theoACFRrescale:itemb}, Theorem~\ref{ACFR_rescale_boundedness} and Theorem~\ref{Lyap_gen}.
The set of conditions \ref{cond:G1+}, \ref{cond:G2}, \ref{cond:G3}, \ref{cond:G4} and \ref{cond:G5} imposes sufficient assumptions on the coefficients $\gamma, \,  \alpha,  \, b$ of the dynamical system \eqref{eq:trials}, and on the coefficients $\sigma, \delta, \xi$ of the function $\cE$ defined in \eqref{eq:lyapcont}, which guarantee that $\cE$ is a Lyapunov function for the dynamical system \eqref{eq:trials}.

Let us show that this system admits many solutions of practical interest which in turn will entail fast convergence rates. For this, we will organize our discussion around the coefficient $\alpha$ as dictated by Theorem~\ref{Lyap_gen}. Indeed, the latter shows that the convergence rate of the Lagrangian values and feasibility is $\displaystyle{\cO\pa{\exp\pa{-\int_{t_0}^t\frac{1}{\alpha(s)} ds}}}$. Therefore, to obtain a meaningful convergence result, we need to assume that
\[
\int_{t_0}^{+\infty}\frac{1}{\alpha(s)} ds = +\infty.
\]
This means that the critical growth is $\alpha(t) = a t$ for $a > 0$. If $\alpha(t)$ grows faster, our analysis do not provide an instructive convergence rate. So, it is an essential ingredient of our approach to assume that  $\alpha (t)$ remains positive, but not too large as $t\to +\infty$.
In fact, the set of conditions \ref{cond:G1+}, \ref{cond:G2}, \ref{cond:G3}, \ref{cond:G4} and \ref{cond:G5} simplifies considerably by taking $\sigma$ a positive constant, and 
$\gamma\alpha -\dot \alpha$ a constant strictly greater than one. This is made precise in the following statement whose proof is immediate.
\begin{corollary}\label{cor:paramchoice}
Suppose that $\sigma \equiv \sigma_0$ is a positive constant, and $\gamma\alpha -\dot \alpha \equiv \eta>1$.
Then the set of conditions \ref{cond:G1+}, \ref{cond:G2}, \ref{cond:G3}, \ref{cond:G4} and \ref{cond:G5} reduces to
\begin{equation}\label{eq:allconds}
b(1 + 2\eta - 2 \gamma \alpha) - \alpha \dot{b} = 0 \text{ and } \inf_{t\geq t_0}\alpha (t)>0. 
\end{equation}
\end{corollary}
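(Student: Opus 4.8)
The plan is to substitute the two structural hypotheses directly into each of the five conditions \ref{cond:G1+}, \ref{cond:G2}, \ref{cond:G3}, \ref{cond:G4}, \ref{cond:G5} and record which survive. The hypotheses are $\sigma(t) \equiv \sigma_0 > 0$, so that $\dot\sigma \equiv 0$, and $\gamma\alpha - \dot\alpha \equiv \eta$ with $\eta > 1$, a \emph{constant}. The guiding observation is that the three inequality conditions collapse to assertions about fixed positive constants and hence hold automatically, so that the single equality \ref{cond:G4} is the only condition carrying genuine information about the time-scaling coefficient $b$.

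First I would dispose of \ref{cond:G1+} and \ref{cond:G2}. With $\dot\sigma = 0$, the bracket $\sigma\pa{\gamma\alpha - \dot\alpha - 1} - 2\alpha\dot\sigma$ appearing in \ref{cond:G1+} reduces to $\sigma_0(\eta - 1)$, so the quantity inside the infimum equals the constant $\sigma_0^2(\eta - 1)$, which is strictly positive since $\sigma_0 > 0$ and $\eta > 1$; thus \ref{cond:G1+} holds. The left-hand side of \ref{cond:G2} likewise reduces to $\sigma_0(\eta - 1) > 0$, and is therefore satisfied. For \ref{cond:G3}, setting $\dot\sigma = 0$ turns the differentiated expression $\sigma\pa{\sigma\bpa{\gamma\alpha - \dot\alpha} - 2\alpha\dot\sigma}$ into $\sigma_0^2(\gamma\alpha - \dot\alpha) = \sigma_0^2\eta$, a constant whose derivative vanishes, so \ref{cond:G3} holds as an equality. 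Condition \ref{cond:G5} is literally $\inf_{t \geq t_0}\alpha(t) > 0$, which is already one of the two reduced conditions in \eqref{eq:allconds}.

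The only step requiring an actual computation is \ref{cond:G4}. Factoring out the nonzero constant $\sigma_0^2$ reduces it to $\alpha b - \frac{d}{dt}(\alpha^2 b) = 0$. I would then expand $\frac{d}{dt}(\alpha^2 b) = 2\alpha\dot\alpha\, b + \alpha^2\dot b$ and divide through by $\alpha$, which is legitimate once \ref{cond:G5} is in force, to obtain $b(1 - 2\dot\alpha) - \alpha\dot b = 0$. Substituting $\dot\alpha = \gamma\alpha - \eta$ gives $1 - 2\dot\alpha = 1 + 2\eta - 2\gamma\alpha$, producing exactly the equation $b(1 + 2\eta - 2\gamma\alpha) - \alpha\dot b = 0$ claimed in \eqref{eq:allconds}.

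I do not expect any genuine obstacle here: the entire argument is a direct substitution followed by one elementary manipulation of a derivative. The only point demanding a moment of care is that the division by $\alpha$ in the last step is licensed precisely because \ref{cond:G5} guarantees $\alpha$ stays bounded away from zero; this is also why \ref{cond:G5} must be retained in the reduced system rather than being absorbed into the remaining equality.
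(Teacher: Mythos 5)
Your proof is correct and is precisely the ``immediate'' verification the paper has in mind (the paper offers no written proof, stating only that the proof is immediate): with $\dot\sigma\equiv 0$ and $\gamma\alpha-\dot\alpha\equiv\eta>1$, conditions \ref{cond:G1+}, \ref{cond:G2}, \ref{cond:G3} become statements about the positive constants $\sigma_0^2(\eta-1)$, $\sigma_0(\eta-1)$, $\sigma_0^2\eta$ and hold automatically, while \ref{cond:G4} reduces, after factoring $\sigma_0^2$ and dividing by $\alpha$ (licensed by \ref{cond:G5}), to the stated ODE for $b$. Your remark that \ref{cond:G5} must be retained exactly because it licenses this division is a correct and worthwhile point of care.
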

Following the above discussion, we are led to consider the following three cases.
\subsection{Constant parameter $\alpha$}
Consider the simple situation where $\sigma \equiv \sigma_0 > 0$ and $\eta > 1$ in which case \ref{cond:G1+} reads $\gamma\alpha - \dot{\alpha} - 1 = \eta - 1 > 0$. Taking $\alpha \equiv \alpha_0$, a positive constant, yields $\gamma \equiv \frac{\eta}{\alpha_0}$ and \eqref{eq:allconds} amounts to solving 
\[
b - \alpha_0 \dot b = 0 ,
\]
that is, $b(t)= \exp\pa{{\frac{t}{\alpha_0}}}$.
%
Capitalizing on Corollary~\ref{cor:paramchoice}, and specializing the results of Theorem~\ref{ACFR,rescale}\ref{theoACFRrescale:itemb} (or equivalently Theorem~\ref{Lyap_gen} according to Remark~\ref{rem:Lyap_gen}) and Theorem~\ref{ACFR_rescale_boundedness} to the current choice of parameters yields the following statement.

\begin{proposition} \label{O-exp}
Suppose that $\sigma \equiv \sigma_0 > 0$, $\eta > 1$, and that the coefficients of \eqref{eq:trials} satisfy:
the functions $\alpha, \gamma$ are constant with 
\[
\alpha \equiv \alpha_0 >0, \; \gamma \equiv \frac{\eta}{\alpha_0}, \; b(t)= \exp\pa{{\frac{t}{\alpha_0}}}.
\]
Suppose that $\sS$ is non-empty.
Then, for any solution trajectory  $(x(\cdot),y(\cdot),\lambda(\cdot))$ of \eqref{eq:trials}, the trajectory and its velocity remain bounded, and we have 
\begin{align*}
\cL(x(t),y(t),\lambda^\star) - \cL(x^\star,y^\star,\lambda^\star) &= \cO\pa{\exp\pa{-\frac{t}{\alpha_0}}}, \\
\norm{Ax(t)+By(t)-c}^2 &= \cO\pa{\exp\pa{-\frac{t}{\alpha_0}}} , \\
-C_1\exp\pa{-\frac{t}{2\alpha_0}} \leq F( x(t),y(t))-F^\star &\leq C_2\exp\pa{-\frac{t}{\alpha_0}} , \\
\anorm{(\dot {x}(\cdot), \dot {y}(\cdot), \dot{\lambda}(\cdot))} &\in L^2([t_0,+\infty[) ,
\end{align*}
where $C_1$ and $C_2$ are positive constants.
\end{proposition}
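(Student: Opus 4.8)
The strategy is to recognise that the proposed coefficients, together with $\sigma\equiv\sigma_0$, fall exactly under Corollary~\ref{cor:paramchoice}, so that conditions \ref{cond:G1+}, \ref{cond:G2}, \ref{cond:G3}, \ref{cond:G4} and \ref{cond:G5} are met; every assertion then follows by substituting these coefficients into the general estimates of Theorem~\ref{ACFR,rescale}\ref{theoACFRrescale:itemb} and Theorem~\ref{ACFR_rescale_boundedness}. No fresh analysis is needed beyond this specialisation, and I would present the proof as a verification-then-substitution.

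First I would check the reduced conditions. Since $\sigma\equiv\sigma_0>0$ is constant and $\alpha\equiv\alpha_0$, we have $\dot\sigma=0$, $\dot\alpha=0$ and $\gamma\alpha=\frac{\eta}{\alpha_0}\alpha_0=\eta>1$, so $\gamma\alpha-\dot\alpha\equiv\eta$ is a constant strictly larger than one, placing us in the hypotheses of Corollary~\ref{cor:paramchoice}. The first identity in \eqref{eq:allconds} then reads $b-\alpha_0\dot b=0$, whose positive solution is $b(t)=\exp(t/\alpha_0)$ (up to a positive multiplicative constant), while the second requirement $\inf_{t\ge t_0}\alpha(t)=\alpha_0>0$ is immediate. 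Hence \ref{cond:G1+}, \ref{cond:G2}, \ref{cond:G3}, \ref{cond:G4}, \ref{cond:G5} all hold, and trivially $\sup_{t\ge t_0}\sigma(t)=\sigma_0<+\infty$.

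Next I would substitute. A direct computation gives $\alpha(t)^2\sigma(t)^2 b(t)=\alpha_0^2\sigma_0^2\exp(t/\alpha_0)$, so that $1/(\alpha^2\sigma^2 b)=\cO(\exp(-t/\alpha_0))$ and $1/(\alpha\sigma\sqrt{b})=\cO(\exp(-t/(2\alpha_0)))$, the constants $\alpha_0,\sigma_0$ being absorbed into the $\cO(\cdot)$. Plugging these into the rates of Theorem~\ref{ACFR,rescale}\ref{theoACFRrescale:itemb}, namely \ref{theoACFRrescale:itembi}, \ref{theoACFRrescale:itembii} and \ref{theoACFRrescale:itembiii}, yields the displayed bounds on the Lagrangian gap, on the squared feasibility, and the two-sided bound on $F(x(t),y(t))-F^\star$. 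For the boundedness claims I would invoke Theorem~\ref{ACFR_rescale_boundedness}\ref{ACFR_rescale_boundedness:itemii}: since $\sup_t\sigma<+\infty$ and \ref{cond:G1+} hold, the trajectory $(x,y,\lambda)$ is bounded and $\anorm{(\dot x,\dot y,\dot\lambda)}=\cO(1/(\alpha\sigma))=\cO(1)$; adjoining \ref{cond:G5} upgrades this to $\sup_t\anorm{(\dot x,\dot y,\dot\lambda)}<+\infty$, so the velocity is bounded.

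Finally, the $L^2$ membership of the velocity comes from the integral estimate Theorem~\ref{ACFR,rescale}\ref{theoACFRrescale:itemb}\ref{theoACFRrescale:itembv}. The only point deserving attention is the weight $k(t)$: with $\dot\sigma=0$, $\dot\alpha=0$ and $\gamma\alpha=\eta$, it collapses to $k(t)=\alpha_0\sigma_0\pa{\sigma_0(\eta-1)}=\alpha_0\sigma_0^2(\eta-1)$, a strictly positive \emph{constant} precisely because $\eta>1$. Consequently $\int_{t_0}^{+\infty}\anorm{(\dot x,\dot y,\dot\lambda)}^2\,dt<+\infty$, i.e.\ $\anorm{(\dot x,\dot y,\dot\lambda)}\in L^2([t_0,+\infty[)$. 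There is no genuine obstacle in this proof; it is entirely mechanical, and the single step requiring a moment of thought is observing that $k$ reduces to a positive constant, which is exactly what turns the weighted integral bound into plain square-integrability.
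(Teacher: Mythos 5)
Your proposal is correct and follows essentially the same route as the paper: verify the hypotheses of Corollary~\ref{cor:paramchoice} for the constant choices of $\alpha$, $\gamma$, $\sigma$ (reducing \eqref{eq:allconds} to $b-\alpha_0\dot b=0$, solved by $b(t)=\exp(t/\alpha_0)$), and then specialize Theorem~\ref{ACFR,rescale}\ref{theoACFRrescale:itemb} and Theorem~\ref{ACFR_rescale_boundedness} to this parameter choice. Your explicit observation that the weight $k(t)$ collapses to the positive constant $\alpha_0\sigma_0^2(\eta-1)$, turning the weighted integral estimate \ref{theoACFRrescale:itembv} into plain square-integrability of the velocity, is a detail the paper leaves implicit but is exactly the intended argument.
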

%
\subsection{Linearly increasing parameter $\alpha$}\label{sec:var}
We now take $\sigma \equiv \sigma_0 > 0$, $\eta > 1$ and $\alpha(t)= \alpha_{0} t$ with $\alpha_{0} > 0$. Then \ref{cond:G1+} is satisfied and we have $\gamma(t) =  \frac{\eta+\alpha_0}{\alpha_0 t}$. Condition~\eqref{eq:allconds} then becomes
\[
b(t) (1 - 2\alpha_0) - \alpha_0 t \dot b(t) = 0,
\]
which admits $b(t)= t^{\frac{1}{\alpha_0}-2}$ as a solution.
%
We then have $b \equiv 1$ for $\alpha_0=1/2$, while one can distinguish two regimes for its limiting behaviour with
\[
\lim_{t \to +\infty} b(t) = 
\begin{cases}
+\infty & \alpha_{0} < \demi , \\
0 & \alpha_{0} > \demi .
\end{cases}
\]
In view of Theorem~\ref{ACFR,rescale}\ref{theoACFRrescale:itemb} and Theorem~\ref{ACFR_rescale_boundedness}, we obtain the following result.
\begin{proposition} \label{O-1/t2}
Suppose that $\sigma \equiv \sigma_0 > 0$, $\eta > 1$, and that the coefficients of \eqref{eq:trials} satisfy
\[
\alpha(t)= \alpha_{0} t   \mbox{ with } \alpha_{0} >0, \; \gamma(t) =  \frac{\eta+\alpha_0}{\alpha_0 t}, \; b(t)= t^{\frac{1}{\alpha_0}-2}.
\]
Suppose that $\sS$ is non-empty. Then, for any  solution trajectory $(x(\cdot),y(\cdot),\lambda(\cdot))$ of \eqref{eq:trials}, the trajectory remains bounded, and we have the following convergence rates:
\begin{align*}
\cL(x(t),y(t),\lambda^\star) - \cL(x^\star,y^\star,\lambda^\star) &= \cO\pa{\frac{1}{t^{\frac{1}{\alpha_0}}}}, \\
\norm{Ax(t)+By(t)-c}^2 &= \cO\pa{\frac{1}{t^{\frac{1}{\alpha_0}}}} , \\
-\frac{C_1}{t^{\frac{1}{2\alpha_0}}} \leq F( x(t),y(t))-F^\star &\leq \frac{C_2}{t^{\frac{1}{\alpha_0}}} , \\
\anorm{(\dot {x}(t), \dot {y}(t), \dot{\lambda}(t)} &= \cO\pa{\dfrac1{t}} .
\end{align*}
where $C_1$ and $C_2$ are positive constants.
\end{proposition}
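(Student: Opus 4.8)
The plan is to recognize this proposition as a direct specialization of Theorem~\ref{ACFR,rescale}\ref{theoACFRrescale:itemb} and Theorem~\ref{ACFR_rescale_boundedness} to the explicit coefficients at hand. Essentially all the work is to confirm that these coefficients satisfy the Lyapunov system \ref{cond:G1+}, \ref{cond:G2}, \ref{cond:G3}, \ref{cond:G4}, \ref{cond:G5}, and then to evaluate the abstract rate quantities $\alpha^2\sigma^2 b$ and $\alpha\sigma\sqrt{b}$ for this choice.

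First I would invoke Corollary~\ref{cor:paramchoice}. Its structural hypotheses hold here: $\sigma \equiv \sigma_0 > 0$ is a positive constant, and since $\gamma(t)\alpha(t) = \frac{\eta+\alpha_0}{\alpha_0 t}\,\alpha_0 t = \eta + \alpha_0$ while $\dot\alpha(t) = \alpha_0$, one has $\gamma\alpha - \dot\alpha \equiv \eta > 1$. The corollary then collapses the whole system to the scalar ODE $b(1 + 2\eta - 2\gamma\alpha) - \alpha\dot b = 0$ together with $\inf_{t\geq t_0}\alpha(t) > 0$. Substituting $\gamma\alpha = \eta + \alpha_0$ rewrites the ODE as $b(1 - 2\alpha_0) - \alpha_0 t\,\dot b = 0$, and a one-line differentiation confirms that $b(t) = t^{1/\alpha_0 - 2}$ solves it; the positivity condition $\inf_{t\geq t_0}\alpha_0 t > 0$ holds since $t_0 > 0$. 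Thus \ref{cond:G1+}--\ref{cond:G4} and \ref{cond:G5} all hold, and in particular \ref{cond:G1+} (which subsumes \ref{cond:G1}) holds with strictly positive infimum: since $\dot\sigma = 0$, one computes $\sigma\bpa{\sigma(\gamma\alpha - \dot\alpha - 1) - 2\alpha\dot\sigma} = \sigma_0^2(\eta - 1) > 0$, a fact I will reuse for the boundedness statement.

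Next, the value and feasibility rates follow by reading off Theorem~\ref{ACFR,rescale}\ref{theoACFRrescale:itemb}. Computing $\alpha(t)^2\sigma(t)^2 b(t) = \alpha_0^2\sigma_0^2\, t^2\, t^{1/\alpha_0 - 2} = \alpha_0^2\sigma_0^2\, t^{1/\alpha_0}$, parts \ref{theoACFRrescale:itembi} and \ref{theoACFRrescale:itembii} give the $\cO(1/t^{1/\alpha_0})$ rates on the Lagrangian gap and on the squared feasibility, and the upper bound of \ref{theoACFRrescale:itembiii} gives the same rate for $F(x,y) - F^\star$. For the matching lower bound I would use $\alpha(t)\sigma(t)\sqrt{b(t)} = \alpha_0\sigma_0\, t^{1/(2\alpha_0)}$, turning the lower bound of \ref{theoACFRrescale:itembiii} into $-C_1/t^{1/(2\alpha_0)}$, as claimed.

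Finally, the boundedness of the trajectory and the velocity rate are supplied by Theorem~\ref{ACFR_rescale_boundedness}\ref{ACFR_rescale_boundedness:itemii}, whose hypotheses ($\sup_{t\geq t_0}\sigma(t) < +\infty$, trivially, and \ref{cond:G1+}) I have already checked; this yields $\sup_{t\geq t_0}\norm{(x(t),y(t),\lambda(t))} < +\infty$ and $\norm{(\dot x(t),\dot y(t),\dot\lambda(t))} = \cO(1/(\alpha(t)\sigma(t)))$, which equals $\cO(1/t)$ because $\alpha(t)\sigma(t) = \alpha_0\sigma_0\, t$. I do not expect any genuine obstacle here: the argument is pure specialization, and the only points demanding a little care are verifying that $b(t) = t^{1/\alpha_0 - 2}$ indeed solves the linear ODE in \eqref{eq:allconds} and that the exponents $1/\alpha_0$ and $1/(2\alpha_0)$ are tracked consistently across the four displayed estimates.
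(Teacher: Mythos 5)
Your proposal is correct and follows exactly the paper's own route: the paper likewise verifies the hypotheses of Corollary~\ref{cor:paramchoice} (with $\gamma\alpha-\dot\alpha\equiv\eta>1$ and $\sigma\equiv\sigma_0$), reduces \eqref{eq:allconds} to $b(1-2\alpha_0)-\alpha_0 t\,\dot b=0$ solved by $b(t)=t^{1/\alpha_0-2}$, and then reads the rates off Theorem~\ref{ACFR,rescale}\ref{theoACFRrescale:itemb} and Theorem~\ref{ACFR_rescale_boundedness} using $\alpha^2\sigma^2 b=\alpha_0^2\sigma_0^2 t^{1/\alpha_0}$ and $\alpha\sigma\sqrt{b}=\alpha_0\sigma_0 t^{1/(2\alpha_0)}$. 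Your bookkeeping of the exponents and of the velocity bound $\cO(1/(\alpha(t)\sigma(t)))=\cO(1/t)$ matches the paper's specialization precisely.
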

%

%

\subsection{Power-type parameter  $\alpha$}
Let us now take $\sigma \equiv \sigma_0 > 0$, $\eta > 1$ and consider the intermediate case between the two previous situations, where $\alpha (t) = t^r$, $0<r<1$. Thus \ref{cond:G1+} is satisfied and we have $\gamma(t) = \frac{\eta}{t^r} + \frac{r}{t}$. Condition~\eqref{eq:allconds} is then equivalent to
\[
b(t) (1- 2r t^{r-1} ) - t^r \dot b(t) = 0,
\]
which, after integration, shows that 
$b(t)= \frac{1}{t^{2r}} \exp\pa{ \frac{1}{1-r}t^{1-r}}$ is a solution. Appealing again to Theorem~\ref{ACFR,rescale}\ref{theoACFRrescale:itemb} and Theorem~\ref{ACFR_rescale_boundedness}, we obtain the following claim.
\begin{proposition} \label{alpha-puissance}
Take $\sigma \equiv \sigma_0 > 0$ and $\eta > 1$. Suppose that the coefficients of \eqref{eq:trials} satisfy
\[
\alpha(t)= t^r   \mbox{ with } 0<r<1, \; \gamma(t) =  \frac{\eta}{t^r} + \frac{r}{t}, \;
b(t)= \frac{1}{t^{2r}} \exp\pa{ \frac{1}{1-r}t^{1-r}} .
\]
Suppose that $\sS$ is non-empty. Then, for any  solution trajectory $(x(\cdot),y(\cdot),\lambda(\cdot))$ of  \eqref{eq:trials}, the trajectory remains bounded, and we have the  convergence rates:
\begin{align*}
\cL(x(t),y(t),\lambda^\star) - \cL(x^\star,y^\star,\lambda^\star) &= \cO\pa{\exp\pa{-\frac{1}{1-r}t^{1-r}}}, \\
\norm{Ax(t)+By(t)-c}^2 &= \cO\pa{\exp\pa{-\frac{1}{1-r}t^{1-r}}} , \\
-C_1\exp\pa{-\frac{1}{2(1-r)}t^{1-r}} \leq F( x(t),y(t))-F^\star &\leq C_2\exp\pa{-\frac{1}{1-r}t^{1-r}} , \\
\anorm{(\dot {x}(t), \dot {y}(t), \dot{\lambda}(t)} &= \cO\pa{\dfrac1{t^r}} .
\end{align*}
where $C_1$ and $C_2$ are positive constants.
\end{proposition}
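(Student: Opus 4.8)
The plan is to obtain Proposition~\ref{alpha-puissance} as a direct specialization of the general theory, namely Corollary~\ref{cor:paramchoice} together with Theorem~\ref{ACFR,rescale}\ref{theoACFRrescale:itemb} and Theorem~\ref{ACFR_rescale_boundedness}\ref{ACFR_rescale_boundedness:itemii}. Throughout we take $t_0>0$, which ensures the parameters are well-defined, positive and of class $\cC^1$ on $[t_0,+\infty[$. The first step is to put the parameters into the form required by Corollary~\ref{cor:paramchoice}: $\sigma\equiv\sigma_0$ is already a positive constant, and with $\alpha(t)=t^r$ we have $\dot\alpha(t)=rt^{r-1}$ and $\gamma(t)\alpha(t)=\pa{\eta t^{-r}+rt^{-1}}t^r=\eta+rt^{r-1}$, so that $\gamma\alpha-\dot\alpha\equiv\eta>1$. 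Hence Corollary~\ref{cor:paramchoice} applies and reduces the whole Lyapunov system \ref{cond:G1+}, \ref{cond:G2}, \ref{cond:G3}, \ref{cond:G4}, \ref{cond:G5} to the single condition \eqref{eq:allconds}.

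It then remains to check \eqref{eq:allconds}. On the one hand $\inf_{t\geq t_0}\alpha(t)=t_0^r>0$ since $\alpha$ is increasing. On the other hand, using $\gamma\alpha=\eta+rt^{r-1}$ the coefficient $1+2\eta-2\gamma\alpha$ equals $1-2rt^{r-1}$, so the differential equation in \eqref{eq:allconds} reads $b(t)\pa{1-2rt^{r-1}}-t^r\dot b(t)=0$; substituting the proposed $b(t)=t^{-2r}\exp\pa{\frac{1}{1-r}t^{1-r}}$ and using $\frac{d}{dt}\pa{\frac{1}{1-r}t^{1-r}}=t^{-r}$, a short computation shows the two terms cancel exactly. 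Since in addition $\sup_{t\geq t_0}\sigma(t)=\sigma_0<+\infty$, all hypotheses of Theorem~\ref{ACFR,rescale}\ref{theoACFRrescale:itemb} and Theorem~\ref{ACFR_rescale_boundedness}\ref{ACFR_rescale_boundedness:itemii} are satisfied, which in particular yields boundedness of the trajectory.

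The final step is to make the abstract rates explicit by evaluating the governing quantities. The crucial point is a cancellation of powers of $t$: since $\alpha^2=t^{2r}$ exactly offsets the $t^{-2r}$ in $b$, one finds $\alpha(t)^2\sigma(t)^2 b(t)=\sigma_0^2\exp\pa{\frac{1}{1-r}t^{1-r}}$, whence $\frac{1}{\alpha^2\sigma^2 b}=\cO\pa{\exp\pa{-\frac{1}{1-r}t^{1-r}}}$. Inserting this into Theorem~\ref{ACFR,rescale}\ref{theoACFRrescale:itembi}, \ref{theoACFRrescale:itembii} gives the stated rates for the Lagrangian gap, the squared feasibility, and the upper bound on $F-F^\star$. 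For the lower bound one computes $\alpha(t)\sigma(t)\sqrt{b(t)}=\sigma_0\exp\pa{\frac{1}{2(1-r)}t^{1-r}}$, which produces the factor $\exp\pa{-\frac{1}{2(1-r)}t^{1-r}}$ in Theorem~\ref{ACFR,rescale}\ref{theoACFRrescale:itembiii}. Finally, since $\alpha(t)\sigma(t)=\sigma_0 t^r$, the velocity bound of Theorem~\ref{ACFR_rescale_boundedness}\ref{ACFR_rescale_boundedness:itemii} gives $\anorm{(\dot x,\dot y,\dot\lambda)}=\cO(t^{-r})$.

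There is no genuine analytic obstacle here: the statement is purely a verification that the power-type parameters solve the Lyapunov system, followed by a bookkeeping of exponents. The only point deserving mild care is the exact cancellation of powers of $t$ when forming $\alpha^2\sigma^2 b$ and $\alpha\sigma\sqrt{b}$; it is precisely this cancellation, engineered through the choice of $b$, that produces the subexponential rate $\exp\pa{-\frac{1}{1-r}t^{1-r}}$, intermediate between the polynomial regime of the linearly growing $\alpha$ (Proposition~\ref{O-1/t2}) and the genuinely exponential regime of the constant $\alpha$ (Proposition~\ref{O-exp}).
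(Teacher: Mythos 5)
Your proposal is correct and follows essentially the same route as the paper's own proof: check that $\gamma\alpha-\dot\alpha\equiv\eta>1$ so that Corollary~\ref{cor:paramchoice} reduces everything to \eqref{eq:allconds}, verify that the prescribed $b$ solves the resulting ODE $b(t)(1-2rt^{r-1})-t^r\dot b(t)=0$, and then specialize Theorem~\ref{ACFR,rescale}\ref{theoACFRrescale:itemb} and Theorem~\ref{ACFR_rescale_boundedness} via the cancellation $\alpha(t)^2\sigma(t)^2b(t)=\sigma_0^2\exp\bigl(\tfrac{1}{1-r}t^{1-r}\bigr)$. You merely spell out the verifications and exponent bookkeeping that the paper leaves implicit.
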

\if{
\begin{remark}
In the above results, we see again the trade-off between the damping parameter $\gamma$ and the extrapolation parameter $\alpha$.
This phenomenon was  observed by the authors in \cite{ACR-Optimization-2020} in the study of  a third order dynamic for optimization.
\end{remark}
}
\fi


\subsection{Numerical experiments}\label{sec:numerics}
To support our theoretical claims, we consider in this section two numerical examples with $\cX= \cY= \cZ= \R^2$, one with a strongly convex objective $F$ and one where $F$ is convex but not strongly so.
\begin{enumerate}[label={\bf{Example}~\arabic*},itemindent=10ex]
\item We consider the quadratic programming problem
\[
\min_{(x,y) \in \R^4} F (x, y) = \anorm{x - (1,1)^\transp}^2 + \anorm{y}^2  \quad  \st y = x + (-x_2,0)^\transp,
\]
whose objective is strongly convex and verifies all required assumptions.

\item We consider the minimization problem
\[
\min_{(x,y) \in \R^4} F (x, y) = \log\pa{1+\exp\pa{-\dotp{(1,1)^\transp}{x}}} + \anorm{y}^2  \quad  \st y = x + (-x_2,0)^\transp .
\]
The objective is convex (but not strongly so) and smooth as required. This problem is reminiscent of (regularized) logistic regression very popular in machine learning.

\end{enumerate}

In all our numerical experiments, we consider the continuous time dynamical system \eqref{eq:trials}, solved numerically with a Runge-Kutta adaptive method (ode45 in MATLAB) on the time interval $[1, 20]$.

\begin{figure}[htbp]
\begin{subfigure}{\textwidth}
\includegraphics[width=1\textwidth]{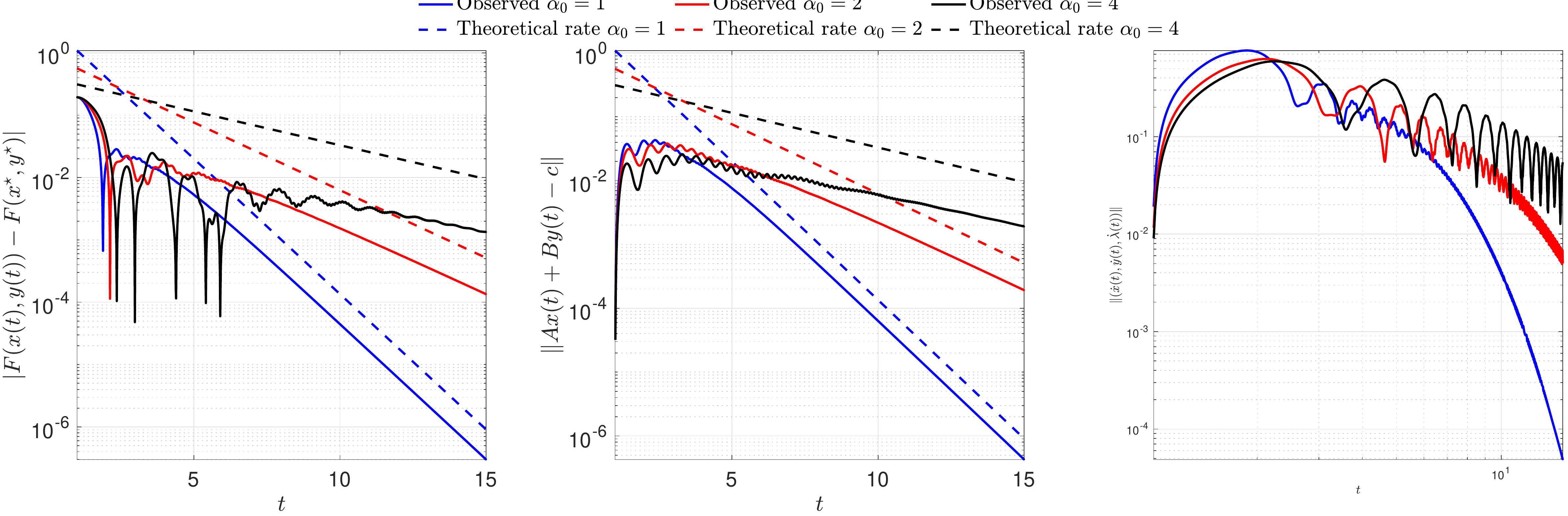}
\caption{$\alpha(t) \equiv \alpha_0$}
\end{subfigure}
\begin{subfigure}{\textwidth}
\includegraphics[width=1\textwidth]{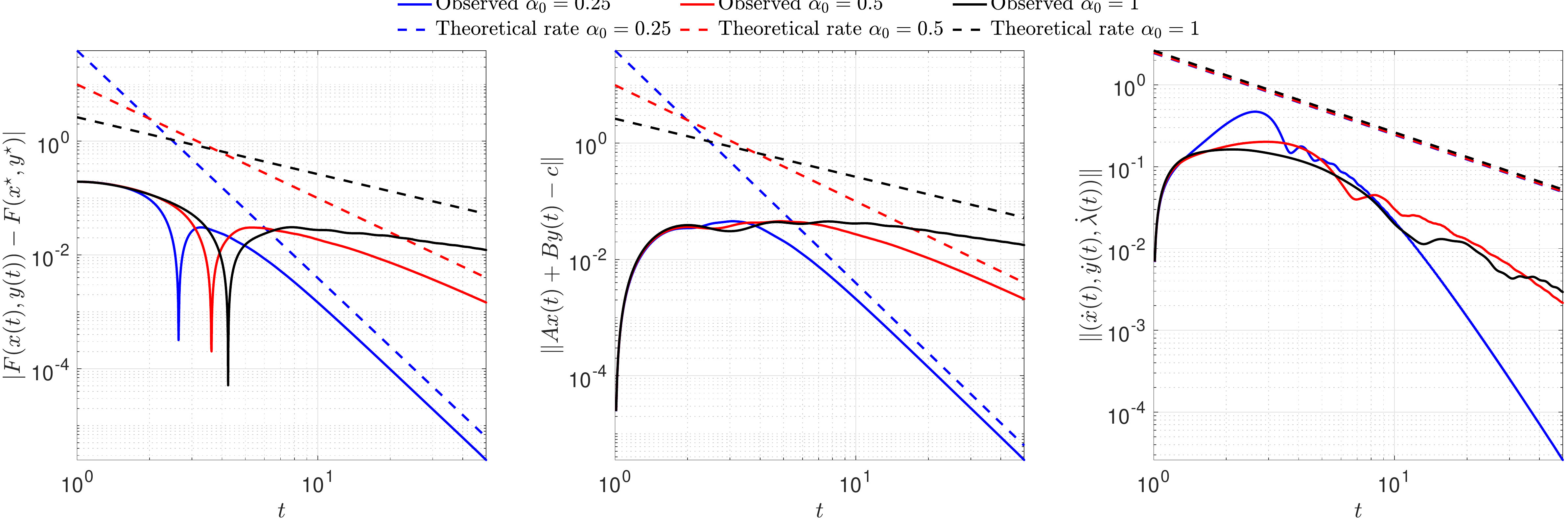}
\caption{$\alpha(t) = \alpha_0 t$}
\end{subfigure}\\
\begin{subfigure}{\textwidth}
\includegraphics[width=1\textwidth]{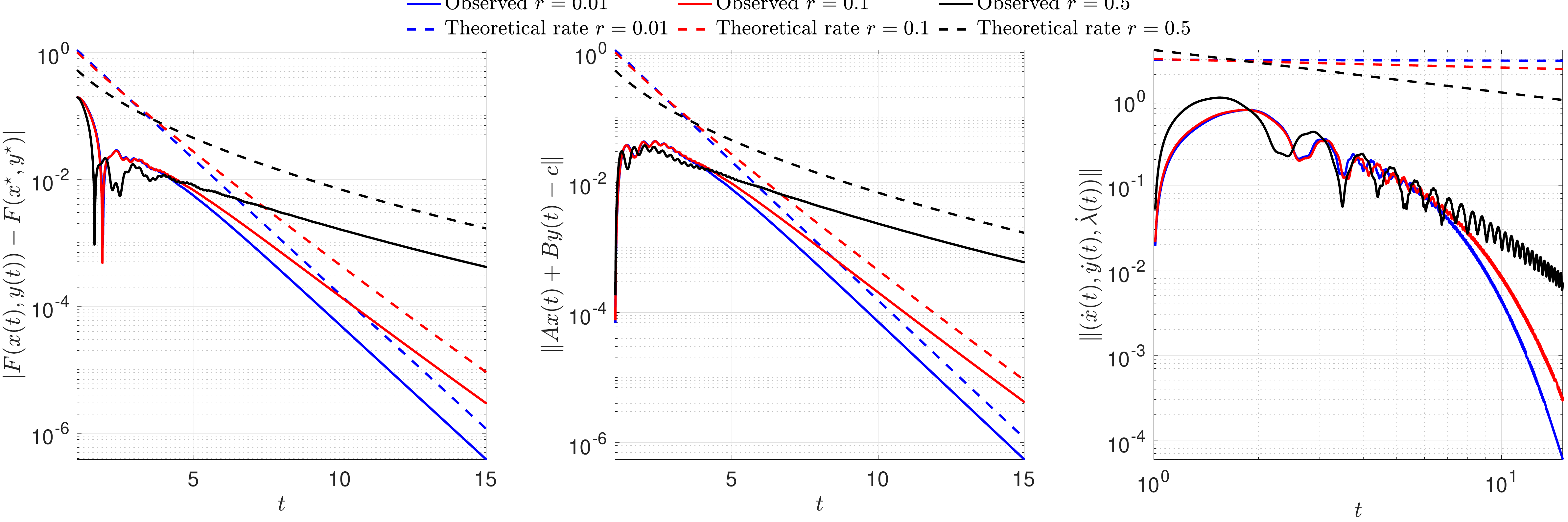}  
\caption{$\alpha(t) = t^r$}
\end{subfigure}

\caption{Experiment on the solely convex objective. Observed (solid) and predicted (dashed) rates on the objective error $\abs{F(x(t),y(t))-F^\star}$ on the left, the feasibility gap $\anorm{Ax(t)+By(t)-c}$ in the middle, and the velocity $\anorm{(\dot {x}(t), \dot {y}(t), \dot{\lambda}(t))}$ on the right.}
\label{fig:trials}    
\end{figure}

\begin{figure}[htbp]
\begin{subfigure}{\textwidth}
\includegraphics[width=1\textwidth]{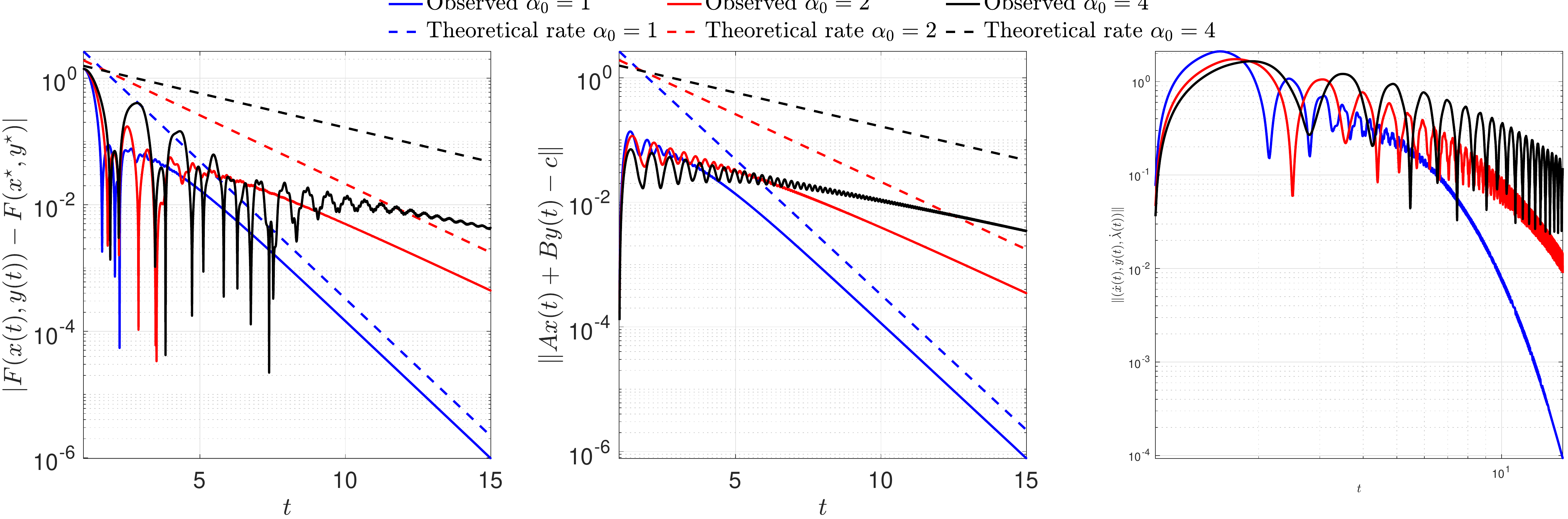}
\caption{$\alpha(t) \equiv \alpha_0$}
\end{subfigure}
\begin{subfigure}{\textwidth}
\includegraphics[width=1\textwidth]{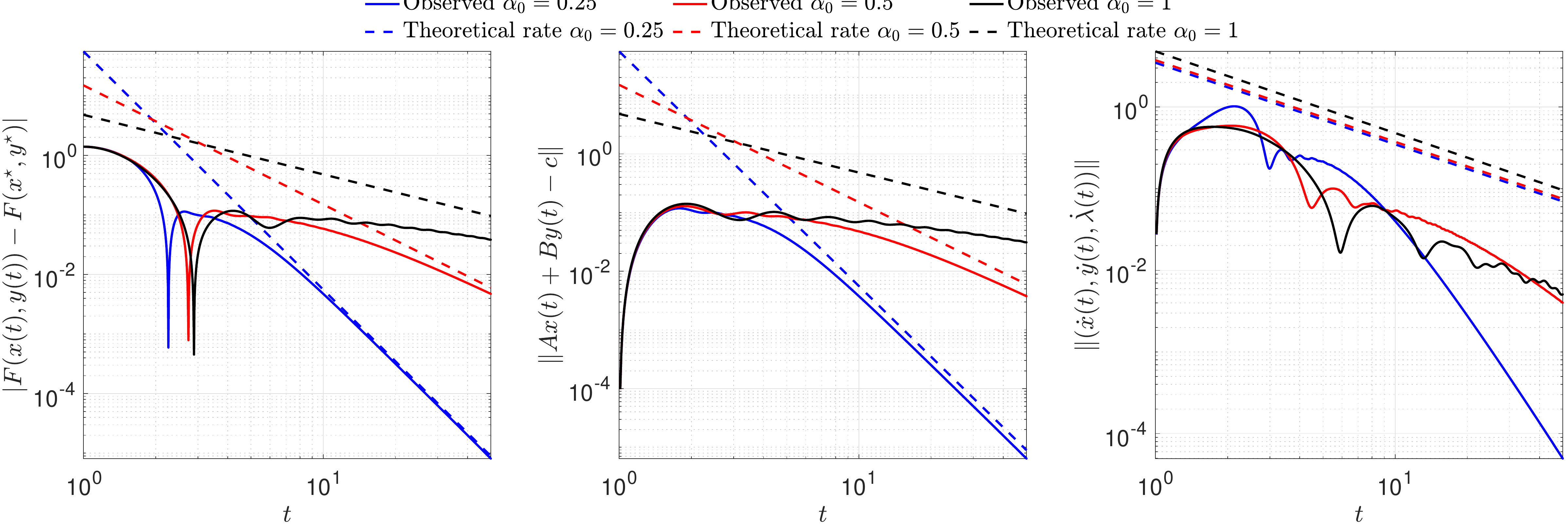}
\caption{$\alpha(t) = \alpha_0 t$}
\end{subfigure}\\
\begin{subfigure}{\textwidth}
\includegraphics[width=1\textwidth]{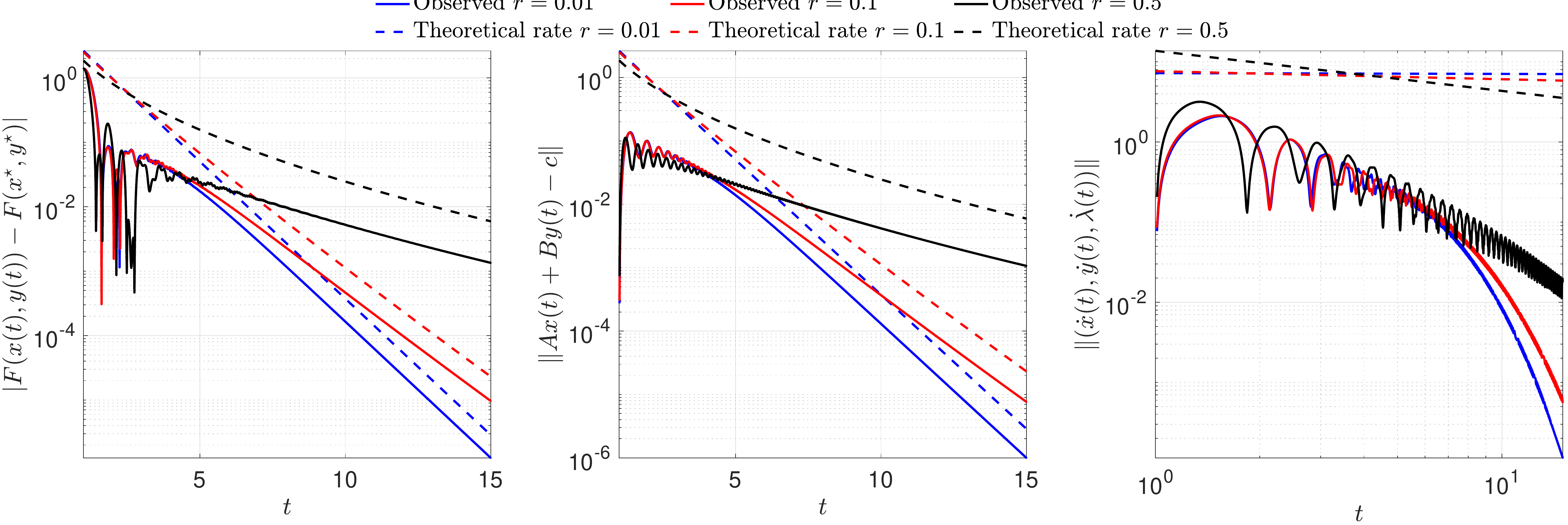}  
\caption{$\alpha(t) = t^r$}
\end{subfigure}

\caption{Experiment on the strongly convex objective. Observed (solid) and predicted (dashed) rates on the objective error $\abs{F(x(t),y(t))-F^\star}$ on the left, the feasibility gap $\anorm{Ax(t)+By(t)-c}$ in the middle, and the velocity $\anorm{(\dot {x}(t), \dot {y}(t), \dot{\lambda}(t))}$ on the right.}
\label{fig:trialsstrong}    
\end{figure}

For the solely convex (resp. strongly convex) objective, Figure~\ref{fig:trials} (resp. Figure~\ref{fig:trialsstrong}) displays the objective error $\abs{F(x(t),y(t))-F^\star}$ on the left, the feasibility gap $\anorm{Ax(t)+By(t)-c}$ in the middle, and the velocity $\anorm{(\dot {x}(t), \dot {y}(t), \dot{\lambda}(t))}$ on the right. In each figure, the first row shows the results for $\alpha(t) \equiv \alpha_0$ with $\alpha_0 \in \bra{1,2,4}$, the second row corresponds to $\alpha(t)=\alpha_0 t$ with $\alpha_0 \in \bra{0.25,0.5,1}$ and the third row to $\alpha(t)=t^r$ with $r \in \bra{0.01,0.1,0.5}$. In all our experiments, we set $\mu=10$ (recall that $\mu$ is the parameter associated with the augmented Lagrangian formulation). All these choices of the parameters comply with the requirements of Propositions~\ref{O-exp}, \ref{O-1/t2} and \ref{alpha-puissance}. The numerical results are in excellent agreement with our theoretical results, where the values, the velocities and the feasibility gap all converge at the predicted rates.

\if
{

\begin{figure}
  \includegraphics[width=1\textwidth]{ADMM-Problem1A_alpha}
  
  \vspace{2mm}
  
 \includegraphics[width=1\textwidth]{ADMM-Problem1B_alpha}
    
  \vspace{2mm}
  
 \includegraphics[width=1\textwidth]{ADMM-Problem1C_alpha}    
    
\end{figure}

\begin{figure}
  
 \includegraphics[width=1\textwidth]{ADMM-Problem2A_alpha}
 
\vspace{2mm}
  
 \includegraphics[width=1\textwidth]{ADMM-Problem2B_alpha}     

  \vspace{2mm}
  
 \includegraphics[width=1\textwidth]{ADMM-Problem2C_alpha}     
       
\caption{\eqref{eq:trials}}
\label{fig:trials}
\end{figure}

}
\fi

\if
{

\noindent \textbf{Example 1}  
Here $ \alpha = 10^{- 0.1} $ and $ \mu = 10^{- 5} $.
In this example, we treat the case with only one variable: 
$$ \min F (x, y) = f (x) + g (y) = \frac12 (x-1)^2 + \frac12y^2  \mbox{ subject to }:  Ax + By = x - y = 1 = c $$
We notice that :

$\bullet$ The convergences of values, constraints and solutions are of the same type, although the values are almost twice as fast as the others.

$\bullet$ From a certain order in time (which is fast) the variations become unchanged.

\medskip

\noindent  \textbf{Example 2}
 Here $ \alpha $ varies and $ \mu = 10^{- 5} $.

This example is similar to Example 1. Note that the variation of $ \alpha $ acts on the three estimates of the values, the constraints and the solutions. More and more $ \alpha $ grows, the variations decrease up to a certain order (here $ \alpha = 10 ^ {- 0.05} $), and then increase until exploding when
 $ \alpha \geq 10^{-2} $.

\medskip

\noindent \textbf{Example 3}
 Here it is similar to Example 2, but with another two-variable function: 
 $$ \min F (x, y) = f (x) + g (y) = \frac12 ( x_1^2 + x_2^2) - \ln (y_1 y_2) $$
  subject to: $ Ax + By = c $ translated by $ x_1 + x_2-y_1 = -1 $ and $ x_1-x_2 + y_1 = 1 $ with $ y_1 , y_2> $ 0.

\medskip

\noindent  \textbf{Example 4}
Similar to Example 3.

\medskip

\noindent  \textbf{Example 5}
 Here we deal with the problem 
 $$ \min F (x, y) = f (x) + g (y) = \frac12 \| x- (1,1)^\top \|^2 ) + \| y
\|^2 $$
 subject to: $ Ax + By = x - y = (1,1)^\top = c $. Note that for two variables the convergences are similar to those in Example 1 with only one variable.

\medskip

\noindent \textbf{ Example 6}
Let us return to the situation considered in Example 5, with a similar problem without constraints $ \min f (x) = \| x- (1,1)^\top \|^2 $. A similar dynamic system provides a faster convergence rate.

 \medskip

Therefore the numerical resolution of the systems associated with problems under constraints is more hampered in time and in estimation than that for those without constraints. This is normal, given that the constraints impose multipliers which make the digital processing more troublesome.

}
\fi

\if
{
\begin{remark}

Let us verify on an elementary situation that indeed under the assumptions of theorems 1 and 2, the coupling term is effective to ensure the convergence of each trajectories towards an equilibrium.
Consider $ f = g = $ 0, and $ A = -B = I $, $ c = 0 $ in which case there is a continuum of solutions which is the entire space. The constraint is equivalent to $ x = y $. The system \eqref{eq:trials} is written
\begin{equation*}
\left\{\begin{array}{lll}
\; \ddot x+\gamma (t) \dot x + b(t) \Big(
\lambda  +   \alpha(t) \dot\lambda 
+ \mu (x-y)   \Big) &=&0\vspace{2mm}\\
 \;   \ddot y+\gamma (t)\dot y - b(t)\Big(
\lambda  +   \alpha(t) \dot\lambda 
+ \mu (x-y)  \Big)  &=&0 \vspace{2mm} \\
\;  \ddot \lambda+\gamma (t)\dot \lambda - b(t)
 \Big( x + \alpha(t)\dot x -(y + \alpha(t)\dot y) \Big)&=&0.
 \end{array}\right.
\end{equation*}
\end{remark}
By adding the first two  equations, and defining $s=x+y$, we get
$$
\ddot{s}+\gamma (t) \dot{s}=0.
$$
So, under the condition $(H_0)$, we get that the limit of  $s(t)=x(t)+y(t)$ exists, as $t\to +\infty$.
Moreover by Theorem 1, we know that the limit of $Ax(t)+By(t)$ is equal to zero, under the assumption $\lim_{t\to +\infty} \alpha(t)^2 \sigma(t)^2 b(t)=+\infty$.  According to $A=-B=I$ this implies that the limit
of $x-y$ is zero. Thus $x(t)+y(t)$ and $x(t)-y(t)$ converge, which implies that $x(t)$ and $y(t)$ converge, with the same limit (since their difference tends to zero).
On the other hand, even in this elementary situation, the convergence of 
$\lambda(t)$ is  a non-trivial question.

}
\fi

\section{Conclusion, perspectives}\label{sec:conclusion}
In this paper, we adopted a dynamical system perspective and we have proposed a second-order inertial system enjoying provably fast convergence rates to solve structured convex optimization problems with an affine constraint. One of the most original aspects of our study is the introduction of a damped inertial dynamic involving several time-dependent parameters with specific properties. They allow to consider a variable viscosity coefficient (possibly vanishing so making the link with the Nesterov accelerated gradient method), as well as variable extrapolation parameters (possibly large) and time scaling.
The analysis of the subtle and intricate interplay between these objects together has been made possible through Lyapunov's analysis.
It would have been quite difficult to undertake such an analysis directly on the algorithmic discrete form. 
On the other hand, as we have now gained a deeper understanding with such a powerful continuous-time framework, we believe this will serve us as a guide to design and analyze a class of inertial ADMM algorithms which can be naturally obtained by appropriate discretization of the dynamics \eqref{eq:trials}. Their full study would go beyond the scope of this paper and will be the subject of future work. Besides, several other open questions remain to be studied, among which, the introduction of geometric damping controlled by the Hessian, and the convergence of the trajectories in the general convex constrained case.


\bibliographystyle{spmpsci}
\bibliography{biblio}

\end{document}